\newtheorem{thm}{Theorem}[section]
\newtheorem{cor}[thm]{Corollary}
\newtheorem{lemma}[thm]{Lemma}
\newtheorem{prop}[thm]{Proposition}
\theoremstyle{definition}
\newtheorem{defn}[thm]{Definition}
\newtheorem{remark}[thm]{Remark}
\newcommand{\Span}{\operatorname{span}}
\newcommand{\bb}[1]{\mathbb{#1}}
\newcommand{\cl}[1]{\mathcal{#1}}
\newcommand{\Ran}{\operatorname{Ran}}
\newcommand{\Inner}[2]{\left\langle {#1}, {#2} \right\rangle}
\newcommand{\inner}[2]{\langle {#1}, {#2} \rangle}
\newcommand{\Ker}{\operatorname{Ker}}
\newcommand{\veq}{\mathrel{\rotatebox{90}{$=$}}}
\numberwithin{equation}{section}
\begin{document}

\title[Tensor Products of Cuntz Operator Systems]{Tensor
  Products of the Operator System Generated by the Cuntz Isometries}


\author{V.I. Paulsen} 
\address{Department of Mathematics, University of Houston; Department of Pure Mathematics and Institute for Quantum Computing, University of Waterloo}
\email{vpaulsen@uwaterloo.ca}
\author{Da Zheng}
\address{Department of Mathematics, University of Houston}
\email{dazheng@math.uh.edu}
\subjclass[2000]{ Primary 46L06, 46L07; Secondary 46L05, 47L25.}
\thanks{This research was supported in part by NSF grant DMS-1101231.}
\date{\today}
\begin{abstract}
We study tensor products and nuclearity-related properties of the operator system $\cl S_n$ generated by the Cuntz isometries. By using the nuclearity of the Cuntz algebra,
we can show that $\cl{S}_n$ is $C^*$-nuclear, and this implies a dual row contraction version of Ando's theorem characterizing operators of numerical radius 1. On the other hand, without using the nuclearity of the Cuntz algebra, we are still able to show directly this Ando type property of dual row contractions and conclude 
that $\cl{S}_n$ is $C^*$-nuclear,  which yields a new proof of the nuclearity of the Cuntz algebras.
 We prove that the dual operator system of $\cl{S}_n$ is completely order isomorphic to an operator subsystem of $M_{n+1}$.
Finally, a lifting result concerning Popescu's joint numerical radius is proved via operator system techniques.
\end{abstract}
\maketitle
 \section{Introduction}
 The operator system generated by Cuntz isometries is studied in \cite{ZHE14}. We let $S_1,\dots,S_n$ be $n$ ($2\leq n<+\infty$) generators of the Cuntz algebra $\cl{O}_n$ and $I$ be the identity operator,  and let $\cl{S}_n$ denote the operator system generated by $S_1,\dots,S_n$, that is, $\cl{S}_n=\Span\{I,S_1,\dots,S_n,S_1^*,\dots,S^*_n\}$. Similarly, we let $S_1,\dots,S_n,\dots$ be the generators of $\cl{O}_\infty$ and set $\cl{S}_\infty=\Span\{I,S_1,\dots,S_n,\dots,S_1^*,\dots,S^*_n,\dots\}$. 

In this paper, we turn our attention to tensor products and nuclearity-related properties of $\cl{S}_n$, which is motivated by the well-known fact that $\cl{O}_n$ ($2\leq n\leq \infty$) is nuclear in the sense that for every unital $C^*$-algebra $\cl{A}$,
\[ \cl{O}_n\otimes_{\min} \cl{A}=\cl{O}_n\otimes_{\max} \cl{A} .\]
Since $\cl{S}_n$ contains all the generators of $\cl{O}_n$ and its $C^*$-envelope coincides with $\cl{O}_n$, it is natural to study tensor properties of $\cl{S}_n$ ($2\leq n\leq \infty$)  in the operator system category. 

Of course, we hope that $\cl{S}_n$ is nuclear in the operator system category. Unfortunately,
according to Definition \ref{defminmaxnuclear}, we can show  that $\cl{S}_n$ is not (min, max)-nuclear by constructing a counter-example. 
However, (min, max)-nuclearity is quite a strong condition for an operator system, as it has been shown  that a finite dimensional operator system is (min, max)-nuclear if and only if it is completely order isomorphic to a $C^*$-algebra if and only if it is injective  \cite[Theorem 6.11]{KPTT13}. So we make a concession and ask whether $\cl{S}_n$ is $C^*$-nuclear (see Definition \ref{defcnulcear}). Fortunately, the answer is affirmative for this case. This fact follows from a refined version of Bunce's dilation theorem for row-contractions \cite[Proposition 1]{BUN84} and the fact that $\cl O_n$ is nuclear. Thus, the operator system $\cl{S}_n$ enjoys many nice properties such as WEP, OSLLP, DCEP, exactness, etc (See \cite{KPTT13}).

On the other hand, it is tempting to show directly that $\cl{S}_n$ is $C^*$-nuclear, that is, without using the nuclearity of $\cl{O}_n$. This is motivated by our result that $\cl{O}_n$ is nuclear if and only if $\cl{S}_n$ is $C^*$-nuclear.   We are able to show the latter directly by using operator system techniques together with the theory of shorted operators. This provides us with a new  proof  of the nuclearity of the Cuntz algebras. Moreover, it motivates us to approach some important properties of the Cuntz algebra via operator system techniques. 
This direct proof of the $C^*$-nuclearity of $\cl{S}_n$ also yields a dual row contraction version of Ando's theorem characterizing operators of numerical radius 1.

Recently, Kavruk \cite{KAV14b} showed that for a finite dimensional operator system, $C^*$-nuclearity passes to its dual operator system, and vice versa. This motivates us to study the dual operator system of $\cl{S}_n$, which we denote by $\cl{S}_n^d$.  We show that $\cl{S}_n^d$ is completely order isomorphic to an operator subsystem of $M_{n+1}$. By Kavruk's result, we know that this operator system is also C*-nuclear. However, we were unable to give a direct proof that this operator subsystem is $C^*$-nuclear, although an operator system in the matrix algebras seems easier to deal with.   
From the general theory of operator system tensor products, we know that $C^*$-nuclearity is stronger than a lifting property, the OSLLP. Since $\cl{S}_n^d$ is $C^*$-nuclear, it has the OSLLP and we use this fact to prove a lifting property for Popescu's joint numerical radius for $n$-tuples of operators.
 

In section 2, we will first show that $\cl{S}_n$ is not (min, max)-nuclear by giving a counter-example. Next, we prove that $O_n \otimes_{\min} \cl A = \cl O_n \otimes_{\max} \cl A$ for a unital $C^*$-algebra $\cl{A}$ if and only if $S_n \otimes_{\min} \cl A = \cl S_n \otimes_{\max} \cl A$. Because $\cl{O}_n$ is nuclear, it follows that $\cl{S}_n$ is $C^*$-nuclear.

In section 3,  we will show that the operator system $\cl{E}_n$ defined in \cite{ZHE14} is
$C^*$-nuclear. We then use this fact together with some operator system methods to prove that for a
unital C*-algebra $\cl A$, the equality $\cl
O_n \otimes_{\min} \cl A = \cl O_n \otimes_{\max} \cl A$
is equivalent to a lifting property that must be met by $\cl A.$
Thus, using the fact that $\cl O_n$ is nuclear, we have that every
C*-algebra enjoys this lifting property. A direct corollary of this result shows a contraction 
version of Ando's theorem. 

In section 4, we will give a direct proof of $S_n \otimes_{\min} \cl A = \cl S_n \otimes_{\max} \cl A$ for a unital $C^*$-algebra $\cl{A}$,  without using the nuclearity of $\cl{O}_n$, by showing that every unital $C^*$-algebra does enjoy the lifting property mentioned in section 3. So we obtain a new proof of the nuclearity of the Cuntz algebra.

In section 5, we will study the dual operator system of $\cl{S}_n$, say $\cl{S}_n^d$. 
We will give characterizations for positive elements in $M_p(\cl{S}_n^d)$ and a necessary and sufficient 
condition for unital completely positive maps from $\cl{S}_n^d$ into a $C^*$-algebras. We will also show that $\cl{S}_n^d$ is not (min, max)-nulcear. By a result of Kavruk (Theorem \ref{nucleartodual}), , $\cl{S}_n^d$ is  $C^*$-nulcear. Moreover, it is interesting to know that $\cl{S}_n^d$ is completely order isomorphic to an operator subsystem in $M_{n+1}$, in contrast to the fact
that $\cl{S}_n$ is completely order isomorphic to the quotient of an operator subsystem in $M_{n+1}$\cite[Theorem 5.1]{ZHE14}.

In section 6, We observe that unital completely positive maps from $\cl{S}_n^d$ to $C^*$-algebras are closely related to the joint numerical radius of $n$-tuples. Then, a lifting property of the joint numerical radius is proved by using the fact that $\cl{S}_n^d$ has the lifting property. 

We shall assume that the reader is familiar with the basics of operator system tensor products (see \cite{KPTT11} or \cite{KAV14}, e.g.). Also, we refer the reader to \cite{KPTT13} for the basics of operator system quotients.

For the rest of this introductory section, we briefly introduce some terminologies and results from the theory of operator system tensor products which will be used throughout this paper.

\begin{defn}
Let $\cl{S}$ and $\cl{T}$ be operators systems. A map $\phi:\cl{S}\to \cl{T}$ is called a  \textbf{complete order isomorphism} if $\phi$ is a unital  linear
isomorphism and both $\phi$ and $\phi^{-1}$ are completely positive, and we say that $\cl{S}$ is completely order isomorphic to $\cl{T}$ if such $\phi$
exists. 
A map $\phi$ is called a \textbf{complete order injection or embedding} if it is a complete order isomorphism onto its range with $\phi(1_{\cl{S}})$ being an
Archimedean order unit. We shall denote this by $\cl{S}\subseteq_{\operatorname{c.o.i}}\cl{T}$.
\end{defn}
Given operator systems $\cl S$ and $\cl T$ and two possibly different
operator system structures $\cl S \otimes_{\alpha} \cl T$ and $\cl S
\otimes_{\beta} \cl T$ on their tensor product, we shall write $\cl S
\otimes_{\alpha} \cl T = \cl S \otimes_{\beta} \cl T$ to mean that the
identity map is a complete order isomorphism.

The tensor products of operator systems we will use in this paper are: min, max, c (See \cite{KPTT11} for their definitions).
The relationship between these tensor products is $\min \leq c\leq \max$, that is, the identity maps $\operatorname{id}:\cl{S}\otimes_{\max}\cl{T} \to\cl{S}\otimes_c\cl{T}$,  $\operatorname{id}:\cl{S}\otimes_c\cl{T} \to\cl{S}\otimes_{\min}\cl{T}$ are completely positive.

\begin{defn}\cite{KPTT11}\label{defminmaxnuclear}
An operator system $\cl{S}$ is called {\bf (min, max)-nuclear} if
$\cl{S}\otimes_{\min}\cl{T}=\cl{S}\otimes_{\max}\cl{T},$ for every
operator system $\cl{T}$.
\end{defn}

\begin{defn}\cite{KPTT11}\label{defcnulcear}
An operator system $\cl{S}$ is called {\bf $C^*$-nuclear} if $\cl{S}\otimes_{\min}\cl{A}=\cl{S}\otimes_{\max}\cl{A}$ for every
unital $C^*$-algebra $\cl{A}$.
\end{defn}
\begin{prop}\cite{KAV14}
An operator system $\cl S$ is $C^*$-nuclear if and
only if $\cl S \otimes_{\min} \cl T = \cl S \otimes_c \cl T$ for every
operator system $\cl T$.
\end{prop}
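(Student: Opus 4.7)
The plan is to prove the equivalence in two directions, leveraging the coherence properties of the tensor products $\min$, $\max$, and $c$. The backward direction is immediate: if $\cl{S} \otimes_{\min} \cl{T} = \cl{S} \otimes_c \cl{T}$ for every operator system $\cl{T}$, then specializing to $\cl{T} = \cl{A}$ a unital $C^*$-algebra and invoking the standard identification $\cl{S} \otimes_c \cl{A} = \cl{S} \otimes_{\max} \cl{A}$ (a consequence of Stinespring dilation and commutant lifting for ucp maps on $C^*$-algebras, as in \cite{KPTT11}) immediately yields $C^*$-nuclearity of $\cl{S}$.

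For the forward direction, assume $\cl{S}$ is $C^*$-nuclear and let $\cl{T}$ be an arbitrary operator system. The idea is to embed $\cl{T}$ into its universal $C^*$-algebra $C^*_u(\cl{T})$ and then show that both $\cl{S} \otimes_{\min} \cl{T}$ and $\cl{S} \otimes_c \cl{T}$ sit as the same operator subsystem of the ambient tensor product. Three ingredients are needed: first, injectivity of $\min$ gives $\cl{S} \otimes_{\min} \cl{T} \subseteq_{\operatorname{c.o.i}} \cl{S} \otimes_{\min} C^*_u(\cl{T})$; second, $C^*$-nuclearity of $\cl{S}$ combined with $\max = c$ on the $C^*$-algebra factor gives $\cl{S} \otimes_{\min} C^*_u(\cl{T}) = \cl{S} \otimes_c C^*_u(\cl{T})$; third, a parallel embedding $\cl{S} \otimes_c \cl{T} \subseteq_{\operatorname{c.o.i}} \cl{S} \otimes_c C^*_u(\cl{T})$ holds. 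Combining these, both left-hand sides are pullbacks of the same positive cone and must therefore coincide.

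The heart of the argument is the third embedding. For this, I would unwind the defining characterization of $c$: positivity of $p \in M_k(\cl{S} \otimes \cl{T})$ in the $c$-topology amounts to $(\phi \cdot \psi)^{(k)}(p) \geq 0$ for every pair of ucp maps $\phi \colon \cl{S} \to B(H)$ and $\psi \colon \cl{T} \to B(H)$ with commuting ranges. Given such a pair, invoke the universal property to extend $\psi$ to a $*$-homomorphism $\tilde\psi \colon C^*_u(\cl{T}) \to B(H)$; because $\tilde\psi$ is a $*$-homomorphism, the commutation $\phi(\cl{S}) \leftrightarrow \psi(\cl{T})$ propagates to commutation of $\phi(\cl{S})$ with the entire image $\tilde\psi(C^*_u(\cl{T}))$, so $(\phi, \tilde\psi)$ is a valid commuting ucp testing pair on $(\cl{S}, C^*_u(\cl{T}))$. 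Conversely, any commuting ucp pair on $(\cl{S}, C^*_u(\cl{T}))$ restricts to one on $(\cl{S}, \cl{T})$, and the two tests give the same value on elements of $\cl{S} \otimes \cl{T}$. This bijective correspondence of testing pairs forces the positive cones to match.

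The main obstacle is precisely this commutation-preservation step: arbitrary ucp extensions (for instance into an injective envelope) need not commute with a prescribed commutant, so the universal $C^*$-algebra---whose extensions are genuine $*$-homomorphisms---is the correct ambient object, as opposed to $I(\cl T)$ or any other injective containing $\cl T$. Once this third embedding is in hand, the rest is a routine diagram chase identifying the min- and $c$-cones as pullbacks.
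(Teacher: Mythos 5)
The paper does not prove this proposition---it is quoted from \cite{KAV14}---but your argument is correct and is essentially the standard proof from that literature: the backward direction via $\cl S \otimes_c \cl A = \cl S \otimes_{\max}\cl A$, and the forward direction by sandwiching both structures inside $\cl S \otimes_{\min} C^*_u(\cl T) = \cl S \otimes_c C^*_u(\cl T)$, using injectivity of $\min$ and the complete order embedding $\cl S \otimes_c \cl T \subseteq_{\operatorname{c.o.i}} \cl S \otimes_c C^*_u(\cl T)$. Your justification of that last embedding (ucp maps on $\cl T$ extend to $*$-homomorphisms on $C^*_u(\cl T)$, and commutation with $\psi(\cl T)$ propagates to the generated $C^*$-algebra since a commutant is a $C^*$-algebra) is exactly the right point and is sound.
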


\begin{prop}\cite{KPTT11}
Let $\cl{A}$ and $\cl{B}$ be unital $C^*$-algebras, then $\cl{A}\otimes_{\min} \cl B\subseteq_{\operatorname{c.o.i}}\cl{A}\otimes_{C^*-\min}\cl{B}$ and
$\cl{A}\otimes_{\max} \cl B \subseteq_{\operatorname{c.o.i}}\cl{A}\otimes_{C^*-\max}\cl{B}$, where the $\otimes_{C^*-\min}$, $\otimes_{C^*-\max}$ denote the
the tensor products in the $C^*$-algebra category. 
\end{prop}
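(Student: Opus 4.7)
The plan is to verify both embeddings by matching how matrix-order positivity is detected in each tensor product. Since the identity map on $\cl A \otimes \cl B$ is linear, unital, and bijective on the algebraic tensor, the content reduces to showing that the positive cones at each matrix level coincide, and that the $C^*$-algebraic completion does not enlarge or shrink them.

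For the minimal case, I would fix faithful unital $*$-representations $\cl A \hookrightarrow B(H)$ and $\cl B \hookrightarrow B(K)$. The $C^*$-minimal (spatial) tensor product $\cl A \otimes_{C^*-\min} \cl B$ is by definition the norm closure of $\cl A \otimes \cl B$ inside $B(H \otimes K)$, so its matrix order at level $p$ is the one inherited from $M_p(B(H \otimes K))$. On the other hand, the operator-system minimal tensor product is characterized precisely by the property that the canonical embedding into $B(H) \otimes_{\min} B(K) = B(H \otimes K)$ is a complete order injection. Hence both tensor products impose the same matrix order on $\cl A \otimes \cl B$, and the identity is a complete order injection.

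For the maximal case, I would compare the universal families of maps that detect positivity. By the defining property of the operator-system max, UCP maps $\phi : \cl A \otimes_{\max} \cl B \to B(L)$ correspond bijectively to pairs of UCP maps $\phi_A : \cl A \to B(L)$, $\phi_B : \cl B \to B(L)$ with commuting ranges, via $\phi(a \otimes b) = \phi_A(a)\phi_B(b)$. Applying Stinespring dilation jointly to such a commuting pair yields commuting $*$-representations $\pi_A, \pi_B$ on a larger Hilbert space $\hat L$ satisfying $\phi_A(a)\phi_B(b) = P\pi_A(a)\pi_B(b)P\big|_L$, and these in turn assemble into a $*$-representation of $\cl A \otimes_{C^*-\max} \cl B$. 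Conversely, every $*$-representation of the $C^*$-max factors through a commuting pair and hence gives a UCP map on the operator-system max. Since matrix positivity in each tensor product is detected by exactly the corresponding class of maps, the two positive cones coincide at every level, giving the desired complete order injection.

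The main obstacle lies in the joint Stinespring step: producing commuting $*$-representations from a pair of commuting UCP maps into $B(L)$ is a nontrivial dilation result, essentially a consequence of Arveson's commutant-lifting machinery, and it is precisely where the $C^*$-algebra structure of $\cl A$ and $\cl B$ (beyond their operator-system structure) is used. Without this step one only obtains the trivial direction, that the operator-system cones inject into the $C^*$-algebraic cones; the reverse inclusion, and hence the complete order embedding, genuinely depends on the $C^*$-algebraic dilation theory.
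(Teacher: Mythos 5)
This proposition is quoted in the paper from \cite{KPTT11} as background and is not proved there, so there is no in-paper argument to compare against; judged on its own, your proposal is a correct reconstruction of the standard proof. For the min case your reduction is exactly right: the operator-system min structure is the one induced by any faithful spatial representation into $B(H\otimes K)$, and the $C^*$-spatial tensor product is the norm closure in the same algebra, so the matrix cones on the algebraic tensor product literally coincide. For the max case you correctly identify both the easy direction (the universal representation of $\cl{A}\otimes_{C^*-\max}\cl{B}$ restricts to a commuting pair of $*$-representations, hence gives a unital completely positive map on the operator-system max, so the operator-system cone injects into the $C^*$-cone) and the crux of the converse: given $u$ in the algebraic tensor product that is positive in $M_p(\cl{A}\otimes_{C^*-\max}\cl{B})$, one must show $\phi_A\cdot\phi_B$ applied to $u$ is positive for every commuting pair of unital completely positive maps, and this requires extending $\phi_A\cdot\phi_B$ to a unital completely positive map on the $C^*$-maximal completion. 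Your attribution of that step to Arveson's commutant lifting (Stinespring $\phi_A$ minimally, push $\phi_B$ into the commutant of the dilation, dilate again) is the standard route, and you are right that this is precisely where the $C^*$-structure, rather than the mere operator-system structure, is used. The only point I would tighten is the phrase ``joint Stinespring'': the two maps are not dilated simultaneously but sequentially, with the commutant lifting theorem mediating between the two dilations; as written it could suggest a one-shot construction that does not exist.
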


\begin{prop}\cite{KPTT11}\label{ceuqalmax}
Let $\cl{A}$ be a unital $C^*$-algebra and $\cl{S}$ be an operator system, then
$\cl{S}\otimes_c\cl{A}=\cl{S}\otimes_{\max}\cl{A}$.
\end{prop}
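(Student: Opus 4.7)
The inequality $c\leq\max$ gives that the identity $\cl{S}\otimes_{\max}\cl{A}\to\cl{S}\otimes_c\cl{A}$ is completely positive, so at every matrix level the max-positive cone is contained in the $c$-positive cone; the substance of the proposition is the reverse inclusion. I would rephrase this dually in terms of UCP maps. A UCP map out of $\cl{S}\otimes_{\max}\cl{A}$ corresponds to a jointly completely positive bilinear map $\theta:\cl{S}\times\cl{A}\to B(H)$ with $\theta(1,1)=I$, whereas a UCP map out of $\cl{S}\otimes_c\cl{A}$ corresponds to a pair of UCP maps $\phi:\cl{S}\to B(H)$, $\psi:\cl{A}\to B(H)$ with commuting ranges, combined through the product $\phi\cdot\psi$. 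Thus the remaining inclusion reduces to showing that every jointly CP $\theta$ can be realized as a compression of some commuting-pair product.

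The key step is therefore a Stinespring-type dilation: for every jointly completely positive bilinear map $\theta:\cl{S}\times\cl{A}\to B(H)$ with $\theta(1,1)=I$, there exist a Hilbert space $K$, an isometry $V:H\to K$, a UCP map $\Phi:\cl{S}\to B(K)$, and a $*$-representation $\pi:\cl{A}\to B(K)$ with $[\Phi(\cl{S}),\pi(\cl{A})]=0$ and $\theta(s,a)=V^*\Phi(s)\pi(a)V$ for all $s,a$. Granted this, for any $c$-positive $u\in M_n(\cl{S}\otimes\cl{A})$ the element $(\Phi\cdot\pi)^{(n)}(u)$ is positive in $M_n(B(K))$, and its compression by $V$ equals $\theta^{(n)}(u)$; since $\theta$ was arbitrary, this forces $u$ to be max-positive.

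To produce the dilation I would first apply Stinespring to the UCP restriction $\psi(a):=\theta(1,a)$ on the $C^*$-algebra $\cl{A}$, obtaining a $*$-representation $\pi:\cl{A}\to B(K)$ and isometry $V:H\to K$ with $\psi(a)=V^*\pi(a)V$ and $K=\overline{\pi(\cl{A})VH}$. On the dense subspace $\pi(\cl{A})VH$ I would define $\Phi(s)$ through the sesquilinear form
\[
\Bigl\langle\Phi(s)\sum_j\pi(a_j)Vh_j,\;\sum_i\pi(b_i)Vk_i\Bigr\rangle
=\sum_{i,j}\langle\theta(s,b_i^*a_j)h_j,k_i\rangle,
\]
using the joint complete positivity of $\theta$ applied to the Gram matrix $[b_i^*a_j]$ to verify well-definedness, boundedness, and the UCP property of $\Phi$. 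The commutation $\Phi(s)\pi(a)=\pi(a)\Phi(s)$ then falls out of the defining formula after the index shift $a_j\mapsto aa_j$.

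The main obstacle is this dilation lemma; it is an operator-system-valued analogue of Stinespring's theorem, and the bookkeeping establishing that $\Phi$ lands in $\pi(\cl{A})'$ while remaining UCP is exactly where the joint complete positivity of $\theta$ and the $C^*$-algebra structure of $\cl{A}$ (needed so that $\psi$ dilates to a $*$-representation) are both used in an essential way.
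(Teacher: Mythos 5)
This proposition is quoted in the paper without proof (it is cited from [KPTT11]), so there is nothing internal to compare against; your argument is correct and is essentially the standard proof of that result in the cited reference, which likewise reduces the statement to a Stinespring-type dilation of a jointly completely positive bilinear map into a commuting pair $(\Phi,\pi)$ with $\pi$ a $*$-representation of $\cl A$. The construction of $\Phi$ on $\overline{\pi(\cl A)VH}$ via the sesquilinear form, with well-definedness and boundedness coming from $0\le s\le\|s\|1$ and joint complete positivity applied to the Gram matrix $[b_i^*a_j]$, is exactly where the $C^*$-structure of $\cl A$ enters, as you say.
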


\begin{prop}\cite{KPTT11}[Injectivity of the min tensor product]
The min tensor product is injective in the sense that for every choices of four operator systems $\cl{S}$ and $\cl{T}$, $\cl{S}_1$, $\cl{T}_1$ with inclusions $\cl{S}\subseteq_{\operatorname{c.o.i}}\cl{S}_1$ and 
$\cl{T}\subseteq_{\operatorname{c.o.i}}\cl{T}_1$, we have that 
\[ \cl{S}\otimes_{\min}\cl{T}\subseteq_{\operatorname{c.o.i}}
\cl{S}_1\otimes_{\min}\cl{T}_1  .\]
\end{prop}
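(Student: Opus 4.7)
The plan is to apply the characterization of the min operator system tensor product via matrix-valued UCP maps, combined with Arveson's extension theorem. Recall that an element $u \in M_n(\cl{S} \otimes \cl{T})$ is positive in the min structure precisely when $(\phi \otimes \psi)^{(n)}(u) \geq 0$ in $M_n(M_k \otimes M_l) = M_{nkl}$ for every pair of UCP maps $\phi : \cl{S} \to M_k$ and $\psi : \cl{T} \to M_l$ and every $k,l \geq 1$. The units of $\cl{S} \otimes_{\min} \cl{T}$ and $\cl{S}_1 \otimes_{\min} \cl{T}_1$ coincide, so it suffices to verify that the natural inclusion $\iota : \cl{S} \otimes \cl{T} \hookrightarrow \cl{S}_1 \otimes \cl{T}_1$ is a complete order embedding between the corresponding min structures.

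That $\iota$ is completely positive is immediate from functoriality: restricting any pair of UCP maps $\phi_1 : \cl{S}_1 \to M_k$ and $\psi_1 : \cl{T}_1 \to M_l$ to the subsystems $\cl{S}$ and $\cl{T}$ again yields UCP maps, so positivity of $u$ under every test pair on the smaller side forces positivity of $\iota^{(n)}(u)$ under every test pair on the larger side.

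The heart of the argument is the converse. Suppose $u \in M_n(\cl{S} \otimes \cl{T})$ satisfies $\iota^{(n)}(u) \geq 0$ in $M_n(\cl{S}_1 \otimes_{\min} \cl{T}_1)$. Given arbitrary UCP maps $\phi : \cl{S} \to M_k$ and $\psi : \cl{T} \to M_l$, I would invoke Arveson's extension theorem --- applicable because $M_k$ and $M_l$ are injective --- to the complete order embeddings $\cl{S} \subseteq_{\operatorname{c.o.i}} \cl{S}_1$ and $\cl{T} \subseteq_{\operatorname{c.o.i}} \cl{T}_1$, obtaining UCP extensions $\tilde\phi : \cl{S}_1 \to M_k$ and $\tilde\psi : \cl{T}_1 \to M_l$. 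Positivity of $\iota^{(n)}(u)$ in the larger min structure gives $(\tilde\phi \otimes \tilde\psi)^{(n)}(\iota^{(n)}(u)) \geq 0$, and since $u$ already lies in $M_n(\cl{S} \otimes \cl{T})$, this value coincides with $(\phi \otimes \psi)^{(n)}(u)$. As $\phi, \psi, k, l$ were arbitrary, $u$ is positive in $M_n(\cl{S} \otimes_{\min} \cl{T})$.

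The only nontrivial ingredient is Arveson's extension theorem, and it is precisely this injectivity of matrix algebras that makes min the injective operator system tensor product. Archimedeanity of the unit transfers automatically since the two structures share the same unit and the complete order embedding has already been established at every matrix level; no separate argument is needed.
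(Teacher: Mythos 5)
Your proposal is correct: the paper states this proposition without proof, citing \cite{KPTT11}, and your argument is essentially the standard one given there --- characterize min-positivity by testing against all pairs of UCP maps into matrix algebras, get complete positivity of the inclusion by restricting test maps, and get the converse by extending test maps via Arveson's extension theorem (using injectivity of $M_k$). The only point worth making explicit is that the restrictions $\phi_1|_{\cl S}$ and $\psi_1|_{\cl T}$ are UCP precisely because the inclusions are complete order embeddings sending unit to unit, and that the Archimedean property of $1_{\cl S}\otimes 1_{\cl T}$ is inherited from the larger system once the cones are identified; you note both, so nothing is missing.
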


\begin{prop}\cite{FP12}[Projectivity of the max tensor product]\label{completequotientmapmaxtensor}
 The max tensor product is projective in the following sense: Let $\cl{S}$, $\cl{T}$, $\cl{R}$ be operator systems and suppose $\psi:\cl{S}\to \cl{R}$ is a complete quotient map, then  the map
 $\psi\otimes  \operatorname{id}_{\cl{T}}:\cl{S}\otimes_{\max} \cl{T}\to \cl{R}\otimes_{\max}\cl{T}$ is also a complete quotient map.
\end{prop}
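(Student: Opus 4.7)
The plan is to verify the lifting criterion for complete quotient maps directly: first observe that $\psi \otimes \operatorname{id}_{\cl{T}}$ is unital, surjective, and completely positive — these are formal consequences of the functoriality of the max tensor product, the surjectivity of $\psi$, and the fact that $\psi$ is unital ucp — so the content reduces to showing that for every $p\geq 1$, every $u \in M_p(\cl{R}\otimes_{\max}\cl{T})^+$, and every $\epsilon > 0$, there exists $v \in M_p(\cl{S}\otimes_{\max}\cl{T})$ with $(\psi\otimes\operatorname{id})_p(v) = u$ and $v + \epsilon I_p \in M_p(\cl{S}\otimes_{\max}\cl{T})^+$. This lifting criterion is equivalent to $(\psi\otimes\operatorname{id})$ being a complete quotient map because the induced map on the operator system quotient will then be unital, bijective, and completely order preserving in both directions.

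To execute the lifting, I would unwind $u$ via the concrete description of the max cones. Fixing $\epsilon_1 \in (0,\epsilon)$, Archimedeanization yields a scalar matrix $\alpha$ and positives $r \in M_n(\cl{R})^+$, $t \in M_m(\cl{T})^+$ with $u + \epsilon_1 I_p = \alpha(r\otimes t)\alpha^*$. Set $w := \alpha(I_n \otimes t)\alpha^* \in M_p(\cl{T})^+$ and choose $\eta > 0$ with $\eta\|w\| \leq \epsilon - \epsilon_1$. Now invoke the complete quotient property of $\psi$ at matrix level $n$ to obtain $s \in M_n(\cl{S})$ with $\psi_n(s) = r$ and $s + \eta I_n \in M_n(\cl{S})^+$. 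Then $\alpha\bigl((s + \eta I_n)\otimes t\bigr)\alpha^* = \alpha(s\otimes t)\alpha^* + \eta w$ lies in $D_p^{\max}(\cl{S},\cl{T})$, hence is positive in the max tensor product. Setting $v := \alpha(s\otimes t)\alpha^* - \epsilon_1 I_p$ gives $(\psi\otimes\operatorname{id})_p(v) = u$, while
\[ v + \epsilon I_p \;=\; \bigl(\alpha(s\otimes t)\alpha^* + \eta w\bigr) \;+\; \bigl((\epsilon - \epsilon_1)I_p - \eta w\bigr), \]
where the first bracket is positive by construction and the second bracket is positive in $M_p(\cl{T})$ (hence in $M_p(\cl{S}\otimes_{\max}\cl{T})$) by the choice of $\eta$.

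The main subtlety is the order of quantifiers: $\eta$ must be selected after $\alpha$ and $t$ are fixed, since the admissible size of $\eta$ is governed by $\|w\| = \|\alpha(I_n\otimes t)\alpha^*\|$, so one must peel off the $\epsilon_1$-perturbation from the max cone before invoking the complete quotient property of $\psi$. A secondary point, which I would flag but not labor, is the transfer of the scalar inequality $w \leq \|w\| I_p$ from $M_p(\cl{T})$ into $M_p(\cl{S}\otimes_{\max}\cl{T})$; this follows because $t\mapsto 1_{\cl{S}}\otimes t$ is completely positive into the max tensor product (immediate from the definition of $D^{\max}$), so it preserves $\|w\|I_p - w \in M_p(\cl{T})^+$. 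No deeper dilation-theoretic input is required, which is what makes projectivity of $\otimes_{\max}$ a purely cone-theoretic statement, in contrast to the more delicate injectivity of $\otimes_{\min}$.
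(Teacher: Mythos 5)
The paper states this proposition as a quoted background result from [FP12] and supplies no proof of its own, so there is nothing internal to compare against. Your argument is correct and is essentially the standard cone-chasing proof from the cited reference: peel off an $\epsilon_1$-perturbation to land in $D_p^{\max}$, lift the $\cl S$-leg with $\eta$-slack using the complete quotient property of $\psi$, and absorb the error $\eta\,(1_{\cl S}\otimes w)$ into the remaining $(\epsilon-\epsilon_1)$ of Archimedean room; the two subtleties you flag (choosing $\eta$ after $\alpha$ and $t$ are fixed, and pushing $\|w\|I_p - w \geq 0$ forward along $t \mapsto 1_{\cl S}\otimes t$) are exactly the right ones.
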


Henceforth, unless specified, we always assume $2\leq n<\infty$.

\section{Tensor Products and $C^*$-nuclearity of $\cl{S}_n$}
We begin this section with the following proposition which shows that $\cl{S}_n$ is not (min, max)-nuclear.
\begin{prop}
$\cl{S}_n$ is not (min, max)-nuclear, for $2\leq n\leq \infty$.
\end{prop}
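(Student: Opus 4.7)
The plan is to invoke \cite[Theorem 6.11]{KPTT13}, which says that for a finite-dimensional operator system, $(\min,\max)$-nuclearity is equivalent to injectivity, and to show directly that $\cl{S}_n$ is not injective for $2 \le n < \infty$. The case $n = \infty$ will then be reduced to the finite case via a Fock-space compression.

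For $2 \le n < \infty$, suppose toward a contradiction that a UCP projection $E \colon B(H) \twoheadrightarrow \cl{S}_n$ exists, with $\cl{S}_n \subseteq \cl{O}_n \subseteq B(H)$.  Since $E$ restricts to the identity on $\cl{S}_n$, we have
\[
E(S_i^*S_i) \;=\; E(I) \;=\; I \;=\; E(S_i)^*E(S_i),
\]
and the equality case of the operator Kadison--Schwarz inequality forces $E(bS_i) = E(b)\,S_i$ for every $b \in B(H)$.  Taking $b = S_j \in \cl{S}_n$ yields $S_jS_i = E(S_jS_i) \in \cl{S}_n$, which is impossible: on the full Fock representation, $S_1S_1$ prepends two letters to every basis vector, while every element of $\Span\{I,S_k,S_k^*\}$ shifts any basis vector by at most one letter.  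Hence $\cl{S}_n$ is not injective.

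For $n = \infty$, the cited finite-dimensional characterization is unavailable, so for each finite $m \ge 2$ I would construct a UCP retraction $E_m \colon \cl{S}_\infty \to \cl{S}_m$ splitting the natural embedding $\iota \colon \cl{S}_m \hookrightarrow \cl{S}_\infty$: $E_m$ arises by compressing the full Fock-space representation of $\cl{O}_\infty$ onto the Fock subspace on the first $m$ letters, which sends $S_i^{(\infty)} \mapsto S_i^{(m)}$ for $i \le m$ and $S_i^{(\infty)} \mapsto 0$ for $i > m$.  Combining the injectivity of $\otimes_{\min}$ and the functoriality of $\otimes_{\max}$ for UCP maps, a routine diagram chase then shows that $(\min,\max)$-nuclearity of $\cl{S}_\infty$ would descend to $\cl{S}_m$, contradicting the finite case.

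The main obstacle is verifying that $\iota$ is actually a complete order injection (not merely a UCP map), which requires Popescu's row-contraction dilation theorem to match the matricial state spaces of $\cl{S}_m$ and its copy inside $\cl{S}_\infty$.
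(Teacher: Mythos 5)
Your argument is correct, but it takes a genuinely different route from the paper's. The paper reduces everything to the one\nobreakdash-variable case: it quotes \cite[Theorem 3.7]{FKPT14} that $\mathfrak{S}_1\otimes_{\min}\mathfrak{S}_1\neq\mathfrak{S}_1\otimes_{\max}\mathfrak{S}_1$ for the operator system of a universal unitary, identifies $\mathfrak{S}_1$ with $\cl{S}_1$, and then uses the UCP retractions of \cite[Corollaries 3.4, 3.5]{ZHE14} to push a hypothetical equality $\cl{S}_n\otimes_{\min}\cl{S}_n=\cl{S}_n\otimes_{\max}\cl{S}_n$ down to $\cl{S}_1$, a contradiction; this handles $2\le n\le\infty$ uniformly but leans on two external results. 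You instead attack injectivity head-on via \cite[Theorem 6.11]{KPTT13}: the multiplicative-domain (Choi) argument showing that a UCP projection onto $\cl{S}_n$ would force $S_jS_i\in\cl{S}_n$ is clean, self-contained, and arguably more illuminating about \emph{why} $\cl{S}_n$ fails to be nuclear, though it only applies directly for $n<\infty$ and so requires your separate compression/retraction step for $n=\infty$ --- which is essentially the same descent device the paper uses for all $n\ge 2$. Two small points. First, your justification that $S_jS_i\notin\Span\{I,S_k,S_k^*\}$ is phrased on the Fock space, i.e.\ for the Toeplitz--Cuntz isometries, while the putative projection lives over $\cl{O}_n$; either invoke the linear independence of the words $S_\mu S_\nu^*$ in $\cl{O}_n$ (via the gauge-invariant expectation) or use the canonical identification $\cl{T}_n=\cl{S}_n$ from \cite[Corollary 3.3]{ZHE14} that the paper itself uses. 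Second, the ``main obstacle'' you flag at the end is weaker than you fear: for the descent of $(\min,\max)$-nuclearity you only need $\iota$ and $E_m$ to be UCP with $E_m\circ\iota=\operatorname{id}$, since both $\otimes_{\min}$ and $\otimes_{\max}$ are functorial for UCP maps; you do not need $\iota$ to be a complete order injection, and the UCP-ness of $\iota$ follows from the universal property of $\cl{S}_m$ for row contractions already recorded in \cite{ZHE14}.
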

\begin{proof}
It is known in that the operator system $\mathfrak{S}_1$ generated by a universal unitary is not (min, max)-nuclear because of the following \cite[Theorem 3.7]{FKPT14}:
\[ \mathfrak{S}_1\otimes_{\min}\mathfrak{S}_1\neq \mathfrak{S}_1\otimes_{\max}\mathfrak{S}_1. \]
On the other hand, we have that $\cl{S}_1=\mathfrak{S}_1$ \cite[Corollary 3.3]{ZHE14}, so we know that
\[ \cl{S}_1\otimes_{\min}\cl{S}_1\neq \cl{S}_1\otimes_{\max}\cl{S}_1.\]
Now, for $n\geq 2$, if $\cl{S}_n\otimes_{\min}\cl{S}_n=\cl{S}_n\otimes_{\max}\cl{S}_n$,
then \cite[Corollary 3.4, 3.5]{ZHE14} together imply that
\[ \cl{S}_1\otimes_{\min}\cl{S}_1=\cl{S}_1\otimes_{\max}\cl{S}_1, \]
which is a contradiction. Thus, $\cl{S}_n$ is not (min, max)-nuclear.
\end{proof}


\begin{lemma}\label{operatorsystemimpliesnuclearity}
For $2\leq n\leq \infty$, we assume that $\hat{\cl{S}}\subseteq \cl{O}_n$ is an operator system containing $\cl{S}_n$ and $\cl{A}$ be a $C^*$-algebra. If we have that
\[  \hat{\cl{S}}\otimes_{\min} \cl{A}=\hat{\cl{S}}\otimes_{\max} \cl{A},\]
then
\[ \cl{O}_n\otimes_{\min} \cl{A}=\cl{O}_n\otimes_{\max} \cl{A}.\]
\end{lemma}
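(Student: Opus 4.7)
The plan is to show that the canonical surjection $q:\cl{O}_n\otimes_{\max}\cl{A}\to\cl{O}_n\otimes_{\min}\cl{A}$ is injective, which is exactly the conclusion sought. By injectivity of the min tensor product applied to the inclusion $\hat{\cl{S}}\subseteq_{\operatorname{c.o.i}}\cl{O}_n$, we first obtain a complete order embedding $\hat{\cl{S}}\otimes_{\min}\cl{A}\subseteq_{\operatorname{c.o.i}}\cl{O}_n\otimes_{\min}\cl{A}$. Now fix a faithful $*$-representation $\pi:\cl{O}_n\otimes_{\max}\cl{A}\to\cl{B}(\cl{H})$; by the universal property of the $C^*$-algebraic max tensor product, $\pi$ decomposes as a pair of commuting $*$-representations $\pi_1:\cl{O}_n\to\cl{B}(\cl{H})$ and $\pi_2:\cl{A}\to\cl{B}(\cl{H})$ with $\pi(x\otimes a)=\pi_1(x)\pi_2(a)$.

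Next, I would restrict $\pi_1$ to $\hat{\cl{S}}$ to get a u.c.p.\ map whose range commutes with $\pi_2(\cl{A})$, so the pair induces a u.c.p.\ map $\phi:\hat{\cl{S}}\otimes_{\max}\cl{A}\to\cl{B}(\cl{H})$. The hypothesis $\hat{\cl{S}}\otimes_{\min}\cl{A}=\hat{\cl{S}}\otimes_{\max}\cl{A}$ lets us view $\phi$ as u.c.p.\ on $\hat{\cl{S}}\otimes_{\min}\cl{A}$, and Arveson's extension theorem (applied to the above complete order inclusion, using the injectivity of $\cl{B}(\cl{H})$) produces a u.c.p.\ extension $\tilde{\phi}:\cl{O}_n\otimes_{\min}\cl{A}\to\cl{B}(\cl{H})$.

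The decisive step is a multiplicative domain argument. For each $i$, $S_i\in\cl{S}_n\subseteq\hat{\cl{S}}$, so $\tilde{\phi}(S_i\otimes 1)=\pi_1(S_i)$, and from $S_i^*S_i=I$,
\[ \tilde{\phi}(S_i\otimes 1)^*\tilde{\phi}(S_i\otimes 1)=\pi_1(S_i)^*\pi_1(S_i)=I=\tilde{\phi}\bigl((S_i\otimes 1)^*(S_i\otimes 1)\bigr), \]
so the Schwarz inequality places every $S_i\otimes 1$ in the multiplicative domain of $\tilde{\phi}$. An analogous computation, using that $\pi_2$ is a $*$-homomorphism, does the same for every $1\otimes a$. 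Since $\{S_i\otimes 1\}\cup\{1\otimes a:a\in\cl{A}\}$ generates $\cl{O}_n\otimes_{\min}\cl{A}$ as a $C^*$-algebra, $\tilde{\phi}$ is a $*$-homomorphism; then $\tilde{\phi}\circ q$ agrees with $\pi$ on a generating set, so $\tilde{\phi}\circ q=\pi$, and the faithfulness of $\pi$ forces $q$ to be injective. The main subtlety I anticipate is confirming that the argument depends only on $\hat{\cl{S}}$ containing each individual Cuntz isometry (which is ensured by $\cl{S}_n\subseteq\hat{\cl{S}}$), so the same proof works uniformly for $2\leq n\leq\infty$ since only the relation $S_i^*S_i=I$ is used.
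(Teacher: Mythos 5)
Your architecture is essentially the paper's own proof in different clothing: extend a u.c.p.\ map from $\hat{\cl{S}}\otimes_{\min}\cl{A}$ to all of $\cl{O}_n\otimes_{\min}\cl{A}$ by Arveson, then show the extension is multiplicative (the paper runs this via the Stinespring dilation and the vanishing of the off-diagonal corners, which is the same argument as a multiplicative domain computation). However, there is a genuine gap at exactly the decisive step. Verifying $\tilde\phi\bigl((S_i\otimes 1)^*(S_i\otimes 1)\bigr)=\tilde\phi(S_i\otimes 1)^*\tilde\phi(S_i\otimes 1)$ only places $S_i\otimes 1$ in the \emph{right} multiplicative domain of $\tilde\phi$, i.e.\ it gives $\tilde\phi(b(S_i\otimes 1))=\tilde\phi(b)\tilde\phi(S_i\otimes 1)$ for all $b$. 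Membership in the multiplicative domain --- the $C^*$-subalgebra on which $\tilde\phi$ restricts to a $*$-homomorphism --- also requires the other Schwarz equality $\tilde\phi(S_iS_i^*\otimes 1)=\tilde\phi(S_i\otimes 1)\tilde\phi(S_i\otimes 1)^*$, and this is not automatic: $S_iS_i^*$ does not lie in $\cl{S}_n$ (and need not lie in $\hat{\cl{S}}$), so $\tilde\phi(S_iS_i^*\otimes 1)$ is whatever the Arveson extension happens to assign, and Schwarz only gives $\tilde\phi(S_iS_i^*\otimes 1)\geq \pi_1(S_iS_i^*)$. With only the one-sided condition you can peel $S_i\otimes 1$ off the right and $S_i^*\otimes 1$ off the left of a word, which is not enough to evaluate $\tilde\phi$ on a general monomial $S_\mu S_\nu^*\otimes a$, so multiplicativity of $\tilde\phi$ does not follow. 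Your closing remark that ``only the relation $S_i^*S_i=I$ is used'' is precisely where the proof breaks.

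The repair, for $2\leq n<\infty$, is exactly the extra input the paper uses to kill the corners $C_i$ of its Stinespring dilation: the Cuntz relation $\sum_{i=1}^n S_iS_i^*=I$ gives
\[ \sum_{i=1}^n\tilde\phi(S_iS_i^*\otimes 1)=\tilde\phi(I\otimes 1)=I=\pi_1\Bigl(\sum_{i=1}^n S_iS_i^*\Bigr)=\sum_{i=1}^n\tilde\phi(S_i\otimes 1)\tilde\phi(S_i\otimes 1)^*, \]
while each summand on the left dominates the corresponding summand on the right by Schwarz; hence equality holds term by term and each $S_i\otimes 1$ really is in the (two-sided) multiplicative domain. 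Once that is in place the rest of your argument (generators in the multiplicative domain, $\tilde\phi\circ q=\pi$ on generators, faithfulness of $\pi$) goes through. Note, though, that this repair uses $\sum_i S_iS_i^*=I$, which fails in $\cl{O}_\infty$, so your claim that the proof works uniformly for $2\leq n\leq\infty$ is not justified as written; the infinite case needs separate treatment (the paper ultimately handles $\cl{O}_\infty$ by reducing positivity questions in $\cl{S}_\infty\otimes\cl{A}$ to finitely many generators).
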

\begin{proof}
We first represent $\cl{O}_n\otimes_{\max} \cl{A}$ on some Hilbert space $\cl{H}$. By the definition of the max tensor product of operator systems, the canonical embedding map from $\hat{\cl{S}}\otimes_{\max} \cl{A}$ into $\cl{O}_n\otimes_{\max} \cl{A}$ is completely positive. Thus we have a completely positive map $\rho:\hat{\cl{S}}\otimes_{\min} \cl{A}\to B(\cl{H})$, such that $\rho(a\otimes b)=a\otimes b$ for each $a\in \hat{\cl{S}}$ and $b\in \cl{A}$.

The injectivity of the min tensor product of operator systems implies that $\hat{\cl{S}}\otimes_{\min} \cl{A}\subseteq_{\operatorname{c.o.i}} \cl{O}_n\otimes_{\min} \cl{A}$, and we can extend $\rho$ to a completely positive map $\tilde{\rho}:\cl{O}_n\otimes_{\min} \cl{A}\to B(\cl{H})$ by  the Arveson's extension theorem.

Next, we use the Stinespring's dilation theorem and obtain a unital $*$-homomorphism $\gamma:\cl{O}_n\otimes_{\min} \cl{A}\to B(\cl{K})$ and $V:\cl{H}\to \cl{K}$ for some Hilbert space $\cl{K}$ such that
\[ \tilde{\rho}(a)=V^*\gamma(a)V, \text{ for each }a \in \cl{O}_n\otimes_{\min} \cl{A} . \]
The map $\rho$ being unital implies that $\tilde{\rho}$ is unital and hence $V^*V=I_{\cl{H}}$, i.e. $V$ is an isometry. By identifying $\cl{H}$ with $V\cl{H}$, we can assume that $\cl{H}\subseteq \cl{K}$.

Now, if we decompose $\cl{K}=\cl{H}+\cl{H}^{\perp}$, then $\tilde{\rho}$ is the $1,1$ corner of $\gamma$. Further, we have that
\[ \gamma(S_i\otimes 1_{\cl{A}})=\begin{pmatrix}
\tilde{\rho}(S_i\otimes 1_{\cl{A}})&C_i \\
B_i&D_i
\end{pmatrix}, \quad \text{for every } i\in \{1,\dots,n\}\]
Here, $B_i\in B(\cl{H},\cl{H}^{\perp})$, $C_i\in B(\cl{H}^{\perp},\cl{H})$, $D_i\in B(\cl{H}^{\perp},\cl{H}^\perp)$. Since $S_i\otimes 1_{\cl{A}}$ is
an isometry, it follows that $\gamma(S_i\otimes 1_{\cl{A}})$ and $\tilde{\rho}(S_i\otimes 1_{\cl{A}})$ are isometries, and we immediately have that $B_i=0$.

Moreover, the condition that $\sum_{i=1}^nS_iS_i^*=I_{\cl{H}}$ ($n$ finite) or $\sum_{i=1}^kS_iS_i^*\leq I_{\cl{H}}$ for every $1\leq k<\infty$ ($n$ infinite) implies that
\[ \sum_{i=1}^n\gamma(S_i\otimes 1_{\cl{A}})\gamma(S_i\otimes 1_{\cl{A}})^*=\gamma(\sum_{i=1}^nS_iS_i^*\otimes 1_{\cl{A}})=1_{\cl{K}}  ,\]
or 
\[ \sum_{i=1}^k\gamma(S_i\otimes 1_{\cl{A}})\gamma(S_i\otimes 1_{\cl{A}})^*=\gamma(\sum_{i=1}^kS_iS_i^*\otimes 1_{\cl{A}})\leq1_{\cl{K}}, \quad \text{for every } 1\leq k<\infty,\]
which means that
\[ \begin{pmatrix}
\sum_{i=1}^n\tilde{\cl{\rho}}(S_i\otimes 1_{\cl{A}})\tilde{\cl{\rho}}(S_i\otimes 1_{\cl{A}})^*+C_iC_i^*&\sum_{i=1}^nC_iD_i^* \\
\sum_{i=1}^nC_iC_i^* & \sum_{i=1}^nD_iD_i^*
\end{pmatrix}=1_{\cl{K}} ,\]
or
\[ \begin{pmatrix}
\sum_{i=1}^k\tilde{\cl{\rho}}(S_i\otimes 1_{\cl{A}})\tilde{\cl{\rho}}(S_i\otimes 1_{\cl{A}})^*+C_iC_i^*&\sum_{i=1}^kC_iD_i^* \\
\sum_{i=1}^kC_iC_i^* & \sum_{i=1}^kD_iD_i^*
\end{pmatrix}\leq1_{\cl{K}}, \quad \text{for every } 1\leq k<\infty .\]
Thus, we have that $C_i=0$ for every $i\in \{1,\dots,n\}$ and hence, 
\[ \gamma(S_i\otimes 1_{\cl{A}})=\begin{pmatrix}
\tilde{\rho}(S_i\otimes 1_{\cl{A}})&0 \\
0&D_i
\end{pmatrix} .\]
On the other hand, for each unitary $u\in \cl{A}$, similarly, we have that
\[ \gamma(I_{\cl{H}}\otimes u)=\begin{pmatrix}
\tilde{\rho}(I_{\cl{H}}\otimes u)&0 \\
0&v
\end{pmatrix}, \]
where $v$ is a unitary in $B(\cl{H}^{\perp})$.

Because $\cl{A}$ is spanned by its unitaries, and every $X\otimes z\in \cl{O}_n\otimes_{\min} \cl{A}$ can be written as $X\otimes z=
(X \otimes 1_{\cl{A}})(1_{\cl{H}}\otimes z)$, we see that $\gamma$ is diagonal on all elementary tensors. Then by continuity of $\gamma$,
we know it is diagonal on $\cl{O}_n\otimes_{\min} \cl{A}$.

We now have that the compression of $\gamma$ onto the $1,1$ corner is a $*$-homomorphism from $\cl{O}_n\otimes_{\min} \cl{A}$ to $B(\cl{H})$,
and this compression is exactly $\tilde{\rho}$. Moreover, $\tilde{\rho}(\hat{\cl{S}}\otimes_{\min} \cl{A})\subseteq \cl{O}_n\otimes_{\max} \cl{A}$.
Then, $\tilde{\rho}$ being a $*$-homomorphism implies that $\tilde{\rho}(\cl{O}_n\odot_{\min} \cl{A})\subseteq \cl{O}_n\otimes_{\max} \cl{A}$, where
$\odot_{\min}$ denotes the algebraic tensor product of $\cl{O}_n$ with $\cl{A}$ endowed with the minimal tensor norm. The continuity of $\tilde{\rho}$ implies
further that $\tilde{\rho}(\cl{O}_n\otimes_{\min} \cl{A})\subseteq \cl{O}_n\otimes_{\max} \cl{A}$. Form this, we can conclude that
$\tilde{\rho}(\cl{O}_n\otimes_{\min} \cl{A})=\cl{O}_n\otimes_{\max} \cl{A}$, because by the way $\tilde{\rho}$ is defined, $\Ran\tilde{\rho}$ is dense
in $\cl{O}_n\otimes_{\max} \cl{A}$.

Finally, $\tilde{\rho}(X\otimes z)=X\otimes z$ for every $X\otimes z\in \cl{O}_n\otimes_{\min} \cl{A}$ forces that the identity map from
$\cl{O}_n\odot_{\min} \cl{A}$ to $\cl{O}_n\otimes_{\max} \cl{A}$ extends to a $*$-homomorphism from $\cl{O}_n\otimes_{\min} \cl{A}$
onto $\cl{O}_n\otimes_{\max} \cl{A}$. Thus, $\cl{O}_n\otimes_{\min} \cl{A}=\cl{O}_n\otimes_{\max} \cl{A}$.
\end{proof}

Let $T_1,\dots,T_n$ be the generators of the Toeplitz-Cuntz algebra $\cl{TO}_n$ and $\cl{T}_n$ be the operator system generated by
$T_i$'s. By corollary 3.3 in \cite{ZHE14}, we know that $\cl{T}_n=\cl{S}_n$ via the natural isomorphism.
\begin{thm}\label{nuclearityofCuntz}
 Let $\cl{A}$ be a unital $C^*$-algebra, then we have that $\cl{S}_n\otimes_{\max}\cl{A}\subseteq_{\operatorname{c.o.i}}\cl{TO}_n\otimes_{\max}\cl{A}$, where $\cl{TO}_n$ is the Toeplitz-Cuntz algebra.
\end{thm}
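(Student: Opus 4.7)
The plan is to verify the complete order injection by testing positivity against commuting pairs of UCP maps and then, via an isometric dilation of the induced row contraction together with a simultaneous commutant lifting of the UCP map on $\cl{A}$, to transport positivity from $\cl{TO}_n \otimes_{\max} \cl{A}$ back to $\cl{S}_n \otimes_{\max} \cl{A}$.

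Since the inclusion $\cl{S}_n = \cl{T}_n \hookrightarrow \cl{TO}_n$ is UCP, functoriality of $\otimes_{\max}$ makes the induced map $\cl{S}_n \otimes_{\max} \cl{A} \to \cl{TO}_n \otimes_{\max} \cl{A}$ UCP, so what needs to be shown is the reverse: whenever $u \in M_p(\cl{S}_n \otimes \cl{A})$ is positive in $M_p(\cl{TO}_n \otimes_{\max} \cl{A})$, it is already positive in $M_p(\cl{S}_n \otimes_{\max} \cl{A})$. By Proposition \ref{ceuqalmax}, $\cl{S}_n \otimes_{\max} \cl{A} = \cl{S}_n \otimes_c \cl{A}$, so this reduces to verifying $(\phi \cdot \psi)^{(p)}(u) \geq 0$ for every pair of UCP maps $\phi : \cl{S}_n \to B(\cl{H})$ and $\psi : \cl{A} \to B(\cl{H})$ with commuting ranges, where $\phi \cdot \psi$ denotes the product map $s \otimes a \mapsto \phi(s)\psi(a)$.

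Fix such a pair $(\phi, \psi)$. After an initial Stinespring dilation of $\psi$ with its accompanying commutant lifting of $\phi$, I may assume $\psi$ is a $*$-representation. Setting $R_i := \phi(S_i)$ yields a row contraction on $\cl{H}$ whose entries (and adjoints) lie in $\psi(\cl{A})'$. Applying the refined Bunce dilation to $(R_1,\ldots,R_n)$, one obtains a Hilbert space $\cl{K} \supseteq \cl{H}$ and isometries $V_1,\ldots,V_n \in B(\cl{K})$ with pairwise orthogonal ranges such that $\cl{H}$ is coinvariant under each $V_i$ and $R_i = P_\cl{H} V_i|_\cl{H}$; the $V_i$ generate a $*$-representation $\pi : \cl{TO}_n \to B(\cl{K})$. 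Popescu's multivariable commutant lifting then produces a $*$-representation $\tilde\psi : \cl{A} \to B(\cl{K})$ that commutes with every $V_i$ and $V_i^*$, leaves $\cl{H}$ invariant, and satisfies $\tilde\psi|_\cl{H} = \psi$.

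With $(\pi, \tilde\psi)$ a commuting pair of $*$-representations on $\cl{K}$, the induced unital $*$-homomorphism $\pi \cdot \tilde\psi : \cl{TO}_n \otimes_{\max} \cl{A} \to B(\cl{K})$ carries $u$ to a positive operator. A direct check on the three generator types $s \in \{I, S_i, S_i^*\}$, using the invariance of $\cl{H}$ under $\tilde\psi$ together with the coinvariance of $\cl{H}$ under each $V_i$, shows that $P_\cl{H} \pi(s) \tilde\psi(a)|_\cl{H} = \phi(s)\psi(a)$ for every $a \in \cl{A}$; extending linearly and matricially, compressing $(\pi \cdot \tilde\psi)^{(p)}(u)$ to $\cl{H}^p \subseteq \cl{K}^p$ recovers exactly $(\phi \cdot \psi)^{(p)}(u)$, which is therefore positive. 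The principal technical hurdle is the simultaneous dilation and commutant lifting step: one must produce the isometric dilation of $(R_1,\ldots,R_n)$ together with an extension of the $*$-representation of $\cl{A}$ that remains in the bilateral commutant of the dilated isometries, which is precisely the content of Popescu's multivariable commutant lifting theorem for row contractions.
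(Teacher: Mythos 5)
Your overall strategy is the paper's: reduce via Proposition \ref{ceuqalmax} to the commuting tensor product, test positivity against a commuting pair $(\phi,\psi)$, dilate the row contraction $(\phi(S_1),\dots,\phi(S_n))$ to Toeplitz--Cuntz isometries $V_1,\dots,V_n$ on $\cl K\supseteq\cl H$, extend $\psi$ to $\cl K$ so as to commute with the dilation, and compress back to $\cl H$; your generator-by-generator compression identity is exactly the paper's observation that $\tilde\varphi=P_{\cl H}\pi|_{\cl H}$ extends $\varphi$ with range commuting with $\psi$. The one step that is not yet justified is the production of $\tilde\psi$. Popescu's commutant lifting theorem, as standardly stated, takes a single operator $A$ with $A\phi(S_i)=\phi(S_i)A$ and produces a contractive lift $B$ with $BV_i=V_iB$ and $P_{\cl H}B|_{\cl H}=A$; it does not assert that $B$ commutes with $V_i^*$, that $B$ leaves $\cl H$ invariant, or that $A\mapsto B$ can be chosen multiplicatively across all of $\psi(\cl A)$ at once --- and you need all three (two-sided commutation so that $(\pi,\tilde\psi)$ induces a $*$-homomorphism of $\cl{TO}_n\otimes_{\max}\cl A$, and invariance together with coinvariance so that the compression reproduces $\phi(s)\psi(a)$). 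The paper closes precisely this gap with Lemma \ref{Buncerefine}: Bunce's dilation can be realized with every matrix entry of the $V_i$ lying in $C^*(I,\phi(S_1),\dots,\phi(S_n))\subseteq\psi(\cl A)'$, so the block-diagonal ampliation $\tilde\psi=\psi\oplus(\oplus_{k=1}^{\infty}\psi^{(n)})$ visibly commutes with each $V_i$ and $V_i^*$ and reduces $\cl H$. If you replace the appeal to commutant lifting by such a concrete form of the dilation, your argument is complete; the preliminary Stinespring dilation of $\psi$ then becomes unnecessary, since the ampliation works for a UCP map $\psi$ just as well as for a representation.
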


Before proving this theorem, we need the following refined version of Bunce's result \cite[Proposition 1.]{BUN84}.
\begin{lemma}\label{Buncerefine}
 Let $(A_1,\dots,A_n)\in B(\cl{H})$ be a row contraction, then there exist isometric dilations $W_1,\dots,W_n\in B(\cl{K})$ of $A_1,\dots,A_n$ such that
 $W_i^*W_j=0$ for $i\neq j$, where $\cl{K}=\cl{H}\oplus(\oplus_{k=1}^{\infty}\cl{H}^{(n)})$. Moreover, $T_i$ can be chosen as the following form,
 \[ W_i=\begin{pmatrix}
         A_i&0\\
         X_i&YZ_i
        \end{pmatrix}
,\]
where the entries of $X_i$, $Y$ and $Z_i$ are all from $C^*(I, A_1,\dots,A_n)$.
 \end{lemma}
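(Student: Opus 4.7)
The plan is a careful refinement of the standard Bunce/Popescu isometric dilation of a row contraction. First, pack the tuple into the row operator $A=[A_1\mid\cdots\mid A_n]:\cl H^{(n)}\to\cl H$; the row-contraction hypothesis $\sum_i A_iA_i^*\le I$ is equivalent to $A^*A\le I_{\cl H^{(n)}}$. Set $D=(I_{\cl H^{(n)}}-A^*A)^{1/2}$ and write $D=(d_{ki})_{k,i=1}^n$ as an $n\times n$ block matrix with entries $d_{ki}\in B(\cl H)$. Since the entries of $I-A^*A$ are $\delta_{ij}I_{\cl H}-A_i^*A_j\in C^*(I,A_1,\dots,A_n)$, the matrix $I-A^*A$ lies in the $C^*$-subalgebra $M_n(C^*(I,A_1,\dots,A_n))\subseteq B(\cl H^{(n)})$, which is closed under continuous functional calculus; hence the $d_{ki}$ also lie in $C^*(I,A_1,\dots,A_n)$.

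Next, write $\cl K=\cl H\oplus\cl L$ with $\cl L=\bigoplus_{k=1}^\infty \cl H^{(n)}$. Define
\[
X_i:\cl H\to\cl L,\qquad X_i h=(d_{1i}h,d_{2i}h,\dots,d_{ni}h)\oplus 0\oplus 0\oplus\cdots,
\]
let $Y:\cl L\to\cl L$ be the unilateral shift by one $\cl H^{(n)}$-block, $Y(\xi_1,\xi_2,\dots)=(0,\xi_1,\xi_2,\dots)$, and pick any $n$-tuple $Z_1,\dots,Z_n:\cl L\to\cl L$ satisfying the Cuntz relations $Z_i^*Z_j=\delta_{ij}I_{\cl L}$ and $\sum_i Z_iZ_i^*=I_{\cl L}$; such $Z_i$ exist because $\cl L\cong\cl L^{(n)}$ as a separable infinite-dimensional Hilbert space. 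Every entry of $X_i$ is some $d_{ki}\in C^*(I,A_1,\dots,A_n)$, and every entry of $Y$ and $Z_i$ is $0$ or $I_{\cl H}$, so all the entries lie in $C^*(I,A_1,\dots,A_n)$ as required.

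The isometric and orthogonality conditions reduce to a block-matrix computation on $W_i=\bigl(\begin{smallmatrix}A_i&0\\ X_i&YZ_i\end{smallmatrix}\bigr)$. The $(1,1)$-entry of $W_i^*W_j$ is $A_i^*A_j+X_i^*X_j$, and a direct calculation yields $X_i^*X_j=(D^2)_{ij}=\delta_{ij}I_{\cl H}-A_i^*A_j$, so the sum is $\delta_{ij}I_{\cl H}$. The $(2,2)$-entry is $Z_i^*Y^*YZ_j=Z_i^*Z_j=\delta_{ij}I_{\cl L}$ because $Y$ is isometric. The off-diagonal entries vanish because the range of $X_i$ sits inside the first $\cl H^{(n)}$-summand of $\cl L$ while the range of $Y$ is $\bigoplus_{k\ge 2}\cl H^{(n)}$, forcing $X_i^*Y=0$ and hence $X_i^*YZ_j=0$. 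Together these show $W_i^*W_j=\delta_{ij}I_{\cl K}$. The block lower-triangular shape of $W_i$ gives $P_{\cl H}W_i\vert_{\cl H}=A_i$ immediately, and an induction on word length yields $P_{\cl H}W_{i_1}\cdots W_{i_k}\vert_{\cl H}=A_{i_1}\cdots A_{i_k}$, exhibiting $(W_1,\dots,W_n)$ as a row-isometric power dilation of $(A_1,\dots,A_n)$.

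The only real obstacle is the functional-calculus step that places the entries of $D$ inside $C^*(I,A_1,\dots,A_n)$; once that is in hand, the rest is block-matrix bookkeeping. Tracking the entries explicitly is the whole point of the refinement, since later arguments will need to apply completely positive maps defined on $C^*(I,A_1,\dots,A_n)$ blockwise to $W_i$.
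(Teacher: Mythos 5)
Your proposal is correct and is essentially the paper's approach: the paper simply cites Bunce's construction and observes that the $Z_i$ may be chosen with entries $0$ or $I$, whereas you reconstruct that dilation explicitly (the defect operator $D=(I-A^*A)^{1/2}$ whose entries lie in $C^*(I,A_1,\dots,A_n)$ by functional calculus in $M_n(C^*(I,A_1,\dots,A_n))$, the columns of $D$ feeding the first $\cl H^{(n)}$-block, a block shift $Y$, and Cuntz isometries $Z_i$ on $\cl L$), and your block computations of $W_i^*W_j=\delta_{ij}I_{\cl K}$ and of the power-dilation property are right. The one sentence that needs repair is ``pick any $n$-tuple $Z_1,\dots,Z_n$ satisfying the Cuntz relations'': an arbitrary such tuple on $\cl L$ need not have matrix entries equal to $0$ or $I_{\cl H}$ (conjugating a nice tuple by a unitary that mixes the $\cl H$-coordinates destroys this), so you must fix the specific $Z_k$ induced by a reindexing of the copies of $\cl H$ --- e.g.\ $Z_k$ sending the $j$-th copy to the $((j-1)n+k)$-th --- which is exactly the explicit choice the paper's proof records.
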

\begin{proof}
 The fact that the entries of $X_i$ and $Y$ are from $C^*(I, A_1,\dots,A_n)$ is directly from Bunce's construction. On the other hand, by his
 construction, $Z_i$'s can be any set of Cuntz isometries
 on $\oplus_{k=1}^{\infty}\cl{H}^{(n)}$ so we can choose a particular one as:
 \[ Z_k=(Z_{ij})=\begin{cases}
         I^{(n)}& \mbox{ if } i=(j-1)n+k\\
         0& \mbox{ else},
        \end{cases}
\]
where $I^{(n)}$ denotes the identity operator on $\oplus_{k=1}^{\infty}\cl{H}^{(n)}$.
\end{proof}
\begin{proof}[Proof of Theorem \ref{nuclearityofCuntz}]
By Proposition \ref{ceuqalmax}, we can show instead that $\cl{T}_n$ satisfies that
\[ \cl{T}_n\otimes_c\cl{A}\subseteq_{\operatorname{c.o.i}}\cl{TO}_n\otimes_{c}\cl{A}.\]
To this end, it is enough to show that for any pair of unital completely positive maps $\varphi: \cl{T}_n\to B(\cl{H})$ and $\psi:\cl{A}\to B(\cl{H})$ with commuting ranges, there always exists an extension $\tilde{\varphi}:\cl{TO}_n$ of $\varphi$ such
that the range of $\tilde{\varphi}$ and $\psi$ commute.

Since $\varphi$ is unitally completely positive, $(\varphi(T_1),\dots,\varphi(T_n))$ is a row contraction and hence can be dilated to isometries $W_1,\dots,W_n$ with
orthogonal ranges, by Lemma \ref{Buncerefine}. Then, there is a $*$-homomorphism $\pi:\cl{TO}_n\to B(\cl{K})$ such that $\pi(T_i)=W_i$. Meanwhile, we set
$\tilde{\psi}: \cl{R}\to B(\cl{K})$ as $\tilde{\psi}=\psi\oplus(\oplus_{k=1}^\infty\psi^{(n)})$, where $\psi^{(n)}$ denotes the direct sum of $n$ copies of $\psi$.

It is easy to see that $\tilde{\psi}$ and $\pi$ have commuting ranges. Clearly, $\psi= P_{\cl{H}}\tilde{\psi}|_{\cl{H}}$. Now, let $\tilde{\varphi}=P_{\cl{H}}\pi|_{\cl{H}}$, then it follows that $\tilde{\varphi}$ is a unital completely positive extension of $\varphi$ and $\psi$ and $\tilde{\varphi}$ has commuting ranges.
Thus, we have shown that $\cl{T}_n\otimes_c\cl{\cl{A}}\subseteq_{c.o.i}\cl{TO}_n\otimes_{c}\cl{A}$.
\end{proof}

\begin{cor} Let $\cl A$ be a unital C*-algebra. If $\cl{TO}_n \otimes_{\min}
  \cl A = \cl{TO}_n \otimes_{\max} \cl A$, then $\cl S_n \otimes_{\min}
  \cl A = \cl S_n \otimes_{\max} \cl A.$
\end{cor}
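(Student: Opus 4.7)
The plan is to chain together three facts: an embedding of $\cl S_n$ into $\cl{TO}_n$ at the min level (from injectivity of min), the hypothesized equality at $\cl{TO}_n$, and Theorem \ref{nuclearityofCuntz} giving the embedding $\cl S_n \otimes_{\max} \cl A \subseteq_{\operatorname{c.o.i}} \cl{TO}_n \otimes_{\max} \cl A$.

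First, I would note that via the natural identification $\cl T_n = \cl S_n$ from \cite[Corollary 3.3]{ZHE14}, we have $\cl S_n \subseteq_{\operatorname{c.o.i}} \cl{TO}_n$ as operator systems. By the injectivity of the min tensor product applied to this inclusion and the identity on $\cl A$, we obtain
\[
\cl S_n \otimes_{\min} \cl A \subseteq_{\operatorname{c.o.i}} \cl{TO}_n \otimes_{\min} \cl A.
\]
By the hypothesis, the right-hand side equals $\cl{TO}_n \otimes_{\max} \cl A$ (as operator systems, i.e.\ with the same matricial order). Finally, Theorem \ref{nuclearityofCuntz} supplies
\[
\cl S_n \otimes_{\max} \cl A \subseteq_{\operatorname{c.o.i}} \cl{TO}_n \otimes_{\max} \cl A.
\]

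To conclude, I would verify that the identity map from $\cl S_n \otimes_{\min} \cl A$ to $\cl S_n \otimes_{\max} \cl A$ is completely positive (the reverse direction being automatic from $\min \leq \max$). Fix $p \geq 1$ and let $x \in M_p(\cl S_n \otimes \cl A)$ be positive in $M_p(\cl S_n \otimes_{\min} \cl A)$. The first embedding shows $x$ is positive in $M_p(\cl{TO}_n \otimes_{\min} \cl A)$; the hypothesized equality then gives positivity in $M_p(\cl{TO}_n \otimes_{\max} \cl A)$; and since $\cl S_n \otimes_{\max} \cl A$ sits inside $\cl{TO}_n \otimes_{\max} \cl A$ as a complete order injection, $x$ is positive in $M_p(\cl S_n \otimes_{\max} \cl A)$. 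This gives the desired equality $\cl S_n \otimes_{\min} \cl A = \cl S_n \otimes_{\max} \cl A$.

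There is essentially no obstacle here: the corollary is a formal consequence of Theorem \ref{nuclearityofCuntz} together with min-injectivity. The only subtle point worth stating explicitly is that a complete order injection at the min (resp. max) level means positivity of a specific element in the ambient tensor product forces positivity already in the subsystem tensor product, which is exactly what is needed to transport positivity back from $\cl{TO}_n \otimes_{\max} \cl A$ down to $\cl S_n \otimes_{\max} \cl A$.
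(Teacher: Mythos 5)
Your argument is correct and is essentially the paper's own proof: both combine min-injectivity for the inclusion $\cl S_n = \cl T_n \subseteq_{\operatorname{c.o.i}} \cl{TO}_n$, the hypothesized equality at the $\cl{TO}_n$ level, and the complete order embedding $\cl S_n \otimes_{\max} \cl A \subseteq_{\operatorname{c.o.i}} \cl{TO}_n \otimes_{\max} \cl A$ from Theorem \ref{nuclearityofCuntz} to transport positivity back down. You simply spell out the diagram chase on positive matrices a bit more explicitly than the paper does.
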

\begin{proof}
By the nuclearity of $\cl{TO}_n$ and the injectivity of the min-tensor product, we have the following relations:
 \begin{align*}
  \cl{T}_n\otimes_{\min} \cl{A}\subseteq_{\operatorname{c.o.i}}\cl{TO}_n&\otimes_{\min}\cl{A}\\
  &\veq \\
    \cl{T}_n\otimes_{c=\max} \cl{A}\subseteq_{\operatorname{c.o.i}}\cl{TO}_n&\otimes_{c=\max}\cl{A}.
 \end{align*}
This implies that $\cl{T}_n\otimes_{\min} \cl{A}=  \cl{T}_n\otimes_{\max} \cl{A}$, so, equivalently, we know that $\cl{S}_n\otimes_{\min} \cl{A}=  \cl{S}_n\otimes_{\max} \cl{A}$

\end{proof}
\begin{cor}\label{OnimpliesSn}
Let $\cl A$ be a unital C*-algebra. If $\cl O_n \otimes_{\min}
  \cl A = \cl O_n \otimes_{\max} \cl A$, then $\cl S_n \otimes_{\min}
  \cl A = \cl S_n \otimes_{\max} \cl A.$
\end{cor}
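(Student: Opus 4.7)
The plan is to reduce the statement to the preceding corollary, which already gives the conclusion under the hypothesis that $\cl{TO}_n \otimes_{\min}\cl{A} = \cl{TO}_n \otimes_{\max}\cl{A}$. It therefore suffices to upgrade the given hypothesis on $\cl{O}_n$ to the analogous equality for $\cl{TO}_n$, after which the preceding corollary delivers the result with no further work.

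To bridge $\cl{O}_n$ and $\cl{TO}_n$, I would use the short exact sequence of $C^*$-algebras
\[
0 \longrightarrow \cl{K}(\mathcal{F}_n) \longrightarrow \cl{TO}_n \xrightarrow{\pi} \cl{O}_n \longrightarrow 0,
\]
where $\cl{K}(\mathcal{F}_n)$ is the ideal of compact operators on the full Fock space and is itself nuclear. Tensoring with $\cl{A}$ yields a $\max$-sequence that is automatically exact (exactness of $\otimes_{\max}$ for $C^*$-algebras) and a $\min$-sequence that is exact at the middle term because the kernel $\cl{K}(\mathcal{F}_n)$ is nuclear. These two short exact sequences fit into a commutative diagram whose vertical arrows are the canonical surjections $\otimes_{\max} \to \otimes_{\min}$. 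The leftmost vertical map is an isomorphism by nuclearity of $\cl{K}(\mathcal{F}_n)$, and the rightmost vertical map is an isomorphism by the hypothesis $\cl{O}_n\otimes_{\min}\cl{A} = \cl{O}_n\otimes_{\max}\cl{A}$. A routine five-lemma argument then forces the middle vertical map $\cl{TO}_n \otimes_{\max}\cl{A} \to \cl{TO}_n \otimes_{\min}\cl{A}$ to be an isomorphism, yielding $\cl{TO}_n \otimes_{\min}\cl{A} = \cl{TO}_n\otimes_{\max}\cl{A}$. Applying the preceding corollary completes the proof.

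The main technical point I expect to verify is exactness of the $\min$-tensor sequence at the middle term when $\cl{A}$ is an arbitrary (possibly non-exact) unital $C^*$-algebra, that is, the identification of the kernel of $\cl{TO}_n\otimes_{\min}\cl{A} \twoheadrightarrow \cl{O}_n\otimes_{\min}\cl{A}$ with $\cl{K}(\mathcal{F}_n)\otimes_{\min}\cl{A}$. This is exactly the input the five-lemma requires, and it is precisely the nuclearity of the ideal $\cl{K}(\mathcal{F}_n)$ that saves us from needing any exactness hypothesis on $\cl{A}$; this is a standard fact in the theory of $C^*$-algebra extensions.
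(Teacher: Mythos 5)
Your proposal is correct and follows essentially the same route as the paper: both use the extension $0 \to \bb{K} \to \cl{TO}_n \to \cl{O}_n \to 0$, tensor it with $\cl{A}$ in the min and max tensor products, invoke nuclearity of the compacts and the hypothesis on $\cl{O}_n$ to see that the outer vertical maps are isomorphisms, deduce via a five-lemma/diagram chase that $\cl{TO}_n \otimes_{\min}\cl{A} = \cl{TO}_n \otimes_{\max}\cl{A}$, and then apply the preceding corollary. Your explicit flagging of the exactness of the min-tensor sequence at the middle term (which holds because the ideal of compacts is nuclear, with no exactness assumption on $\cl{A}$) is a point the paper passes over silently, but it is the same argument.
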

\begin{proof} 
Since $\cl{O}_n=\cl{TO}_n/\bb{K}$ \cite[Proposition 3.1]{CUN77} ($\bb{K}$ denotes the algebra of compact operators), we have the following commuting diagram:
\[ \xymatrix{\bb{K}\otimes_{\max}\cl{A}\ar[d]\ar[r]&\cl{TO}_n\otimes_{\max}\cl{A}\ar[d]\ar[r]&\cl{O}_n\otimes_{\max}\cl{A}\ar[d]\\
\bb{K}\otimes_{\min}\cl{A}\ar[r]&\cl{TO}_n\otimes_{\min}\cl{A}\ar[r]&\cl{O}_n\otimes_{\min}\cl{A}.}
  \]
By assumption, we have that $\cl O_n \otimes_{\min}
  \cl A = \cl O_n \otimes_{\max} \cl A$. Also, we know that $\bb{K}$ is nuclear, so 
  $\bb{K} \otimes_{\min}
  \cl A = \bb{K} \otimes_{\max} \cl A$. This implies that the first and third vertical map in the above diagram are indeed isomorphisms. Hence, the second vertical map is also an
  isomorphism, that is, $\cl{TO}_n \otimes_{\min}
  \cl A = \cl{TO}_n \otimes_{\max} \cl A$. Now the conclusion follows from the above corollary.

\end{proof}

Combining Corollary \ref{OnimpliesSn} with Lemma \ref{operatorsystemimpliesnuclearity}, we have

\begin{thm} Let $\cl A$ be a unital C*-algebra. Then $\cl O_n
  \otimes_{\min} \cl A= \cl O_n \otimes_{\max} \cl A$ if and only if
  $\cl S_n \otimes_{\min} \cl A = \cl S_n \otimes_{\max} \cl A.$
\end{thm}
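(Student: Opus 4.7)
The theorem is an ``if and only if'' statement whose two directions are already set up by the two preceding results in the excerpt, so my plan is simply to verify that each direction reduces cleanly to one of them.

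For the forward implication, assuming $\cl O_n \otimes_{\min} \cl A = \cl O_n \otimes_{\max} \cl A$, I would quote Corollary~\ref{OnimpliesSn} directly, which gives exactly $\cl S_n \otimes_{\min} \cl A = \cl S_n \otimes_{\max} \cl A$. No additional work is needed here.

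For the reverse implication, assuming $\cl S_n \otimes_{\min} \cl A = \cl S_n \otimes_{\max} \cl A$, I would invoke Lemma~\ref{operatorsystemimpliesnuclearity} with the choice $\hat{\cl S} = \cl S_n$. Since $\cl S_n \subseteq \cl O_n$ is an operator system that trivially contains $\cl S_n$, the hypothesis of the lemma is satisfied, and its conclusion yields $\cl O_n \otimes_{\min} \cl A = \cl O_n \otimes_{\max} \cl A$, as required.

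There is essentially no obstacle: all of the real content, namely the Stinespring dilation argument that pushes the min/max equality from the operator system up to the ambient Cuntz algebra, has already been packaged into Lemma~\ref{operatorsystemimpliesnuclearity}, and the passage from $\cl O_n$ down to $\cl S_n$ has been packaged into Corollary~\ref{OnimpliesSn} using the Toeplitz--Cuntz algebra and nuclearity of $\bb K$. The theorem is therefore a one-line consequence of combining the two, and the proof I would write is just a statement of that combination.
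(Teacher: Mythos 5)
Your proposal is correct and is exactly the paper's argument: the theorem is stated there as an immediate consequence of combining Corollary~\ref{OnimpliesSn} (for the forward direction) with Lemma~\ref{operatorsystemimpliesnuclearity} applied to $\hat{\cl S}=\cl S_n$ (for the reverse direction). Nothing further is needed.
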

Since $\cl O_n$ is nuclear, we immediately have the following corollary.
\begin{cor}
We have that the $\cl{S}_n$ is $C^*$-nuclear.
\end{cor}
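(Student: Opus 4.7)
The plan is to derive this as an immediate corollary of the theorem immediately preceding it, which established the biconditional
\[ \cl{O}_n \otimes_{\min} \cl{A} = \cl{O}_n \otimes_{\max} \cl{A} \quad \Longleftrightarrow \quad \cl{S}_n \otimes_{\min} \cl{A} = \cl{S}_n \otimes_{\max} \cl{A} \]
for every unital $C^*$-algebra $\cl{A}$. So the only external ingredient I need to cite is the classical fact that the Cuntz algebra $\cl{O}_n$ is nuclear, i.e., that the left-hand equality above holds for every unital $C^*$-algebra $\cl{A}$.

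From there, the deduction is essentially a single application of the biconditional. Given any unital $C^*$-algebra $\cl{A}$, the nuclearity of $\cl{O}_n$ yields $\cl{O}_n \otimes_{\min} \cl{A} = \cl{O}_n \otimes_{\max} \cl{A}$, and the preceding theorem then promotes this to $\cl{S}_n \otimes_{\min} \cl{A} = \cl{S}_n \otimes_{\max} \cl{A}$. Since $\cl{A}$ was arbitrary, Definition \ref{defcnulcear} gives that $\cl{S}_n$ is $C^*$-nuclear.

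There is no real obstacle to overcome at this point in the argument: the substantive content was carried by Lemma \ref{operatorsystemimpliesnuclearity} (which lifts operator-system tensor equalities on $\cl{S}_n \subseteq \cl{O}_n$ up to $\cl{O}_n$) and by Theorem \ref{nuclearityofCuntz} together with Corollary \ref{OnimpliesSn} (which push the equality down from $\cl{O}_n$ through $\cl{TO}_n$ to $\cl{S}_n$). The present corollary simply harvests those reductions by inputting the one piece of information still missing, namely the nuclearity of $\cl{O}_n$ itself.
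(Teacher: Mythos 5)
Your proposal is correct and matches the paper exactly: the paper states the corollary as an immediate consequence of the preceding biconditional theorem together with the classical nuclearity of $\cl{O}_n$, which is precisely the one-line deduction you give. Nothing further is needed.
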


\section{Equivalent Conditions of the $C^*$-Nuclearity of $\cl{S}_n$ and the Dual Row Contraction Version of Ando's Theorem}


In this section, we prove some necessary and sufficient conditions for $\cl{O}_n\otimes_{\min}\cl{A}=\cl{O}_n\otimes_{\max}\cl{A}$. Recall that in \cite{ZHE14}, we denote $\cl{E}_n=\Span\{E_{00},\sum_{i=1}^nE_{ii},E_{i0},E_{0i}:1\leq i\leq n\}$, where $E_{ij}$'s are the matrix units in $M_{n+1}$, 
and we proved that 
\begin{thm}\cite{ZHE14}
The map $\phi:\cl{E}_n\to \cl{S}_n$ defined by the following:
\[ \phi(E_{i0})=\frac{1}{2}S_i,\ \phi(E_{0i})=\frac{1}{2}S_i^*,\ \phi(E_{00})=\frac{1}{2}I,\ \phi(\sum_{i=1}^nE_{ii})=\frac{1}{2}I,\quad 1\leq i\leq n \]
is a complete quotient map, that is,  $\cl{E}_n/J\cong \cl{S}_n$ completely order isomorphically, where $J:=\Ker \phi=\Span\{E_{00}-\sum_{i=1}^nE_{ii}\}$.
\end{thm}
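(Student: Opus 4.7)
The plan is to recognize that both the complete positivity of $\phi$ and the fact that the induced bijection $\tilde\phi:\cl{E}_n/J\to\cl{S}_n$ has a completely positive inverse are governed by a common parametrization of the unital completely positive maps into $B(\cl{H})$. On the $\cl{S}_n$-side this parametrization is Bunce's: UCP maps $\cl{S}_n\to B(\cl{H})$ correspond to row contractions $(A_1,\dots,A_n)$ via $A_i=\psi(S_i)$. The strategy is to establish the matching parametrization on the $\cl{E}_n/J$-side and then conclude by the standard fact that positivity in an operator system is detected by its UCP images in $B(\cl{H})$.

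Well-definedness, unitality, and the identification $\Ker\phi=J$ are routine. To see that $\phi$ is completely positive, I would build an explicit Stinespring extension $\Phi:M_{n+1}\to B(\cl{H})$, where $\cl{O}_n\subseteq B(\cl{H})$. Take $V:\cl{H}\to\cl{H}^{n+1}$ given by $Vh=\tfrac{1}{\sqrt{2}}(h,S_1^*h,\dots,S_n^*h)$; the Cuntz identity $\sum_iS_iS_i^*=I$ gives $V^*V=I_{\cl{H}}$, so $V$ is an isometry, and an entry-wise calculation shows that $\Phi(A):=V^*(A\otimes I_{\cl{H}})V$ agrees with $\phi$ on the generators of $\cl{E}_n$. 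Hence $\phi$ is a UCP surjection and $\tilde\phi$ is a UCP bijection.

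For the reverse direction, I would show that every UCP map $\Psi:\cl{E}_n/J\to B(\cl{K})$ factors through $\tilde\phi$. Given $\Psi$, lift to a UCP map on $\cl{E}_n\to B(\cl{K})$ that annihilates $J$ and then Arveson-extend to a UCP $\hat\Psi:M_{n+1}\to B(\cl{K})$. The normalization $\hat\Psi(E_{00})=\hat\Psi(\sum_iE_{ii})=\tfrac{1}{2}I$ forced by vanishing on $J$, together with positivity of the $2\times 2$ corner of the Choi matrix indexed by $0$ and $i$, yields $\tfrac{1}{2}A_iA_i^*\le\hat\Psi(E_{ii})$ where $A_i:=2\hat\Psi(E_{i0})$; summing over $i$ gives $\sum A_iA_i^*\le I$, so $(A_i)$ is a row contraction. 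Bunce's dilation theorem (Lemma~\ref{Buncerefine}) then produces a UCP $\psi:\cl{S}_n\to B(\cl{K})$ with $\psi(S_i)=A_i$, and $\psi\circ\tilde\phi$ agrees with $\Psi$ on the generators $[E_{00}]$ and $[E_{i0}]$, hence everywhere. Running the same argument with $B(\cl{K})$ replaced by $M_p(B(\cl{K}))$ gives the matrix-level bijection, and for any $Y\in M_p(\cl{S}_n)^+$ the identity $\Psi_p(\tilde\phi_p^{-1}(Y))=\psi_p(Y)\ge 0$ holding for all such $\Psi$ forces $\tilde\phi_p^{-1}(Y)\ge 0$ in $M_p(\cl{E}_n/J)$. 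Thus $\tilde\phi$ is the desired complete order isomorphism.

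The hard step is the factorization $\Psi=\psi\circ\tilde\phi$: one must keep careful track of the interplay between the Choi positivity of the Arveson extension on $M_{n+1}$ and the unital-normalization constraint coming from $J$, and then use the refined Bunce dilation to travel from the resulting row contraction back to a UCP map on $\cl{S}_n$. Everything else reduces to Choi-matrix bookkeeping and standard operator-system functorialities already recorded in the excerpt.
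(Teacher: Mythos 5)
Your proposal is correct, but be aware that this paper does not actually prove the statement: it is quoted from \cite{ZHE14}, so there is no in-text proof to compare against. Taken on its own terms, your argument works. The compression $\Phi(A)=V^*(A\otimes I)V$ with $Vh=\tfrac{1}{\sqrt 2}(h,S_1^*h,\dots,S_n^*h)$ does reproduce $\phi$ on the generators (the Cuntz relation $\sum_i S_iS_i^*=I$ gives both $V^*V=I$ and $\Phi(\sum_i E_{ii})=\tfrac12 I$), so $\phi$ is unital completely positive and $J=\Ker\phi$ is a genuine kernel. For the converse, your dualization --- showing every unital completely positive $\Psi:\cl E_n/J\to B(\cl K)$ factors as $\psi\circ\tilde\phi$ and then testing $\tilde\phi_p^{-1}(Y)$ against a unital complete order embedding of $\cl E_n/J$ into some $B(\cl K)$ --- is a valid substitute for exhibiting positive lifts directly, and the Choi-matrix/Schur-complement step correctly extracts $\sum_i A_iA_i^*\le I$ from the forced normalization $\hat\Psi(E_{00})=\hat\Psi(\sum_iE_{ii})=\tfrac12 I$. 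The one ingredient you should make explicit is the passage from the row contraction $(A_1,\dots,A_n)$ back to a unital completely positive $\psi:\cl S_n\to B(\cl K)$ with $\psi(S_i)=A_i$: Lemma \ref{Buncerefine} only produces isometries with orthogonal ranges, so you also need the complete order isomorphism $\cl T_n\cong\cl S_n$ of \cite[Corollary 3.3]{ZHE14} (equivalently, the universal property invoked in Proposition \ref{positiveindual}) to compress back down to $\cl S_n$; since that isomorphism is established in \cite{ZHE14} prior to and independently of the quotient theorem, there is no circularity. Compared with the standard way of certifying a complete quotient map --- lifting each strictly positive element of $M_p(\cl S_n)$ to a positive element of $M_p(\cl E_n)$, which is the mode in which Lemma \ref{positivelift} and the theorem following it operate --- your route trades the $\epsilon$-bookkeeping of explicit lifts for Arveson extension plus Choi positivity; both amount to the observation that unital completely positive maps out of $\cl E_n/J$ and out of $\cl S_n$ are parametrized by the same row contractions.
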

The next proposition shows that 
the operators system $\cl{E}_n$ is $C^*$-nuclear. Before proving it,
let's recall a useful result \cite[Lemma1.7]{FKPT14}.
\begin{lemma}\label{PtensorQinmax}
 Let $\cl{S}$ and $\cl{T}$ be operator systems, then $u\in
  (\cl{S}\otimes_{\max}\cl{T})^+ $ if and only if for each $\epsilon>0$, there exist $(P^{\epsilon}_{ij})\in M_{k_{\epsilon}}(\cl{S})^+$, $(Q^{\epsilon}_{ij})\in M_{k_{\epsilon}}(\cl{T})^+$, such that 
\[ \epsilon 1_{\cl{S}}\otimes 1_{\cl{T}}+u=\sum_{i,j=1}^k P^{\epsilon}_{ij}\otimes Q^{\epsilon}_{ij} .\]
\end{lemma}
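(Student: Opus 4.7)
The plan is to unwind the definition of the max tensor product, together with its Archimedeanization, and reduce the lemma to an explicit reshaping identity between $1\times pq$ row vectors and $p\times q$ scalar matrices. Recall from \cite{KPTT11} that the level-one cone on $\cl{S}\otimes_{\max}\cl{T}$ is the Archimedeanization of the pre-cone
\[ D^{\max}_1 := \{\alpha(P\otimes Q)\alpha^* : P\in M_p(\cl{S})^+,\ Q\in M_q(\cl{T})^+,\ \alpha\in M_{1,pq},\ p,q\in\bb{N}\}, \]
in the sense that $u\in(\cl{S}\otimes_{\max}\cl{T})^+$ iff $u+\epsilon(1_{\cl{S}}\otimes 1_{\cl{T}})\in D^{\max}_1$ for every $\epsilon>0$. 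So it suffices to show that $D^{\max}_1$ coincides with the set of Schur-type sums
\[ \Bigl\{\sum_{i,j=1}^k P_{ij}\otimes Q_{ij} : (P_{ij})\in M_k(\cl{S})^+,\ (Q_{ij})\in M_k(\cl{T})^+,\ k\in\bb{N}\Bigr\}. \]

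For the inclusion ``Schur-sums $\subseteq D^{\max}_1$'', given $(P_{ij})\in M_k(\cl{S})^+$ and $(Q_{ab})\in M_k(\cl{T})^+$, regard them as $P$ and $Q$ of sizes $p=q=k$, and choose $\alpha\in M_{1,k^2}$ by $\alpha_{(i,a)}:=\delta_{ia}$. A direct index chase gives
\[ \alpha(P\otimes Q)\alpha^* = \sum_{(i,a),(j,b)}\delta_{ia}\delta_{jb}\,P_{ij}\otimes Q_{ab} = \sum_{i,j=1}^k P_{ij}\otimes Q_{ij}, \]
witnessing membership in $D^{\max}_1$.

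For the reverse inclusion, start with $\alpha(P\otimes Q)\alpha^*\in D^{\max}_1$ where $P\in M_p(\cl{S})^+$, $Q\in M_q(\cl{T})^+$, and $\alpha\in M_{1,pq}$. Reshape $\alpha$ into a scalar $p\times q$ matrix $A$ via $A_{ik}:=\alpha_{(i,k)}$, and (if $p\neq q$) pad with zero rows or columns so that the two sides have matching size $k:=\max(p,q)$. Expanding along all four indices,
\[ \alpha(P\otimes Q)\alpha^* = \sum_{i,j,k,l}A_{ik}\overline{A_{jl}}\,P_{ij}\otimes Q_{kl} = \sum_{i,j=1}^p P_{ij}\otimes Q'_{ij}, \]
where $Q'_{ij}:=\sum_{k,l}A_{ik}Q_{kl}\overline{A_{jl}}$; equivalently, $(Q'_{ij})=AQA^*$. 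Since congruence by a scalar matrix preserves matrix positivity (the map $X\mapsto AXA^*$ from $M_q(\cl{T})$ to $M_p(\cl{T})$ is completely positive), $(Q'_{ij})\in M_p(\cl{T})^+$, and the element has been recast in the required Schur-sum form.

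Combining the two inclusions with the Archimedeanization yields the lemma: $u\in(\cl{S}\otimes_{\max}\cl{T})^+$ iff for every $\epsilon>0$ the perturbation $\epsilon(1_{\cl{S}}\otimes 1_{\cl{T}})+u$ admits a decomposition $\sum_{i,j=1}^{k_\epsilon}P^\epsilon_{ij}\otimes Q^\epsilon_{ij}$ with positive $(P^\epsilon_{ij})$ and $(Q^\epsilon_{ij})$. The only real obstacle is the bookkeeping of the four indices $i,j,k,l$ when switching between the row-vector and matrix views of $\alpha$; once that is in place, both inclusions are essentially algebraic identities combined with the elementary fact that a scalar congruence is completely positive.
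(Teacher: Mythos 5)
Your proof is correct. The paper does not actually prove this lemma --- it is quoted verbatim as \cite[Lemma 1.7]{FKPT14} --- so there is no in-paper argument to compare against; your reduction to the reshaping identity $\alpha(P\otimes Q)\alpha^*=\sum_{i,j=1}^{p}P_{ij}\otimes (AQA^*)_{ij}$, combined with the fact that scalar congruence $X\mapsto AXA^*$ preserves matrix positivity and the Archimedeanization step in the definition of $D_1^{\max}$, is exactly the standard derivation of this result from the KPTT definition of the max tensor product. One small remark: in the reverse inclusion no padding is needed, since $(P_{ij})\in M_p(\cl{S})^+$ and $(Q'_{ij})=AQA^*\in M_p(\cl{T})^+$ are automatically both of size $p$, so you may simply take $k=p$.
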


\begin{prop}\label{Enuclear}
We have that $\cl{E}_n\otimes_{\min}\cl{A}=\cl{E}_n\otimes_{\max}\cl{A}$ for every unital $C^*$-algebra $\cl{A}$.
\end{prop}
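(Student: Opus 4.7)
The plan is to verify the max-positivity criterion of Lemma~\ref{PtensorQinmax} directly. Given $u\in(\cl{E}_n\otimes_{\min}\cl{A})^+$ (and similarly at the matricial level), the complete order inclusion $\cl{E}_n\subseteq_{\operatorname{c.o.i}}M_{n+1}$, the nuclearity of $M_{n+1}$, and the injectivity of the min tensor product together place $u$ inside $M_{n+1}\otimes_{\min}\cl{A}=M_{n+1}(\cl{A})$ as a positive $(n+1)\times(n+1)$ matrix of the specific form
\[
u=\begin{pmatrix} A_0 & B_1^* & \cdots & B_n^* \\ B_1 & C & & \\ \vdots & & \ddots & \\ B_n & & & C \end{pmatrix},\qquad A_0,C\in\cl{A}^+,\ B_i\in\cl{A}.
\]
Positivity of $u$ in $M_{n+1}(\cl{A})$ yields, after a harmless perturbation to make $A_0,C$ strictly positive, a Douglas factorization $B_i=C^{1/2}Y_iA_0^{1/2}$ with $(Y_1,\ldots,Y_n)^T$ a column contraction in $\cl{A}^n$.

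The next step is to exhibit, for each $\epsilon>0$, positive matrices $(P_{ij})\in M_k(\cl{E}_n)^+$ and $(Q_{ij})\in M_k(\cl{A})^+$ such that
\[
u+\epsilon\, 1_{\cl{E}_n}\otimes 1_{\cl{A}}=\sum_{i,j}P_{ij}\otimes Q_{ij},
\]
which by Lemma~\ref{PtensorQinmax} will place $u$ in $(\cl{E}_n\otimes_{\max}\cl{A})^+$. I would assemble the $P_{ij}$'s from two families of elementary positive elements of $\cl{E}_n$: the complementary projections $E_{00}$ and $\sum_{i=1}^n E_{ii}$, and the ``Paulsen-type'' positives
\[
\begin{pmatrix}\alpha & \bar{\lambda}\, e_k^{\,T} \\ \lambda\, e_k & \beta I_n\end{pmatrix},
\]
which lie in $\cl{E}_n^+$ whenever $\alpha\beta\geq|\lambda|^2$ and precisely carry the cross-terms $B_k\otimes E_{k0}+B_k^*\otimes E_{0k}$. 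The matching $Q_{ij}$'s are then built out of $A_0$, $C$, and the Douglas factors $Y_i$ so that the resulting sum reproduces the entries of $u$ up to the $\epsilon$-correction.

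The main obstacle is the combinatorial and algebraic choice of $(P_{ij})$ and $(Q_{ij})$. Since each $P_{ij}\in\cl{E}_n$ is forced to exhibit the scalar-diagonal structure in its lower-right $n\times n$ block, the off-diagonal coefficients $B_k$ cannot be distributed arbitrarily across the decomposition. The $\epsilon$-slack is essential to absorb the positivity defects that arise when splitting the Douglas factors among several matrix entries while keeping $(P_{ij})\in M_k(M_{n+1})^+$. Once the decomposition is engineered, verifying the identity and the positivity of $(P_{ij})$ and $(Q_{ij})$ reduces to Schur-complement computations inherited from the positivity of $u$ in $M_{n+1}(\cl{A})$.
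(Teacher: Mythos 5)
Your setup coincides with the paper's: reduce to $p=1$ via $M_p(\cl{E}_n\otimes_{\min}\cl{A})=\cl{E}_n\otimes_{\min}M_p(\cl{A})$, use injectivity of the min tensor product to regard a positive $u$ as a positive element of $M_{n+1}(\cl{A})$ of the constrained form, perturb by $\epsilon$ to make the diagonal invertible, and aim at the criterion of Lemma \ref{PtensorQinmax}. The difficulty is that everything after this point --- actually exhibiting $(P_{ij})\in M_k(\cl{E}_n)^+$ and $(Q_{ij})\in M_k(\cl{A})^+$ with $\sum_{i,j}P_{ij}\otimes Q_{ij}$ equal to the element in question --- \emph{is} the proof, and you defer it as ``the main obstacle'' without resolving it. A proof that stops where the difficulty begins is not a proof.

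Moreover, the building blocks you propose point in a direction that does not obviously close. You insist that each entry $P_{ij}$ be a \emph{positive} element of $\cl{E}_n$ (the projections $E_{00}$, $\sum_iE_{ii}$, or Paulsen-type blocks $\alpha E_{00}+\beta\sum_iE_{ii}+\lambda E_{k0}+\bar{\lambda}E_{0k}$), with the latter ``carrying'' the cross term $E_{k0}\otimes B_k$. But the $E_{k0}$-coefficient of such a block is a scalar $\lambda$, so the $(k,0)$ entry of $\sum_{i,j}P_{ij}\otimes Q_{ij}$ is a linear combination $\sum\lambda_{ij}Q_{ij}$; to produce a general $B_k\in\cl{A}$ you must lean on the off-diagonal $Q_{ij}$'s, and you must then still verify that your matrix of positive blocks is positive in $M_k(M_{n+1})$ --- none of which is done. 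The paper resolves exactly this tension by doing the opposite: after a Cholesky split of the element into a positive diagonal remainder plus the matrix with $\sum_ia_ib^{-1}a_i^*$ in the corner, it takes $P\in M_{n+1}(\cl{E}_n)$ with diagonal entries $\sum_iE_{ii}$ and $E_{00}$ and with the \emph{non-positive} matrix units $E_{j0}$, $E_{0j}$ in the first row and column; this $P$ is a sum of rank-one positives $v_kv_k^*$, and it is paired with $Q=\bigl(q_iq_j^*\bigr)$ for $q_0=b^{1/2}$, $q_i=a_ib^{-1/2}$, so positivity of both matrices and the exact identity (no $\epsilon$-slack needed at this stage) are immediate. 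Your Douglas factors $Y_i=C^{-1/2}B_iA_0^{-1/2}$ are essentially these $q_i$ in disguise, so the honest fix is to drop the requirement that the individual $P_{ij}$ be positive and write the two matrices down explicitly --- at which point you will have reproduced the paper's argument.
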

\begin{proof}
What we need to show is that $M_p(\cl{E}_n\otimes_{\min}\cl{A})^{+}\subseteq M_p(\cl{E}_n\otimes_{\max}\cl{A})^{+}$, for each $p\in \bb{N}$.
By the symmetry and associativity of the min and max tensor products of operator systems, and the nuclearity of $M_p$,  we have that
 \begin{align*}
M_p(\cl{E}_n\otimes_{\min}\cl{A})&=\cl{E}_n\otimes_{\min}M_p(\cl{A}) \\
M_p(\cl{E}_n\otimes_{\max}\cl{A})&=\cl{E}_n\otimes_{\max}M_p(\cl{A}).
\end{align*}
Notice that $M_p(\cl{A})$ is also a $C^*$-algebra, so it suffices to show that for each $A\in (\cl{E}_n\otimes_{\min}\cl{A})^{+}$, we have that $A\in (\cl{E}_n\otimes_{\max}\cl{A})^+$.

Since the $\min$ tensor product is injective, we have that $\cl{E}_n\otimes_{\min}\cl{A}\subseteq M_{n+1}\otimes_{\min} \cl{A}=M_{n+1}(\cl{A})$.
So for $A\in (\cl{E}\otimes_{\min}\cl{A})^{+}$, we know that it has the form
\[ A=\begin{pmatrix}
a_0&a_1&\cdots&a_n \\
a^*_1&b&&\\
\vdots&&\ddots&\\
a^*_n&&&b \\
\end{pmatrix}. \]
Without loss of generality, by considering $\epsilon I_{n+1}\otimes1_{\cl{A}}+A$,  we can assume that $a_0$ and $b$ are invertible.  According to Cholesky's lemma,
that
\begin{equation*} \begin{pmatrix}
a_0&a_1&\cdots&a_n \\
a^*_1&b&&\\
\vdots&&\ddots&\\
a^*_n&&&b \\
\end{pmatrix}=\begin{pmatrix}
a_0-\sum_{i=1}^na_ib^{-1}a_i^*&0&\cdots&0 \\
0&0&&\\
\vdots&&\ddots&\\
0&&&0
\end{pmatrix}+\begin{pmatrix}
\sum_{i=1}^na_ib^{-1}a^*_i&a_1&\cdots&a_n \\
a^*_1&b&&\\
\vdots&&\ddots&\\
a^*_n&&&b \\
\end{pmatrix}
\end{equation*}
The first matrix on the right side is positive in $M_{n+1}(\cl{A})$ and is easily seen to be in $(\cl{E}_n\otimes_{\max}\cl{A})^+$. What we need is to show that
the second matrix also lies in $(\cl{E}_n\otimes_{\max}\cl{A})^+$.

To this end, we use Proposition \ref{PtensorQinmax} and construct the following two matrices,
\[P=(P_{ij})=\begin{pmatrix}\begin{bmatrix}
0&0&\cdots&0 \\
0&1&&\\
\vdots&&\ddots&\\
0&&&1 \\
\end{bmatrix}& \begin{bmatrix}
0&0&\cdots&0 \\
1&0&&\\
\vdots&&\ddots&\\
0&&&0\\
\end{bmatrix}&\cdots&\begin{bmatrix}
0&0&\cdots&0 \\
0&0&&\\
\vdots&&\ddots&\\
1&&&0\\
\end{bmatrix}\\\begin{bmatrix}
0&1&\cdots&0 \\
0&0&&\\
\vdots&&\ddots&\\
0&&&0 \\
\end{bmatrix}&\begin{bmatrix}
1&0&\cdots&0 \\
0&0&&\\
\vdots&&\ddots&\\
0&&&0\\
\end{bmatrix}&&\\
\vdots&&\ddots&\\
\begin{bmatrix}
0&0&\cdots&1 \\
0&0&&\\
\vdots&&\ddots&\\
0&&&0\\
\end{bmatrix}&&&\begin{bmatrix}
1&0&\cdots&0 \\
0&0&&\\
\vdots&&\ddots&\\
0&&&0\\
\end{bmatrix}
\end{pmatrix}, \]

\[ Q=(Q_{ij})=\begin{pmatrix}
b&a^*_1&\cdots&a^*_n \\
a_1& &&  \\
\vdots&&B&  \\
a_n&&&
\end{pmatrix},  \]
where $B=(B_{ij})=(a_ib^{-1}a_j^*)$.

Then it is not hard to check that $P\in M_{n+1}(\cl{E}_n)^+$ and $Q\in M_{n+1}(\cl{A})^+$. Also, we have that
\[ \begin{pmatrix}
\sum_{i=1}^na_ib^{-1}a^*_i&a_1&\cdots&a_n \\
a^*_1&b&&\\
\vdots&&\ddots&\\
a^*_n&&&b \\
\end{pmatrix}=\sum_{i,j=1}^{n+1}P_{ij}\otimes Q_{ij}.   \]
This shows that the matrix on the left side is in $(\cl{E}_n\otimes_{\max}\cl{A})^+$, and  the lemma is proved.
\end{proof}

\begin{lemma}\label{positivelift}
Let $\cl{A}$ be a unital $C^*$-algebra, then we have that
$\operatorname{id}:\cl{S}_n\otimes_{\min}
\cl{A}\to\cl{S}_n\otimes_{\max} \cl{A}$ is completely positive  if and only if
 $\phi\otimes \operatorname{id}_{\cl{A}}:\cl{E}_n\otimes_{\min}\cl{A}\to \cl{S}_n\otimes_{\min}\cl{A}$ is a complete quotient map,
\end{lemma}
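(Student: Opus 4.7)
The plan is a short diagram chase using three already-established ingredients: the $C^*$-nuclearity of $\cl{E}_n$ (Proposition \ref{Enuclear}), the projectivity of the max tensor product (Proposition \ref{completequotientmapmaxtensor}), and the fact that $\phi$ itself is a complete quotient map from $\cl{E}_n$ onto $\cl{S}_n$. Observe first that the condition ``$\operatorname{id}: \cl{S}_n \otimes_{\min} \cl{A} \to \cl{S}_n \otimes_{\max} \cl{A}$ is completely positive'' is exactly the statement that these two operator systems coincide, since the reverse identity is always completely positive. Combining projectivity of max with $\phi$ being a complete quotient map, the map $\phi \otimes \operatorname{id}_{\cl{A}}: \cl{E}_n \otimes_{\max} \cl{A} \to \cl{S}_n \otimes_{\max} \cl{A}$ is automatically a complete quotient map, and by $C^*$-nuclearity of $\cl{E}_n$ its domain is identified with $\cl{E}_n \otimes_{\min} \cl{A}$. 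The only non-formal issue is therefore whether the codomain may be replaced by $\cl{S}_n \otimes_{\min} \cl{A}$.

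For the forward direction, assume $\cl{S}_n \otimes_{\min} \cl{A} = \cl{S}_n \otimes_{\max} \cl{A}$. The complete quotient map $\phi \otimes \operatorname{id}_{\cl{A}}: \cl{E}_n \otimes_{\max} \cl{A} \to \cl{S}_n \otimes_{\max} \cl{A}$ supplied by Proposition \ref{completequotientmapmaxtensor} becomes, after identifying max with min on the domain (Proposition \ref{Enuclear}) and on the codomain (the hypothesis), a complete quotient map $\cl{E}_n \otimes_{\min} \cl{A} \to \cl{S}_n \otimes_{\min} \cl{A}$, which is the desired conclusion.

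For the reverse direction, assume $\phi \otimes \operatorname{id}_{\cl{A}}: \cl{E}_n \otimes_{\min} \cl{A} \to \cl{S}_n \otimes_{\min} \cl{A}$ is a complete quotient map; it suffices to show $M_p(\cl{S}_n \otimes_{\min} \cl{A})^+ \subseteq M_p(\cl{S}_n \otimes_{\max} \cl{A})^+$ for every $p \in \bb{N}$. Fix such a positive $X$ and $\epsilon>0$. The quotient-lifting property produces a positive $Y_\epsilon \in M_p(\cl{E}_n \otimes_{\min} \cl{A})$ with $(\phi \otimes \operatorname{id}_{\cl{A}})^{(p)}(Y_\epsilon) = X + \epsilon \cdot 1_p$. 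By Proposition \ref{Enuclear}, $Y_\epsilon$ remains positive in $M_p(\cl{E}_n \otimes_{\max} \cl{A})$, so applying the completely positive map $\phi \otimes \operatorname{id}_{\cl{A}}: \cl{E}_n \otimes_{\max} \cl{A} \to \cl{S}_n \otimes_{\max} \cl{A}$ gives $X + \epsilon \cdot 1_p \in M_p(\cl{S}_n \otimes_{\max} \cl{A})^+$ for every $\epsilon>0$. Since $1$ is an Archimedean matrix order unit in $\cl{S}_n \otimes_{\max} \cl{A}$, letting $\epsilon \to 0^+$ yields $X \in M_p(\cl{S}_n \otimes_{\max} \cl{A})^+$, as desired. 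The only technical care needed is this appeal to approximate lifting together with the Archimedean property; the remainder is formal, so I do not anticipate a serious obstacle.
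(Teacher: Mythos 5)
Your proof is correct and follows essentially the same route as the paper: the same commuting square built from the $C^*$-nuclearity of $\cl{E}_n$ (Proposition \ref{Enuclear}), the projectivity of the max tensor product, and the fact that $\phi$ is a complete quotient map. If anything, your handling of the reverse direction via $\epsilon$-approximate lifts and the Archimedean property is slightly more careful than the paper's, which simply asserts that positive elements have positive pre-images under a complete quotient map.
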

\begin{proof}
 We have the following diagram:
\[ \xymatrix{\cl{E}_n\otimes_{\min} \cl{A} \ar[d]^{\phi\otimes \operatorname{id}_{\cl{A}}} \ar[r]^{\cong}&\cl{E}_n\otimes_{\max} \cl{A}\ar[d]^{\phi\otimes \operatorname{id}_{\cl{A}}} \\
 \cl{S}_n\otimes_{\min} \cl{A}\ar[r]^{\operatorname{id}}& \cl{S}_n\otimes_{\max} \cl{A}}.  \]
 By Proposition \ref{completequotientmapmaxtensor}, we have that
$\phi\otimes \operatorname{id}_{\cl{A}}:\cl{E}_n\otimes_{\max}\cl{A}\to \cl{S}_n\otimes_{\max}\cl{A}$ is a complete quotient map, and hence
if $\phi\otimes \operatorname{id}_{\cl{A}}:\cl{E}_n\otimes_{\min}\cl{A}\to \cl{S}_n\otimes_{\min}\cl{A}$ is a complete quotient map, then every positive element $A$ in
$M_p(\cl{S}_n\otimes_{\min} \cl{A})$ has a positive pre-image in $M_p(\cl{E}_n\otimes_{\min} \cl{A})$, and therefore $A$ is in $M_p(\cl{S}_n\otimes_{\max} \cl{A})$.
So $\operatorname{id}:\cl{S}_n\otimes_{\min} \cl{A}\to\cl{S}_n\otimes_{\max} \cl{A}$ is completely positive. Conversely, if
$\operatorname{id}:\cl{S}_n\otimes_{\min} \cl{A}\to\cl{S}_n\otimes_{\max} \cl{A}$ is completely positive, then $\cl{S}_n\otimes_{\min} \cl{A}=\cl{S}_n\otimes_{\max} \cl{A}$.
Also, we have $\cl{E}_n\otimes_{\min} \cl{A}=\cl{E}_n\otimes_{\max} \cl{A}$. Thus,
$\phi\otimes \operatorname{id}_{\cl{A}}:\cl{E}_n\otimes_{\max}\cl{A}\to \cl{S}_n\otimes_{\max}\cl{A}$ being a complete quotient map means that
$\phi\otimes \operatorname{id}_{\cl{A}}:\cl{E}_n\otimes_{\min}\cl{A}\to \cl{S}_n\otimes_{\min}\cl{A}$ is a complete quotient map.
\end{proof}
The next theorem is now immediate:
\begin{thm} Let $\cl A$ be a unital C*-algebra. Then $\cl O_n
  \otimes_{\min} \cl A= \cl O_n \otimes_{\max} \cl A$ if and only if
$\phi \otimes id: \cl E_n \otimes_{\min} \cl A \to \cl S_n
\otimes_{\min} \cl A$ is a complete quotient map.
\end{thm}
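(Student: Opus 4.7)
The plan is to string together the two equivalences that have just been established. The theorem immediately preceding this one states that $\cl{O}_n \otimes_{\min} \cl{A} = \cl{O}_n \otimes_{\max} \cl{A}$ is equivalent to $\cl{S}_n \otimes_{\min} \cl{A} = \cl{S}_n \otimes_{\max} \cl{A}$, while Lemma \ref{positivelift} reformulates the latter in terms of $\phi \otimes \operatorname{id}_{\cl{A}}$ being a complete quotient map. So I would simply compose these two equivalences.

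More concretely, the first step is to observe that the identity map from $\cl{S}_n \otimes_{\max} \cl{A}$ to $\cl{S}_n \otimes_{\min} \cl{A}$ is always completely positive (from $\min \leq \max$), so the equality $\cl{S}_n \otimes_{\min} \cl{A} = \cl{S}_n \otimes_{\max} \cl{A}$ is the same as the single assertion that $\operatorname{id}: \cl{S}_n \otimes_{\min} \cl{A} \to \cl{S}_n \otimes_{\max} \cl{A}$ is completely positive. Lemma \ref{positivelift} then matches this condition with the complete quotient property of $\phi \otimes \operatorname{id}_{\cl{A}}: \cl{E}_n \otimes_{\min}\cl{A} \to \cl{S}_n \otimes_{\min}\cl{A}$. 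Feeding this into the preceding theorem yields the desired biconditional.

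There is no real obstacle left at this stage; the substantive work has already been done. The hard direction of the preceding theorem relied on the dilation argument in Lemma \ref{operatorsystemimpliesnuclearity} together with the refined Bunce dilation used in Theorem \ref{nuclearityofCuntz}, and Lemma \ref{positivelift} itself rested on the projectivity of the max tensor product (Proposition \ref{completequotientmapmaxtensor}) combined with the $C^*$-nuclearity of $\cl{E}_n$ (Proposition \ref{Enuclear}). Given these inputs, the proof of the final theorem is a one-line composition of equivalences, which is why the statement is labeled as immediate.
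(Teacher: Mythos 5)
Your proof is correct and matches the paper's: the theorem is stated there as "now immediate" precisely because it is the composition of the Section 2 equivalence ($\cl O_n$ min $=$ max iff $\cl S_n$ min $=$ max) with Lemma \ref{positivelift}, exactly as you describe. Your observation that equality of the min and max structures on $\cl S_n \otimes \cl A$ reduces to the single condition that $\operatorname{id}:\cl S_n\otimes_{\min}\cl A\to\cl S_n\otimes_{\max}\cl A$ be completely positive is the right way to make the composition airtight.
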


We now prove a concrete condition on a unital C*-algebra that is equivalent
to the above condition. Given an operator system $\cl S$ we will write
{\bf $p >> 0$} provided that there exists $\epsilon >0$ such that $p -
\epsilon 1 \in \cl S^+$, and we set $\cl S^+_{-1}= \{ p \in \cl S: p >>0 \}.$ 

The reason for this notation is that if $\cl A$ is a unital C*-algebra and $\psi: \cl S \to \cl A$ is a
unital completely positive map, then  $p \in \cl S^+_{-1}$ implies $\psi(p)$ is positive and invertible in $\cl A.$ Moreover, $\cl S^+_{-1}$ is exactly the set of elements of $\cl S^+$ for which this is true for every unital completely positive map into a C*-algebra. Moreover, if $\psi: \cl T \to \cl S$ is a quotient map, then $p \in \cl S^+{-1}$ if and only if it has a pre-image, i.e., $\psi(r) =p$ with $r \in \cl T^+_{-1}$ (see \cite[Proposition 3.2]{FKP11}). 

\begin{thm}Let $\cl A$ be a unital C*-algebra. Then $\cl O_n
  \otimes_{\min} \cl A = \cl O_n \otimes_{\max} \cl A$ if and only if
  for all $p \in \bb N,$
whenever $I \otimes 1 + \sum_{j=1}^n S_j \otimes a_j + \sum_{j=1}^n
S_j^* \otimes a_j^* >>0$ in $\cl O_n \otimes_{\min} M_p(\cl A)$ then
there exists $a,b \in M_p(\cl A)^+_{-1}$ with $a+b = 1,$ such that
\[ \begin{bmatrix} a & a_1^* & \cdots &  a_n^*\\ a_1 & b & &
  \\ 
  \vdots &  & \ddots & \\ 
  a_n & &  & b   \end{bmatrix} \]
is in $M_{n+1}(M_p(\cl A))^+_{-1}.$
\end{thm}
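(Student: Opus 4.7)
The plan is to invoke the preceding theorem, which identifies $\cl O_n \otimes_{\min} \cl A = \cl O_n \otimes_{\max} \cl A$ with the assertion that $\phi \otimes \operatorname{id}_{\cl A} : \cl E_n \otimes_{\min} \cl A \to \cl S_n \otimes_{\min} \cl A$ is a complete quotient map. My task then reduces to translating the complete quotient property, at each matrix level $p$, into the concrete matrix lifting condition in the statement. I would work throughout with the identifications $M_p(\cl E_n \otimes_{\min} \cl A) = \cl E_n \otimes_{\min} M_p(\cl A)$ and $M_p(\cl S_n \otimes_{\min} \cl A) = \cl S_n \otimes_{\min} M_p(\cl A)$, and with the fact (from injectivity of the min tensor product) that $\cl E_n \otimes_{\min} M_p(\cl A)$ is a complete order subsystem of $M_{n+1}(M_p(\cl A))$.

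The first step is to observe that every self-adjoint element $v$ of $\cl E_n \otimes_{\min} M_p(\cl A)$ has the form $E_{00} \otimes a + (\sum_i E_{ii}) \otimes b + \sum_i E_{i0} \otimes a_i + \sum_i E_{0i} \otimes a_i^*$, i.e., it is exactly the block matrix in the statement, and that its strict positivity in $\cl E_n \otimes_{\min} M_p(\cl A)$ coincides with strict positivity in $M_{n+1}(M_p(\cl A))$, forcing in particular $a, b \in M_p(\cl A)^+_{-1}$. Applying $\phi$ termwise gives
\[ (\phi \otimes \operatorname{id})(v) = \tfrac{1}{2}\bigl[ I \otimes (a+b) + \textstyle\sum_j S_j \otimes a_j + \sum_j S_j^* \otimes a_j^* \bigr], \]
so $v$ is a pre-image of $\tfrac{1}{2}[ I \otimes 1 + \sum_j S_j \otimes a_j + \sum_j S_j^* \otimes a_j^*]$ precisely when $a + b = 1$. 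The forward direction is then immediate: if the complete quotient property holds, any strictly positive $T = I \otimes 1 + \sum_j S_j \otimes a_j + \sum_j S_j^* \otimes a_j^*$ gives a strictly positive $\tfrac{1}{2} T$, which therefore admits a strictly positive pre-image $v$, and by the discussion above $v$ is forced to have the required matrix form with $a + b = 1$ and $a, b \in M_p(\cl A)^+_{-1}$.

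For the reverse direction I must show the matrix lifting hypothesis forces $\phi \otimes \operatorname{id}$ to be a complete quotient map, i.e., every strictly positive $\tilde T \in \cl S_n \otimes_{\min} M_p(\cl A)$ lifts to a strictly positive element of $\cl E_n \otimes_{\min} M_p(\cl A)$. A generic such $\tilde T$ has the form $I \otimes x + \sum_j S_j \otimes a_j + \sum_j S_j^* \otimes a_j^*$ with $x$ self-adjoint. Producing a state $\omega$ on $\cl O_n$ with $\omega(S_j) = 0$ for every $j$ (for instance the canonical gauge-invariant tracial state, obtained by composing the unique trace on the UHF fixed-point subalgebra with the conditional expectation arising from the gauge action), the ucp map $\omega \otimes \operatorname{id}_{M_p(\cl A)}$ sends $\tilde T$ to $x$, yielding $x \in M_p(\cl A)^+_{-1}$. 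Conjugating $\tilde T$ by $1 \otimes x^{-1/2}$ produces a strictly positive element in the form covered by the hypothesis; the hypothesis then produces a strictly positive matrix pre-image, and conjugating back by $1 \otimes x^{1/2}$ (and rescaling by $2$ to absorb the factor of $\tfrac{1}{2}$ in $\phi$) gives the desired strictly positive pre-image of $\tilde T$. I expect the principal technical obstacle to be this normalization step: exhibiting the state on $\cl O_n$ annihilating each $S_j$, and checking that the bilateral module action by $1 \otimes x^{\pm 1/2}$ stays inside $\cl E_n \otimes_{\min} M_p(\cl A)$ and preserves strict positivity within it; once these are in place, the remainder of the translation is bookkeeping.
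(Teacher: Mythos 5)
Your proposal is correct and follows essentially the same route as the paper: both directions reduce to the preceding theorem identifying $\cl O_n\otimes_{\min}\cl A=\cl O_n\otimes_{\max}\cl A$ with $\phi\otimes\operatorname{id}$ being a complete quotient map, and the converse handles a general strictly positive $I\otimes x+\sum_j S_j\otimes a_j+\sum_j S_j^*\otimes a_j^*$ by conjugating with $1\otimes x^{-1/2}$ to normalize the identity coefficient, lifting, and conjugating back. The only (harmless) divergence is that you obtain invertibility of $x$ via a state on $\cl O_n$ annihilating the $S_j$, whereas the paper simply perturbs to $x+\epsilon 1$ and lifts $\tilde T+\epsilon(I\otimes 1)$ for every $\epsilon>0$.
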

\begin{proof} If $\cl O_n \otimes_{\min} \cl A = \cl O_n \otimes_{\max}
  \cl A$ then $q \otimes id: \cl E_n \otimes_{\min} M_p(\cl A) \to
  \cl S_n \otimes_{\min} M_p(\cl A)$ is a quotient map. Hence,
 $I \otimes 1 + \sum_{j=1}^n S_j \otimes a_j + \sum_{j=1}^n
S_j^* \otimes a_j^* \in q \otimes id \big( ( \cl E_n \otimes_{\min} M_p(\cl
A))^+ \big).$  Choosing any strictly positive element in the pre-image yields
the conclusion.

Conversely, the lifting formula shows that every element of the form
 $I \otimes 1 + \sum_{j=1}^n S_j \otimes a_j + \sum_{j=1}^n
S_j^* \otimes a_j^* >>0$ has a positive pre-image in $\cl E_n
\otimes_{\min} M_p(\cl A).$

Let $R=I \otimes r + \sum_{j=1}^n S_j
\otimes a_j + \sum_{j=1}^n S_j^* \otimes a_j^*$ be an arbitrary
element in $\cl S_n \otimes_{\min} M_p(\cl A)$ and let $\epsilon>0.$

Then
$T= I \otimes 1 + \sum_{j=1}^n S_j \otimes (r+\epsilon
1)^{-1/2}a_j(r+\epsilon)^{-1/2} + \sum_{j=1}^n S_j^* \otimes (r+
\epsilon 1)^{-1/2} a_j^*(r+ \epsilon 1)^{-1/2} >>0,$ and so by the
hypothesis has a lifting. Pre- and post-multiplying the entries of that lifting by
$(r+ \epsilon)^{1/2}$ gives a lifting of $R + \epsilon (I \otimes 1).$
This proves that the mapping $q \otimes id: \cl E_n \otimes_{\min}
M_p(\cl A) \to \cl S_n \otimes_{\min} M_p(\cl A)$ is a quotient map,
and since $p$ was arbitrary, this map is a complete quotient map.
\end{proof}

\begin{cor}\label{liftingcor} The C*-algebra $\cl O_n$ is nuclear if and only if
  whenever $\cl A$ is a unital C*-algebra and
$I \otimes 1 + \sum_{j=1}^n S_j \otimes a_j + \sum_{j=1}^n S_j^*
\otimes a_j^* >>0$ in $\cl O_n \otimes_{\min} \cl A$ there exists $a,b
\in \cl A^+_{-1}$ with $a+b=1$ such that
\begin{equation}\label{LP}  \begin{bmatrix} a & a_1^* & \cdots &  a_n^*\\ a_1 & b & &
  \\ 
  \vdots &  & \ddots & \\ 
  a_n & &  & b   \end{bmatrix} \end{equation}
is in $M_{n+1}(\cl A)^+_{-1}.$
\end{cor}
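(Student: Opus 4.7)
The plan is to derive this corollary directly from the preceding theorem, exploiting the fact that if the scalar ($p=1$) lifting property is assumed for \emph{every} unital C*-algebra, then one may apply it with $\cl A$ replaced by $M_p(\cl A)$ to recover the full matrix-level lifting property.

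For the forward direction, suppose $\cl O_n$ is nuclear, so $\cl O_n \otimes_{\min} \cl A = \cl O_n \otimes_{\max} \cl A$ for every unital C*-algebra $\cl A$. By the preceding theorem applied with $p=1$, the desired lifting property (\ref{LP}) holds in $\cl A$ for every such $\cl A$.

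For the converse, assume the scalar lifting property holds for every unital C*-algebra. Fix an arbitrary unital C*-algebra $\cl A$; our goal is to verify the hypothesis of the preceding theorem for $\cl A$, namely the $p$-matrix version of the lifting property. Given $p\in\bb N$, note that $M_p(\cl A)$ is itself a unital C*-algebra, and by the symmetry/associativity of $\otimes_{\min}$ together with nuclearity of $M_p$, one has the natural identification $\cl O_n \otimes_{\min} M_p(\cl A) \cong M_p(\cl O_n \otimes_{\min} \cl A)$. Thus any strictly positive element of the form $I\otimes 1 + \sum_j S_j \otimes a_j + \sum_j S_j^* \otimes a_j^*$ in $\cl O_n \otimes_{\min} M_p(\cl A)$ is precisely the sort of element to which the hypothesis applies (with $M_p(\cl A)$ in place of $\cl A$), producing the required lifting with entries in $M_p(\cl A)$. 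Hence the matrix-level lifting hypothesis of the preceding theorem is satisfied, so $\cl O_n \otimes_{\min} \cl A = \cl O_n \otimes_{\max} \cl A$. Since $\cl A$ was arbitrary, $\cl O_n$ is nuclear.

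There is essentially no obstacle beyond this bookkeeping; the corollary is a compactification of the preceding theorem, trading the quantifier ``for all $p$'' for the quantifier ``for all unital C*-algebras $\cl A$.'' The only mild subtlety is verifying that one really can reinterpret $\cl O_n \otimes_{\min} M_p(\cl A)$ as $M_p(\cl O_n \otimes_{\min} \cl A)$ completely order isomorphically so that strictly positive elements and liftings transfer faithfully between the two pictures, but this is immediate from properties of $\otimes_{\min}$ already invoked earlier in the paper.
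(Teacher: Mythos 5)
Your proof is correct and is exactly the argument the paper intends (the corollary is stated without proof as an immediate consequence of the preceding theorem): nuclearity gives the $p=1$ case for every $\cl A$, and conversely the universal quantifier over unital C*-algebras absorbs the quantifier over $p$ since $M_p(\cl A)$ is itself a unital C*-algebra, whose hypothesis is literally the theorem's $p$-level condition. The identification $\cl O_n \otimes_{\min} M_p(\cl A) \cong M_p(\cl O_n \otimes_{\min} \cl A)$ you worry about is not even needed, since the theorem's hypothesis is already phrased in terms of $\cl O_n \otimes_{\min} M_p(\cl A)$.
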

\begin{defn}
Let $\cl{A}$ be a unital $C^*$-algebra, then an $n$-tuple $(a_1,\dots,a_n)$ in $\cl{A}$ is called a \emph{dual row contraction} if 
\[ I \otimes 1 + \sum_{j=1}^n S_j \otimes a^*_j + \sum_{j=1}^n S_j^*
\otimes a_j \geq 0, \]
where the $S_i$'s are Cuntz isometries. Moreover, it is called a \emph{strict dual row contraction} if 
\[ I \otimes 1 + \sum_{j=1}^n S_j \otimes a^*_j + \sum_{j=1}^n S_j^*
\otimes a_j >>0. \]
\end{defn}
\begin{remark}
Note that a dual row contraction is a row contraction, since
\[ I \otimes 1 + \sum_{j=1}^n S_j \otimes a^*_j + \sum_{j=1}^n S_j^*
\otimes a_j \geq 0 \]
implies that 
\[ I \otimes 1 + z\sum_{j=1}^n S_j \otimes a^*_j + \bar{z}\sum_{j=1}^n S_j^*
\otimes a_j \geq 0, \quad \text{for all } z\in \bb{T}  ,\]
which is equivalent to 
\[ w(\sum_{j=1}^n S_j \otimes a^*_j)\leq \frac{1}{2} ,\]
where $w$ means the numerical radius. 
So, we have that
\[ \biggl\|\sum_{i=1}^na_ia_i^*\biggr\|=\biggl\|\biggl(\sum_{i=1}^nS_i\otimes_{\min}a_i^*\biggr)^*
\biggl(\sum_{i=1}^nS_i\otimes_{\min}a_i^*\biggr)\biggr\|\leq
(2 w(\sum_{j=1}^n S_j \otimes a^*_j))^2\leq 1.\]
But not every row contraction is a dual row contraction. A counter-example can be easily constructed. In particular, $n$ ($2\leq n<\infty$) Cuntz isometries form a row contraction but not dual row contraction, since $\sum_{i=1}^nS_i\otimes S_i^*$ is a unitary whose spectrum is the whole unit circle. 
\end{remark}

Again, since $\cl{O}_n$ is nuclear, Corollary \ref{liftingcor} is indeed a (strict) dual row contraction version of Ando's theorem (See \cite{AND73} for the original version). Moreover, when $\cl{M}$ is a von Neumann algebra, we can replace "strict dual row contraction" by "dual row contraction" and "strictly positive" by "positive" by taking weak*-limits. We summarize these statements below. This result is a dual row contraction version of Ando's theorem on numerical radius.
 
\begin{thm}\label{strongrowcontractionando}
Let  $\cl A$ be a unital C*-algebra and
$(a_1,\dots,a_n)\in \cl{A}$ be a strict dual row contraction, then there exists $a,b
\in \cl A^+_{-1}$ with $a+b=1$ such that
\begin{equation*}  \begin{bmatrix} a & a_1 & \cdots &  a_n\\ a_1^* & b & &
  \\ 
  \vdots &  & \ddots & \\ 
  a_n^* & &  & b   \end{bmatrix} \end{equation*}
is in $M_{n+1}(\cl A)^+_{-1}.$
Moreover, if $\cl{M}$ is a von Neumann algebra and $(a_1,\dots,a_n)\in \cl M$ is a dual row contraction, then there exists $a,b
\in \cl M^+$ with $a+b=1$ such that
\begin{equation*} \begin{bmatrix} a & a_1& \cdots &  a_n\\ a_1^* & b & &
  \\ 
  \vdots &  & \ddots & \\ 
  a_n^* & &  & b   \end{bmatrix} \end{equation*}
is in $M_{n+1}(\cl M)^+.$
\end{thm}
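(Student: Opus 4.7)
The first assertion, concerning strict dual row contractions, should follow essentially immediately from Corollary \ref{liftingcor} combined with the nuclearity of $\cl O_n$. The hypothesis that $(a_1,\dots,a_n)$ is a strict dual row contraction says exactly
\[ I\otimes 1+\sum_{j=1}^n S_j\otimes a_j^*+\sum_{j=1}^n S_j^*\otimes a_j\gg 0 .\]
Setting $c_j:=a_j^*$ recasts this as $I\otimes 1+\sum S_j\otimes c_j+\sum S_j^*\otimes c_j^*\gg 0$, which is precisely the hypothesis of Corollary \ref{liftingcor}. That corollary then produces $a,b\in\cl A^+_{-1}$ with $a+b=1$ such that the matrix \eqref{LP} (with the $c_j$'s) is in $M_{n+1}(\cl A)^+_{-1}$, and substituting back $c_j^*=a_j$ gives exactly the matrix claimed in the theorem. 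So this part is purely a relabeling of the corollary.

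For the von Neumann algebra case, the plan is a standard scaling-and-weak-$*$-limit argument. Given a (not necessarily strict) dual row contraction $(a_1,\dots,a_n)\in\cl M$, for each $\epsilon\in(0,1)$ I consider the scaled tuple $((1-\epsilon)a_1,\dots,(1-\epsilon)a_n)$. Since
\[ I\otimes 1+(1-\epsilon)\sum_{j=1}^n S_j\otimes a_j^*+(1-\epsilon)\sum_{j=1}^n S_j^*\otimes a_j=\epsilon(I\otimes 1)+(1-\epsilon)\Bigl(I\otimes 1+\sum_j S_j\otimes a_j^*+\sum_j S_j^*\otimes a_j\Bigr)\geq \epsilon(I\otimes 1), \]
this scaled tuple is a strict dual row contraction. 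Applying the first part gives $a_\epsilon,b_\epsilon\in\cl M^+_{-1}$ with $a_\epsilon+b_\epsilon=1$ and
\[ M_\epsilon:=\begin{bmatrix} a_\epsilon & (1-\epsilon)a_1 & \cdots & (1-\epsilon)a_n\\ (1-\epsilon)a_1^* & b_\epsilon & & \\ \vdots & & \ddots & \\ (1-\epsilon)a_n^* & & & b_\epsilon \end{bmatrix}\in M_{n+1}(\cl M)^+_{-1}. \]

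The final step is to extract weak-$*$ limits as $\epsilon\to 0$. Because $a_\epsilon,b_\epsilon\geq 0$ and $a_\epsilon+b_\epsilon=1$, both nets lie in the unit ball of $\cl M$; by Banach--Alaoglu I can pass to a subnet along which $a_\epsilon\to a$ and $b_\epsilon\to b$ weak-$*$ for some $a,b\in\cl M$. The positive cone $\cl M^+$ is weak-$*$ closed, so $a,b\geq 0$, and $a+b=1$ by weak-$*$ continuity of addition. Since $(1-\epsilon)a_j\to a_j$ in norm and $a_\epsilon,b_\epsilon$ converge weak-$*$, the matrices $M_\epsilon$ converge entrywise weak-$*$ to the matrix in the theorem. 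As $M_{n+1}(\cl M)\cong M_{n+1}\otimes \cl M$ is itself a von Neumann algebra whose weak-$*$ topology agrees with entrywise weak-$*$ convergence, and since the positive cone is weak-$*$ closed in any von Neumann algebra, the limit matrix lies in $M_{n+1}(\cl M)^+$.

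The only place I expect any care is required is the weak-$*$ limit step: one must verify that entrywise weak-$*$ convergence really gives weak-$*$ convergence in $M_{n+1}(\cl M)$ so that weak-$*$ closedness of the positive cone can be invoked. This is standard for finite matrix amplifications of a von Neumann algebra, so no genuine obstacle arises.
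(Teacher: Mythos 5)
Your proposal is correct and follows exactly the route the paper takes: the strict case is Corollary \ref{liftingcor} (via the nuclearity of $\cl O_n$) after the relabeling $c_j = a_j^*$, and the von Neumann algebra case is obtained by the same scaling-by-$(1-\epsilon)$ trick and a weak-$*$ limit of the bounded nets $a_\epsilon, b_\epsilon$, using weak-$*$ closedness of the positive cone of $M_{n+1}(\cl M)$. The paper only sketches this in the remark preceding the theorem; your write-up supplies the same argument in full detail.
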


\section{An Alternative Proof of the Nuclearity of $\cl{O}_n$}
We now  give anew proof of the nuclearity of $\cl O_n$,
by showing directly the existence of operators $a,b$ mentioned in Corollary \ref{liftingcor}, which will prove that $\cl O_n$ is nuclear.

To this end, we shall need the notion of "shorted operators", which was introduced in \cite{AT75}. Here, we briefly quote some results we will need in our proof.
\begin{defn}\cite{AT75}
Let $\cl{H}$ be a Hilbert space and $A\in B(\cl{H})$. Assume $S\subseteq \cl{H}$ is a closed subspace, then the \emph{shorted operator of} $A$ with respect to $S$, denoted as $S(A)$ is defined as the maximum of the following set:
\[ \{T\in B(\cl{H}):T\leq A, \Ran T\subseteq S \}  .\]
Also, we denote $S_0(A)=S(A)|_S$. 
\end{defn}

 The shorted operator always exists \cite[Theorem 1]{AT75}. Moreover, we have that
 \begin{prop}\cite{AT75}\label{shortedinner}
 For each $x\in S$, we have that 
\[ \inner{S_0(A)x}{x}=\inf\biggl\{\Inner{A\begin{pmatrix}x\\y\end{pmatrix}}{\begin{pmatrix}x\\y\end{pmatrix}}:y\in S^{\perp} \biggr\}  .\]
\end{prop}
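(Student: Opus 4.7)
The plan is to use the block decomposition $\cl{H} = S \oplus S^\perp$ to identify $S_0(A)$ with the (generalized) Schur complement of $A$. Assuming $A \geq 0$ (standard for the shorted operator to be well-defined), write
\[ A = \begin{pmatrix} A_{11} & A_{12} \\ A_{12}^* & A_{22} \end{pmatrix}, \]
so that the expression inside the infimum expands as
\[ g(x,y) := \inner{A_{11}x}{x} + 2\operatorname{Re}\inner{A_{12}y}{x} + \inner{A_{22}y}{y}, \]
and the right-hand side of the claimed identity is $f(x) := \inf_{y \in S^\perp} g(x,y)$.

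First I would construct a bounded positive operator $T \in B(S)$ whose quadratic form equals $f$. When $A_{22}$ is invertible, completing the square in $y$ yields the unique minimizer $y_0 = -A_{22}^{-1} A_{12}^* x$ and the value $f(x) = \inner{(A_{11} - A_{12} A_{22}^{-1} A_{12}^*) x}{x}$, so $T$ is the classical Schur complement. For general $A_{22}$ I would regularize by working with $A + \epsilon P_{S^\perp}$ and the operators $T_\epsilon := A_{11} - A_{12}(A_{22}+\epsilon I)^{-1} A_{12}^*$; these are monotone decreasing in $\epsilon$ and bounded below by $0$, so their SOT limit $T$ exists, is bounded and positive, and one checks directly that $\inner{Tx}{x} = f(x)$.

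Next, I would extend $T$ to $\tilde T \in B(\cl{H})$ by setting $\tilde T |_{S^\perp} = 0$, so that in block form $\tilde T$ has $T$ in the $(1,1)$ corner and zeros elsewhere. The inequality $\tilde T \leq A$ is immediate from
\[ \inner{\tilde T(x+y)}{x+y} = \inner{Tx}{x} = f(x) \leq g(x,y) = \inner{A(x+y)}{x+y}, \]
and $\Ran \tilde T \subseteq S$ is plain. To conclude $\tilde T = S(A)$ (and hence $T = S_0(A)$), I would verify maximality: any $T' \in B(\cl{H})$ with $T' \leq A$ and $\Ran T' \subseteq S$ is forced to be self-adjoint (since $A - T' \geq 0$), so $S^\perp \subseteq \Ker T'$; its $(1,1)$-block $T''$ then satisfies $\inner{T''x}{x} \leq g(x,y)$ for every $y \in S^\perp$, hence $\inner{T''x}{x} \leq f(x) = \inner{Tx}{x}$, giving $T' \leq \tilde T$.

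The main obstacle will be the construction step when $A_{22}$ is not invertible, namely verifying that the decreasing family $T_\epsilon$ has quadratic form converging to the pointwise infimum $f(x)$. The inequality $\inner{Tx}{x} \leq f(x)$ is straightforward from monotone convergence on the level of quadratic forms, but the reverse inequality is more delicate and requires the observation that, for a positive block matrix, $\Ran A_{12} \subseteq \overline{\Ran A_{22}^{1/2}}$ (a consequence of the Douglas factorization), which prevents one from gaining anything by choosing $y$ in the null space of $A_{22}$.
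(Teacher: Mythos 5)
Your argument is correct, but note that the paper offers no proof of this proposition to compare against: it is quoted verbatim from Anderson--Trapp \cite{AT75}, and your construction (shorted operator as the strong limit of the Schur complements $T_\epsilon = A_{11}-A_{12}(A_{22}+\epsilon I)^{-1}A_{12}^*$ of the regularized operators, followed by the maximality check) is essentially the one in that reference. Two small corrections. First, the ``main obstacle'' you flag at the end is not actually an obstacle: completing the square for $A+\epsilon P_{S^\perp}$ gives the exact identity $\inner{T_\epsilon x}{x}=\inf_{y\in S^{\perp}}\bigl(g(x,y)+\epsilon\|y\|^2\bigr)$, from which $\inner{T_\epsilon x}{x}\ge f(x)$ for every $\epsilon>0$ (hence $\inner{Tx}{x}\ge f(x)$), while for each fixed $y$ one has $\inner{T_\epsilon x}{x}\le g(x,y)+\epsilon\|y\|^2\to g(x,y)$, giving $\inner{Tx}{x}\le f(x)$; no Douglas-factorization range argument is needed. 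Second, the range inclusion you invoke is misindexed: for a positive block operator it is $\Ran A_{12}^*\subseteq\overline{\Ran A_{22}^{1/2}}$ (equivalently $\Ran A_{21}\subseteq\overline{\Ran A_{22}^{1/2}}$) that holds, whereas $\Ran A_{12}$ lies in $\overline{\Ran A_{11}^{1/2}}$. Neither point affects the validity of your outline, and the maximality step identifying $\tilde T$ with $S(A)$ is carried out correctly.
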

We now prove  that the condition of Corollary \ref{liftingcor} is met for $n=2$ without using the nuclearity of $\cl{O}_2$. 
\begin{proof}[A proof of the nuclearity of $\cl{O}_2$]

 Let $\cl{A}\subseteq B(\cl{H})$ be a unital $C^*$-algebra and let $(a_1,\dots,a_n)\in \cl{A}$ be a strict dual row contraction, that is, 
\[ A=I \otimes 1 + \sum_{j=1}^2 S_j \otimes a^*_j + \sum_{j=1}^2 S_j^*
\otimes a_j >>0,  \text{ in }\cl O_2 \otimes_{\min} \cl A .\] By Corollary 3.3 in \cite{ZHE14} the operator system spanned by the Toeplitz-Cuntz isometries is completely order isomorphic to the operator system spanned by the Cuntz isometries.  Thus, we can take the $S_i$'s to be Toeplitz-Cuntz isometries. Moreover, it suffices to consider the following specific choice of Toeplitz-Cuntz isometries: 
\[ S_i\in B(l^2), \quad S_i(e_k)=e_{kn+i},\quad k=0,1,2,\dots, \quad i=1,2 ,\]
where $\{e_i:i=0,1,2,\dots\}$ is the orthonormal basis of $l^2$.

We write $l^2 \otimes \cl H = \oplus_{i=0}^{+\infty} \cl H_i,$ where $\cl H_i = \cl H$ for all $i$.
Thus, $A$ corresponds to the following operator in $B(l^2 \otimes \cl H)$, 
\[ A=\begin{pmatrix}
1&a_1&a_2&0&0&0&0&0&0&\cdots \\
a^*_1&1&0&a_1&a_2&0&0&0&0&\cdots\\
a^*_2&0&1&0&0&a_1&a_2&0&0&\cdots\\
0&a_1^*&0&1&0&0&0&a_1&a_2&\cdots \\
0&a_2^*&0&0&1&0&0&0&0&\cdots\\
0&0&a_1^*&0&0&1&0&0&0&\cdots\\
0&0&a_2^*&0&0&0&1&0&0&\cdots\\
0&0&0&a_1^*&0&0&0&1&0&\cdots\\
0&0&0&a_2^*&0&0&0&0&1&\cdots\\
\vdots&\vdots&\vdots&\vdots&\vdots&\vdots&\vdots&\vdots&\vdots&\\
\end{pmatrix}.
\]
Set $\cl R_k = \oplus_{i=k}^{+\infty} \cl H_i.$
We then write $A$ as the following block form:
\[ A=\begin{pmatrix}
A_{11}&A_{12}\\
A_{21}&A_{22}
\end{pmatrix},\]
where $A_{11}\in B(\cl{H}_0)$, $A_{12}\in B(\cl R_1,\cl{H}_0)$, $A_{21}\in B(\cl{H}_0,\cl R_1)$, $A_{22}\in B(\cl R_1)$.\\
Now, let $B=\cl{H}_0(A)$, then by Proposition \ref{shortedinner}, we have that 
\begin{align*}
\inner{Bh_0}{h_0}&=\inf_{g\in \cl R_1}\biggl\{\Inner{A\begin{pmatrix}
h_0\\
g
\end{pmatrix}}{\begin{pmatrix}
h_0\\
g
\end{pmatrix}}\biggr\}\\
&=\inf_{h_1\in\cl{H}}\inf_{h_2\in\cl{H}}\inf_{z\in \cl R_3}\biggl\{\Inner{A\begin{pmatrix}
h_0\\
h_1\\
h_2\\
z
\end{pmatrix}}{\begin{pmatrix}
h_0\\
h_1\\
h_2\\
z
\end{pmatrix}}\biggr\} \\
&=\inf_{h_1\in\cl{H}}\inf_{h_2\in\cl{H}}\inf_{z\in \cl R_3}\biggl\{\inner{h_0+a_1h_1+a_2h_2}{h_0}+\inner{a_1^*h_0}{h_1}+\inner{a_2^*h_0}{h_2}+\Inner{A_{22}\begin{pmatrix}
h_1\\
h_2\\
z
\end{pmatrix}}{\begin{pmatrix}
h_1\\
h_2\\
z
\end{pmatrix}}\biggr\}
\end{align*}
We claim that
\[ \inf_{z \in \cl R_3} \biggl\{ \Inner{A_{22}\begin{pmatrix}
h_1\\
h_2\\
z
\end{pmatrix}}{\begin{pmatrix}
h_1\\
h_2\\
z
\end{pmatrix}}\biggr\} = \inner{Bh_1}{h_1} + \inner{Bh_2}{h_2} .\]

Assuming this claim for the moment, we have
\begin{align*} \inner{Bh_0}{h_0} =& 
\inf_{h_1\in\cl{H}}\inf_{h_2\in\cl{H}}\{\inner{h_0}{h_0}+\inner{a_1f}{h_0}+\inner{a_2h_2}{h_0}+\inner{a_1^*h_0}{h_1}+\inner{a_2^*h_0}{h_2}\\
&+\inner{Bh_1}{h_1}+\inner{Bh_2}{h_2} \} \\
 =&\inf_{h_1\in\cl{H}}\inf_{h_2\in\cl{H}}\biggl\{\Inner{\begin{pmatrix}
1&a_1&a_2\\
a_1^*&B&0 \\
a_2^*&0&B
\end{pmatrix}\begin{pmatrix}
h_0\\
h_1\\
h_2
\end{pmatrix} }{\begin{pmatrix}
h_0\\
h_1\\
h_2
\end{pmatrix}}\biggr\}.
\end{align*}
So we have that 
\[ \begin{pmatrix}
1-B&a_1&a_2\\
a_1^*&B&0 \\
a_2^*&0&B
\end{pmatrix}\geq 0.\]

To justify the claim, we write $\bb N$ as the disjoint union of $N_1 = \{1+2(2^k-1), 1+3(2^k-1) : k \ge 0\}$ and $N_2= \{ 2+3(2^k-1). 2+4(2^k-1): k \ge 0 \}$. Set  $\cl N_k = \oplus_{i \in N_k} \cl H_i$ so that $\cl R_1 = \cl N_1 \oplus \cl N_2$. Observe that both of these subspaces are reducing for $A_{22}$ and that with respect to the obvious identification of $\cl N_k \sim \oplus_{i=0}^{+\infty} \cl H_i$ we have that $A_{22} \sim A \oplus A$.

 
 Hence, 
 \begin{align*}
\inf_{z\in \cl R_3}\Inner{A_{22}\begin{pmatrix}
h_1\\
h_2\\
z
\end{pmatrix}}{\begin{pmatrix}
h_1\\
h_2\\
z
\end{pmatrix}}&=\inf_{z_1\in \cl N_1 \ominus \cl H_1}\Inner{A_{22}\begin{pmatrix}
h_1\\
0\\
z_1
\end{pmatrix}}{\begin{pmatrix}
h_1\\
0\\
z_1
\end{pmatrix}}+\inf_{z_2\in \cl N_2 \ominus \cl H_2}\Inner{A_{22}\begin{pmatrix}
0\\
h_2\\
z_2
\end{pmatrix}}{\begin{pmatrix}
0\\
h_2\\
z_2
\end{pmatrix}}\\
&=\inf_{z\in \cl R_1}\Inner{A\begin{pmatrix}
h_1\\
z
\end{pmatrix}}{\begin{pmatrix}
h_1\\
z
\end{pmatrix}}+
\inf_{z\in \cl R_1}\Inner{A\begin{pmatrix}
h_2\\
z
\end{pmatrix}}{\begin{pmatrix}
h_2\\
z
\end{pmatrix}}\\
&=\inner{Bh_1}{h_1}+\inner{Bh_2}{h_2}.
 \end{align*}

It remains to show that $B\in \cl{A}$.  Since $-S_i$'s are also Cuntz isometries, we have that
\[I \otimes 1 - \sum_{j=1}^2 S_j \otimes a^*_j - \sum_{j=1}^2 S_j^*
\otimes a_j >>0 .\]
It follows that $\|\sum_{j=1}^2 S_j \otimes a^*_j - \sum_{j=1}^2 S_j^*\otimes a_j\|<1$, and therefore $\|1-A_{22}\|<1$. According to the proof of \cite[Theorem 1]{AT75}, the shorted operator $B$ has an explicit formula: $B=A_{11}-A_{12}A_{22}^{-1}A_{21}$. So what left for us to show is that all the entries of $A_{22}^{-1}$ are in $\cl{A}$. To see this, we first use the Neumann series to write $A_{22}^{-1}=\sum_{n=0}^\infty(1-A_{22})^n$. Since each row and column of $1-A_{22}$ only has finitely many nonzero entries, we must have that the entries of $(1-A_{22})^n$ are in $\cl{A}$ for each $n\in \bb{N}$. Since the Neumann series is norm convergent, we have that each entry of $A_{22}^{-1}$ is in $\cl A$ and  since $A_{12}$ and $A_{21}$ are only non-zero in finitely many entries, $B \in \cl{A}$.  

Finally, we can repeat the above process for $A-\epsilon 1\otimes 1>> 0$ and see that we can make both $B$  and $1-B$ strictly  positive with
\[ \begin{pmatrix}
1-B&a_1&a_2\\
a_1^*&B&0 \\
a_2^*&0&B
\end{pmatrix}>>0 .\]
\end{proof}


The proof that $\cl O_n$ for $n \ge 3$ is nuclear can be done similarly and we only sketch the key points. Let $\cl{A}\subseteq B(\cl{H})$ be a unital $C^*$-algebra and $(a_1,\dots,a_n)\in \cl{A}$ be a strict dual row contraction, that is, 
\[ A=I\otimes 1 + \sum_{j=1}^n S_j \otimes a^*_j + \sum_{j=1}^n S_j^*
\otimes a_j >>0,  \text{ in }\cl O_n \otimes_{\min} \cl A .\] Then, by Corollary 3.3 in \cite{ZHE14}, we can take $S_i$'s as Toeplitz-Cuntz isometries. Moreover, it suffices to consider the following specific choice of Toeplitz-Cuntz isometries: 
\[ S_i\in B(l^2), \quad S_i(e_k)=e_{kn+i},\quad k=0,1,2,\dots, \quad i=1,2,\dots,n ,\]
where $\{e_i:i=0,1,2,\dots\}$ is an orthonormal basis of $l^2$.
Thus, $A$ corresponds to the following operator on $B(\cl{H}^{(\infty)})$, 
\[ A=\begin{pmatrix}
1&a_1&\cdots&a_n& &&\\
a^*_1&1&&&a_1&\cdots&a_n&\\
\vdots&&\ddots&&&&\\
a^*_n&&&1&&&&\\
&a^*_1&&&1&&&&\\
&\vdots&&&&\ddots&\\
&a^*_n&&&&&1&\\
&&&&&&&\ddots

\end{pmatrix}.
\]
Again we  write $A$ as the  block form:
\[ A=\begin{pmatrix}
A_{11}&A_{12}\\
A_{21}&A_{22}
\end{pmatrix},\]
where $A_{11}=1\in B(\cl{H})$, $A_{12}\in B(\cl{H}^{(\infty)}\ominus \cl{H},\cl{H})$, $A_{21}\in B(\cl{H},\cl{H}^{(\infty)}\ominus \cl{H})$, $A_{22}\in B(\cl{H}^{(\infty)})$.

Again we take $B= \cl H_0(A),$ the short of the operator $A$ to the 0-th subspace.
The calculation of $B$ proceeds as before, and in this case one shows that $\bb N$ decomposes into a disjoint union of $n$ subsets, $\bb N= N_1 \cup \cdots \cup N_n$ such that $\cl N_j= \oplus_{i \in N_j} \cl H_j$ is a reducing subspace for $A_{22}$ with the restriction to $\cl N_j$ of $A_{22}$ unitarily equivalent to $A$.

We leave the remaining details to the interested reader.


Having an alternative proof of the nuclearity of $\cl{O}_n$ ($2\leq n<\infty$), we can give an alternative proof of the nuclearity of $\cl{O}_\infty$ with just a little effort. Here is the proof. 
\begin{thm}
$\cl{O}_\infty$ is nuclear.
\end{thm}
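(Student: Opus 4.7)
The plan is to realize $\cl{O}_\infty$ as an inductive limit of nuclear $C^*$-subalgebras, which reduces the infinite-generator case to the finite case just established. For each $N \geq 1$, set $\cl{B}_N = C^*(I, S_1, \ldots, S_N) \subseteq \cl{O}_\infty$. Since every $*$-polynomial in the generators of $\cl{O}_\infty$ involves only finitely many $S_i$'s, we have the increasing union $\cl{O}_\infty = \overline{\bigcup_{N=1}^\infty \cl{B}_N}$, so it suffices to show each $\cl{B}_N$ is nuclear and then pass to the limit.

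I would next argue that each $\cl{B}_N$ is nuclear. Inside $\cl{O}_\infty$ the tuple $(S_1, \ldots, S_N)$ consists of isometries with pairwise orthogonal ranges, and therefore satisfies the defining relations of the Toeplitz--Cuntz algebra $\cl{TO}_N$. The universal property of $\cl{TO}_N$ then yields a surjective $*$-homomorphism $\cl{TO}_N \to \cl{B}_N$, so since quotients of nuclear $C^*$-algebras are nuclear, it is enough to show $\cl{TO}_N$ itself is nuclear. For this I would invoke the Cuntz short exact sequence $0 \to \bb{K} \to \cl{TO}_N \to \cl{O}_N \to 0$: the ideal $\bb{K}$ is nuclear, the quotient $\cl{O}_N$ is nuclear by the alternative proof just given for $2 \le n < \infty$, and nuclearity is preserved under extensions. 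Hence $\cl{TO}_N$, and therefore $\cl{B}_N$, is nuclear.

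Finally, for any unital $C^*$-algebra $\cl{A}$, nuclearity of each $\cl{B}_N$ gives $\cl{B}_N \otimes_{\min} \cl{A} = \cl{B}_N \otimes_{\max} \cl{A}$. Passing to the inductive limit yields $\cl{O}_\infty \otimes_{\min} \cl{A} = \cl{O}_\infty \otimes_{\max} \cl{A}$, which is the desired nuclearity.

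The only point requiring care, and presumably the ``little effort'' to which the author alludes, is verifying that the min and max tensor norms behave compatibly under norm-closure of an increasing union. This is standard: injectivity of the min tensor product realizes $\cl{B}_N \otimes_{\min} \cl{A} \subseteq_{\operatorname{c.o.i}} \cl{O}_\infty \otimes_{\min} \cl{A}$, while continuity of the canonical map $\cl{O}_\infty \otimes_{\max} \cl{A} \to \cl{O}_\infty \otimes_{\min} \cl{A}$ together with density of the algebraic tensor product $\bigcup_N \cl{B}_N \odot \cl{A}$ in both tensor products shows that equality on the union forces equality on the completion. This is the main (mild) obstacle, and it is routine.
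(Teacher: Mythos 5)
Your argument is correct, but it takes a genuinely different route from the paper's. You work at the $C^*$-algebra level: write $\cl{O}_\infty$ as the closed increasing union of $\cl{B}_N = C^*(I,S_1,\dots,S_N)$, realize each $\cl{B}_N$ as a quotient of $\cl{TO}_N$, deduce nuclearity of $\cl{TO}_N$ from the extension $0\to\bb{K}\to\cl{TO}_N\to\cl{O}_N\to 0$ together with the finite case, and pass to the inductive limit. The paper instead stays at the operator system level: by Lemma \ref{operatorsystemimpliesnuclearity} it suffices to show that $\cl{S}_\infty$ is $C^*$-nuclear, and since the operator system tensor product is the algebraic (uncompleted) tensor product, every element of $\cl{S}_\infty\otimes_{\min}\cl{A}$ involves only finitely many generators, hence lies in $\cl{S}_N\otimes_{\min}\cl{A}$ for some finite $N$; the finite case then places it in $(\cl{S}_N\otimes_{\max}\cl{A})^+\subseteq(\cl{S}_\infty\otimes_{\max}\cl{A})^+$. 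That argument needs only injectivity of $\min$ and functoriality of $\max$, whereas yours invokes two nontrivial permanence properties of nuclearity --- closure under quotients and under extensions --- the first of which (Choi--Effros/Connes--Haagerup) is far heavier machinery than anything else in the paper and somewhat against the spirit of its ``elementary alternative proof'' program. Both of these hammers are avoidable within your framework: since $\sum_{i=1}^N S_iS_i^*\neq I$ in $\cl{O}_\infty$, Cuntz's uniqueness theorem gives $\cl{B}_N\cong\cl{TO}_N$ outright (no quotient permanence needed), and the extension step can be replaced by the explicit three-by-two diagram chase the paper already performs in the proof of Corollary \ref{OnimpliesSn}. Your closing discussion of why min and max agree on the completion of $\bigcup_N \cl{B}_N\odot\cl{A}$ is the right verification and is handled adequately.
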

\begin{proof}
Using Lemma \ref{operatorsystemimpliesnuclearity}, we just need to show that $\cl{S}_\infty$ is $C^*$-nuclear. 

It suffices to show that  $(\cl{S}_\infty\otimes_{\min}\cl{A})^+= (\cl{S}_\infty\otimes_{\max}\cl{A})^+$ for every unital $C^*$-algebra $\cl{A}$.

To see this, we choose $A\in (\cl{S}_\infty\otimes_{\min}\cl{A})^+$, then $A$ has the form,
\[ A= I\otimes X+\sum_{i\in F}S_i\otimes X_i+\sum_{i\in F}S^*_i\otimes X_i^*,\quad X_i\in \cl{A} , \]
where $F$ is a finite subset of $\bb{N}$. So there exists $N\in \bb{N}$, such that $F\subseteq \{1,\dots,N\}$. This means, by the injectivity the $\min$ tensor product, we
have that $A\in (\cl{S}_N\otimes_{\min}\cl{A})^+$.

But we have just shown that $\cl{S}_n$ is $C^*$-nuclear for $n$ finite, so we have that
\begin{align*}
 A\in (\cl{S}_N\otimes_{\min}\cl{A})^+
  =(\cl{S}_N\otimes_{\max}\cl{A})^+
  \subseteq(\cl{S}_\infty\otimes_{\max}\cl{A})^+.
\end{align*}
Thus, we know that $\cl{S}_\infty\otimes_{\min}\cl{A}=\cl{S}_\infty\otimes_{\max}\cl{A}$.
\end{proof}

\section{The dual operator system of $\cl{S}_n$}
In this section, we prove some properties of the dual operator system of $\cl{S}_n$, denoted by $\cl{S}_n^d$, which is the operator system consisting of all (bounded) linear
functionals on $\cl{S}_n$. \\
First, we choose a basis for $\cl{S}_n^d$ as the following,
\[\{ \delta_0, \delta_i,\delta_i^*: 1\leq i\leq n \}, \]
where
\begin{align*}
  \delta_0(I):=1, \delta_0(S_i):=\delta_0(S_i^*)=0, \quad \text{for all $i$} ; \\
  \delta_i(I)=0, \delta_i(S_j)=\delta_{ij}, \delta_i(S_k^*)=0 , \quad \text{for all $k$}; \\
  \delta^*_i(I)=0, \delta^*_i(S^*_j)=\delta_{ij}, \delta_i(S_k)=0 , \quad \text{for all $k$},
\end{align*}
where $\delta_{ij}$ is the Kronecker delta notation. So we have $\cl{S}_n^d=\Span\{ \delta_0, \delta_i,\delta_i^*: 1\leq i\leq n \}$.

Then, we define an order structure on $\cl{S}_n^d$ by
\[ (f_{ij})\in M_p(\cl{S}_n^d)^+ \Leftrightarrow (f_{ij}):\cl{S}_n\to M_p \text{ is completely positive }  . \]
It is a well-known result by Choi and Effros \cite[Theorem 4.4]{CE77} that with the order structure defined above,  the dual space of a finite dimensional operator system is again
an operator system with an Archimedean order unit, and indeed,  any strictly positive linear functional is an Archimedean order unit.

We claim that $\delta_0$ is strictly positive. To see this suppose that $p \in \cl S_n^+$ with $\delta_0(p) =0.$  Then $p = \sum_{i=1}^n a_i S_i + \sum_{i=1}^n \overline{a_i} S_i^*,$  Using the fact that, if $S_i$ are Cuntz isometries, then $-S_i$ are also Cuntz isometries, we see that $-p \in \cl S_n^+.$  Thus, $p=0.$

Hence, $\cl{S}_n^d$ is an operator system with Archimedean order unit $\delta_0$.

The following characterizes  positive elements in $M_p(\cl{S}_n^d)$  of the form  $I_p\otimes \delta_0+\sum_{i=1}^nA_i\otimes \delta_i+\sum_{i=1}^nA_i^*$.
\begin{prop}\label{positiveindual}
 An element $I_p\otimes \delta_0+\sum_{i=1}^nA_i\otimes \delta_i+\sum_{i=1}^nA_i^*\otimes \delta_i^*\in M_p(\cl{S}_n^d)$ is positive if and only if
 $(A_1,\dots,A_n)$ is a row contraction.
\end{prop}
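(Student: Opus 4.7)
The plan is to unwind the definition of positivity in $M_p(\cl{S}_n^d)$ via the canonical identification $M_p(\cl{S}_n^d) \cong \mathrm{CB}(\cl{S}_n, M_p)$. Under this identification the element
\[ F := I_p\otimes \delta_0+\sum_{i=1}^n A_i\otimes \delta_i+\sum_{i=1}^n A_i^\ast\otimes \delta_i^\ast \]
corresponds to the unital linear map $\Phi : \cl{S}_n \to M_p$ defined by $\Phi(I) = I_p$, $\Phi(S_i) = A_i$, $\Phi(S_i^\ast) = A_i^\ast$. By the very definition of the dual order, $F \in M_p(\cl{S}_n^d)^+$ if and only if $\Phi$ is completely positive, which (since $\Phi$ is unital) is equivalent to $\Phi$ being UCP. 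So the proposition reduces to: \emph{$\Phi$ is UCP if and only if $(A_1,\dots,A_n)$ is a row contraction.}

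For the forward direction, I would apply Arveson's extension theorem to lift $\Phi$ to a UCP map $\widetilde{\Phi} : \cl{O}_n \to M_p$, then invoke Stinespring's dilation theorem to write $\widetilde{\Phi}(\cdot) = V^\ast \pi(\cdot) V$ for some $\ast$-representation $\pi : \cl{O}_n \to B(\cl K)$ and isometry $V : \bb{C}^p \to \cl K$. Setting $W_i := \pi(S_i)$, the $W_i$ are isometries with pairwise orthogonal ranges satisfying $\sum_{i=1}^n W_i W_i^\ast = I_{\cl K}$, and $A_i = V^\ast W_i V$. A short calculation
\[ \sum_{i=1}^n A_i A_i^\ast \;=\; \sum_{i=1}^n V^\ast W_i V V^\ast W_i^\ast V \;\leq\; V^\ast\Bigl(\sum_{i=1}^n W_i W_i^\ast\Bigr) V \;=\; I_p \]
then shows $(A_1,\dots,A_n)$ is a row contraction.

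For the reverse direction, suppose $(A_1,\dots,A_n)$ is a row contraction. By Lemma \ref{Buncerefine}, there exist isometries $W_1,\dots,W_n \in B(\cl K)$ on some $\cl K \supseteq \bb{C}^p$, with $W_i^\ast W_j = 0$ for $i\neq j$ and with $A_i = P_{\bb{C}^p} W_i|_{\bb{C}^p}$. The universal property of the Toeplitz-Cuntz algebra then produces a $\ast$-representation $\pi : \cl{TO}_n \to B(\cl K)$ sending $T_i$ to $W_i$; compressing to $\bb{C}^p$ yields a UCP map $\cl{TO}_n \to M_p$ sending $T_i$ to $A_i$. Restricting to the operator system $\cl{T}_n$ and passing through the complete order isomorphism $\cl{T}_n \cong \cl{S}_n$ recalled just before Theorem \ref{nuclearityofCuntz}, we obtain a UCP map $\cl{S}_n \to M_p$ which agrees with $\Phi$ on the generators, hence equals $\Phi$.

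The only subtle point is in the reverse direction: Bunce's dilation produces isometries with orthogonal ranges (a Toeplitz-Cuntz system), not genuine Cuntz isometries. Hence we must route the argument through $\cl{TO}_n$ and use the identification $\cl T_n \cong \cl S_n$ to land back in the Cuntz picture; otherwise the step is essentially formal.
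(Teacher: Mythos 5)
Your proposal is correct, and its skeleton is the same as the paper's: identify the element with the unital self-adjoint map $\Phi:\cl S_n\to M_p$ sending $S_i\mapsto A_i$, and reduce the proposition to the statement that $\Phi$ is UCP if and only if $(A_1,\dots,A_n)$ is a row contraction. The difference is in how the two implications are discharged. The paper treats both as known: the forward direction is the one-line observation that a UCP map sends the row contraction $(S_1,\dots,S_n)$ to a row contraction (e.g.\ by applying $\Phi^{(n+1)}$ to the positive matrix $\bigl(\begin{smallmatrix} I & S \\ S^* & I\end{smallmatrix}\bigr)$ with $S$ the row of Cuntz isometries), and the converse is cited as the universal property of $\cl S_n$ established in \cite{ZHE14} (row contractions are exactly the images of the $S_i$ under UCP maps on $\cl S_n$). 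You instead reprove both from scratch: Arveson plus Stinespring and the estimate $\sum V^*W_iVV^*W_i^*V\le V^*(\sum W_iW_i^*)V=I_p$ for the forward direction, and Bunce's dilation (Lemma \ref{Buncerefine}) together with the universal property of $\cl{TO}_n$ and the identification $\cl T_n\cong\cl S_n$ for the converse. Your route is heavier but self-contained, and your closing remark --- that Bunce only yields a Toeplitz--Cuntz system, so one must pass through $\cl{TO}_n$ and then use $\cl T_n\cong\cl S_n$ --- is exactly the subtlety hidden inside the cited universal property; the paper's version buys brevity at the cost of outsourcing that point to the earlier paper. Both arguments are valid.
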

\begin{proof}
 Let $M=I_p\otimes \delta_0+\sum_{i=1}^nA_i\otimes \delta_i+\sum_{i=1}^nA_i^*\otimes \delta_i^*$, and view $M$ as a completely positive map
 from $\cl{S}_n$ to $M_p$, it satisfies $M(I)=I_p$, $M(S_i)=A_i$. Thus, since $M$ us unitally completely positive, we have that  $(A_1,\dots,A_n)$ is a row contraction.\\
 Conversely, if $(A_1,\dots,A_n)$ is a row contraction, then there exists  a unital completely positive map which sends $S_i$ to $A_i$, $S_i^*$ to $A^*_i$, by Theorem.
 But this map is necessarily $M:\cl{S}_n \to M_p$, and this means $M\in M_p(\cl{S}_n^d)^+$.
 \end{proof}

 \begin{prop}
  Let $\cl{A}$ be a unital $C^*$-algebra and  $\phi:\cl{S}_n^d\to \cl{A}$ be a unital linear map. Then $\phi$is completely positive if and only if $\phi$ is self-adjoint
 and
  \[ w(A_1\otimes \phi(\delta_1)+\cdots+A_n\otimes \phi(\delta_n))\leq \frac{1}{2}, \]
  for each row contraction $(A_1,\dots,A_n)\in M_p$, each $p\in \bb{N}$, where $w$ denotes the numerical radius.
 \end{prop}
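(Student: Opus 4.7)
My plan is to use Proposition \ref{positiveindual}, which identifies the positive elements of $M_p(\cl{S}_n^d)$ whose $\delta_0$-coefficient is $I_p$ with row contractions, together with the standard characterization of the numerical radius
\[ w(T)\leq \tfrac{1}{2}\iff 1+e^{i\theta}T+e^{-i\theta}T^*\geq 0\text{ for every }\theta\in\bb{R}. \]

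For the forward direction, I will first observe that a unital completely positive map is automatically self-adjoint. Then, given any row contraction $(A_1,\dots,A_n)\in M_p$, Proposition \ref{positiveindual} produces a positive element $I_p\otimes \delta_0+\sum_i A_i\otimes \delta_i+\sum_i A_i^*\otimes \delta_i^*\in M_p(\cl{S}_n^d)^+$; applying $\operatorname{id}_{M_p}\otimes \phi$ and using $\phi(\delta_0)=1$ will give
\[ I_p\otimes 1+T+T^*\geq 0\quad\text{with}\quad T=\sum_i A_i\otimes \phi(\delta_i) \]
in $M_p(\cl{A})$. Since $(e^{i\theta}A_1,\dots,e^{i\theta}A_n)$ remains a row contraction, repeating the argument yields $I+e^{i\theta}T+e^{-i\theta}T^*\geq 0$ for every $\theta$, which is exactly $w(T)\leq 1/2$.

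For the converse, I will take an arbitrary $X\in M_p(\cl{S}_n^d)^+$ and, using self-adjointness, write it in the form
\[ X=B_0\otimes \delta_0+\sum_i B_i\otimes \delta_i+\sum_i B_i^*\otimes \delta_i^*,\quad B_0=B_0^*, \]
noting that viewing $X$ as a completely positive map $\cl{S}_n\to M_p$ forces $B_0=X(I)\geq 0$. For each $\epsilon>0$, I will pass to $X_\epsilon:=X+\epsilon I_p\otimes \delta_0\geq 0$, whose $\delta_0$-coefficient $B_0+\epsilon I_p$ is now invertible, and conjugate by $(B_0+\epsilon I_p)^{-1/2}\otimes 1$ to obtain a unital positive element. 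Proposition \ref{positiveindual} then guarantees that
\[ \tilde{B}_i:=(B_0+\epsilon I_p)^{-1/2}B_i(B_0+\epsilon I_p)^{-1/2},\quad i=1,\dots,n, \]
form a row contraction. The numerical radius hypothesis applied to $(\tilde{B}_i)$, together with $\phi(\delta_i^*)=\phi(\delta_i)^*$, yields
\[ I_p\otimes 1+\sum_i \tilde{B}_i\otimes \phi(\delta_i)+\sum_i \tilde{B}_i^*\otimes \phi(\delta_i)^*\geq 0 \]
in $M_p(\cl{A})$, and conjugating back by $(B_0+\epsilon I_p)^{1/2}\otimes 1$ will produce $\phi^{(p)}(X_\epsilon)\geq 0$; finally, letting $\epsilon\to 0$ inside the closed positive cone of $M_p(\cl{A})$ delivers $\phi^{(p)}(X)\geq 0$.

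No step poses a serious obstacle once Proposition \ref{positiveindual} is available: the forward direction is a direct application, while the converse is the standard reduction of complete positivity to the unital, invertible case by conjugation and a small perturbation, combined with the phase-rotation equivalence between the numerical radius bound and the corresponding operator inequality.
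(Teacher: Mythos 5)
Your proposal is correct and follows essentially the same route as the paper: both directions rest on Proposition \ref{positiveindual}, the forward implication uses the phase-rotation characterization of $w(T)\leq \tfrac12$ applied to the rotated row contractions $(e^{i\theta}A_1,\dots,e^{i\theta}A_n)$, and the converse uses the same $\epsilon$-perturbation of the $\delta_0$-coefficient and conjugation by $(B_0+\epsilon I_p)^{-1/2}$ to reduce to the unital case. No substantive differences from the paper's argument.
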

\begin{proof}
 Suppose $\phi$ is unitally completely positive, then for
 \[ M=I_p\otimes \delta_0+\sum_{i=1}^nA_i\otimes \delta_i+\sum_{i=1}^nA_i^*\otimes \delta_i^*\in M_p(\cl{S}_n^d)^+, \]
 we must have
 \begin{align*}
  I_p\otimes \phi(M)&=I_p\otimes I+\sum_{i=1}^nA_i\otimes \phi(\delta_i)+\sum_{i=1}^nA_i^*\otimes \phi(\delta_i^*) \\
  &=I_p\otimes I+\sum_{i=1}^nA_i\otimes \phi(\delta_i)+\sum_{i=1}^nA_i^*\otimes \phi(\delta_i)^* \geq 0.
 \end{align*}
By Proposition \ref{positiveindual}, $M$ is positive if and only if $(A_1,\dots,A_n)$ is a row contraction. Noting that $(zA_1,\dots,zA_n)$ is also a row contraction, we then have
that
\[I_p\otimes I+z\sum_{i=1}^nA_i\otimes \phi(\delta_i)+\bar{z}\sum_{i=1}^nA_i^*\otimes \phi(\delta_i)^*\geq 0, \]
which means that $w(\sum_{i=1}^nA_i\otimes \phi(\delta_i))\leq \frac{1}{2}$, for each row contraction $(A_1,\dots,A_n)\in M_p$ and each $p\in \bb{N}$.

Conversely, we suppose $w(\sum_{i=1}^nA_i\otimes \phi(\delta_i))\leq \frac{1}{2}$, for each row contraction $(A_1,\dots,A_n)\in M_p$ and each $p\in \bb{N}$,  and this implies that
\[ I_p\otimes I+\sum_{i=1}^nA_i\otimes \phi(\delta_i)+\sum_{i=1}^nA_i^*\otimes \phi(\delta_i)^*\geq 0, \]
for each row contraction $(A_1,\dots,A_n)\in M_p$ and each $p\in \bb{N}$.

Choose an arbitrary $N=B_0\otimes \delta_0+\sum_{i=1}^nB_i\otimes \delta_i+\sum_{i=1}^nB_i^*\otimes \delta_i^*\in M_p(\cl{S}_n^d)^+$, then for each $\epsilon>0$,
\begin{align*}
 \epsilon I_p\otimes \delta_0+N&=(\epsilon I_p+B_0)\otimes \delta_0+\sum_{i=1}^nB_i\otimes \delta_i+\sum_{i=1}^nB_i^*\otimes \delta_i^*\geq 0,
\end{align*}
which implies
\[I_p\otimes \delta_0+\sum_{i=1}^n (\epsilon I_p+B_0)^{-\frac{1}{2}}B_i(\epsilon I_p+B_0)^{-\frac{1}{2}}\otimes \delta_i+
\sum_{i=1}^n(\epsilon I_p+B_0)^{-\frac{1}{2}}B_i^*(\epsilon I_p+B_0)^{-\frac{1}{2}}\otimes \delta_i^*\geq 0.
\]
By Proposition \ref{positiveindual}, we have that $((\epsilon I_p+B_0)^{-\frac{1}{2}}B_1(\epsilon I_p+B_0)^{-\frac{1}{2}},\dots,
(\epsilon I_p+B_0)^{-\frac{1}{2}}B_n(\epsilon I_p+B_0)^{-\frac{1}{2}})$ is a row contraction, and therefore
\[  I_p\otimes I+\sum_{i=1}^n(\epsilon I_p+B_0)^{-\frac{1}{2}}B_i(\epsilon I_p+B_0)^{-\frac{1}{2}}\otimes \phi(\delta_i)+
\sum_{i=1}^n(\epsilon I_p+B_0)^{-\frac{1}{2}}B_i^*(\epsilon I_p+B_0)^{-\frac{1}{2}}\otimes \phi(\delta_i)^*\geq 0.   \]
Thus,
\[ \phi(\epsilon I_p\otimes \delta_0+N)\geq 0, \quad \text{ for each } \epsilon>0 .\]
So we have that $ \phi(N)\geq 0$, and this completes the proof.
\end{proof}

\begin{remark}
 Since compressions of row contractions are still row contractions, it follows that if
 \[w(A_1\otimes \phi(\delta_1)+\cdots+A_n\otimes \phi(\delta_n))\leq \frac{1}{2}, \]
  for each row contraction $(A_1,\dots,A_n)\in M_p$, each $p\in \bb{N}$, then for Cuntz isometries $S_1,\dots,S_n$,
  \[ w(S_1\otimes\phi(\delta_1)+\cdots+S_n\otimes \phi(\delta_n))\leq \frac{1}{2},\]
  where $S_i$'s are Cuntz isometries and the tensor product is the minimal one  so that
$S_1\otimes a_1^*+\cdots+S_n\otimes a_n^*\in \cl{O}_n\otimes_{\min}\cl{A}$.
  Conversely, using the universal property of $\cl{S}_n$, we have that  for each row contraction $(A_1,\dots,A_n)\in M_p$, the map sending $S_i$ to $A_i$, $S_i^*$ to $A_i^*$,
  $I$ to $I_p$ is completely positive and hence
    \[ w(S_1\otimes \phi(\delta_1)+\cdots+S_n\otimes\phi(\delta_n))\leq \frac{1}{2} \]
    implies that
    \[w(A_1\otimes \phi(\delta_1)+\cdots+A_n\otimes \phi(\delta_n))\leq \frac{1}{2}. \]
\end{remark}
On the other hand, we have that 
\[ w(S_1\otimes \phi(\delta_1)+\cdots+S_n\otimes \phi(\delta_n))\leq \frac{1}{2}\] if and only if 
\[ I \otimes 1 + \sum_{j=1}^n S_j \otimes a^*_j + \sum_{j=1}^n S_j^*
\otimes a_j \geq 0.\]
So we have proved the following corollary.
\begin{cor}\label{cpstrongrowcontraction}
A unital linear map $\phi:\cl{S}_n^d\to \cl{A} $ is completely positive if and only if $\phi$ is self-adjoint
 and
  \[ w(S_1\otimes \phi(\delta_1)+\cdots+S_n\otimes \phi(\delta_n))\leq \frac{1}{2}, \]
  where $S_1,\dots,S_n$ are Cuntz isometries if and only if 
  $(\phi(\delta_1)^*,\dots,\phi(\delta_n)^*)$ is a dual row contraction. 
\end{cor}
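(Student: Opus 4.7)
The statement is a chain of three equivalent conditions, and my plan is to derive each equivalence from material already established in the excerpt rather than starting from scratch.

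The first equivalence, between $\phi$ being unital completely positive and the pair \emph{($\phi$ self-adjoint, numerical radius condition over all matrix row contractions)}, is exactly the preceding proposition applied to $\phi:\cl{S}_n^d\to \cl{A}$; there is nothing new to prove here. The second equivalence, which reduces the numerical radius bound from all matrix row contractions $(A_1,\dots,A_n)\in M_p$ to the single specific choice of Cuntz isometries $(S_1,\dots,S_n)$ with the minimal tensor norm, is exactly the content of the remark immediately preceding the statement: one direction follows because Cuntz isometries form a row contraction (after choosing a faithful representation and taking compressions), and the converse uses the universal mapping property of $\cl{S}_n$ -- for any row contraction $(A_1,\dots,A_n)$, the unital map $S_i\mapsto A_i$, $S_i^*\mapsto A_i^*$ is completely positive, so tensoring with $\phi$ preserves the numerical radius inequality.

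The only genuinely new piece is the final equivalence: $w(\sum_i S_i\otimes \phi(\delta_i))\leq \frac{1}{2}$ if and only if $(\phi(\delta_1)^*,\dots,\phi(\delta_n)^*)$ is a dual row contraction. The plan is to use the standard characterization of numerical radius: setting $T=\sum_i S_i\otimes \phi(\delta_i)$, one has $w(T)\leq \frac{1}{2}$ if and only if $I\otimes 1 + zT+\bar z T^*\geq 0$ for every $z\in\mathbb{T}$. Taking $z=1$ yields the definition of dual row contraction with $a_j=\phi(\delta_j)^*$ (so $a_j^*=\phi(\delta_j)$). For the converse, I would invoke the gauge automorphism of $\cl{O}_n$: since $(zS_1,\dots,zS_n)$ are also Cuntz isometries for each $z\in\mathbb{T}$, the universal property supplies a $*$-homomorphism $\pi_z:\cl{O}_n\to \cl{O}_n$ with $\pi_z(S_i)=zS_i$. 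Applying $\pi_z\otimes \operatorname{id}_{\cl{A}}$ (which is completely positive) to the positive element $I\otimes 1 + \sum_j S_j\otimes a_j^* + \sum_j S_j^*\otimes a_j$ yields positivity for every $z\in\mathbb{T}$, which is the numerical radius condition.

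The main obstacle, to the extent that there is one, is just being careful with adjoints: the dual row contraction condition is written in terms of $a_j$ (appearing with $S_j^*$) and $a_j^*$ (appearing with $S_j$), while $\phi(\delta_j)$ appears with $S_j$; so one must set $a_j=\phi(\delta_j)^*$ and verify the bookkeeping. The rest is a straightforward chaining of equivalences, and the corollary follows immediately.
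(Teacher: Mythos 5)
Your proposal is correct and follows essentially the same route as the paper: the first equivalence is the preceding proposition, the second is the remark on passing between arbitrary matrix row contractions and the Cuntz isometries via compressions and the universal property, and the third is the standard identity $w(T)\leq \tfrac{1}{2} \Leftrightarrow I\otimes 1 + zT + \bar{z}T^* \geq 0$ for all $z\in\mathbb{T}$, with the gauge argument (replacing $S_i$ by $zS_i$) reducing the all-$z$ condition to the single case $z=1$ --- exactly the argument the paper already used in its remark following the definition of dual row contraction. Your attention to the adjoint bookkeeping $a_j=\phi(\delta_j)^*$ matches the paper's convention, so nothing further is needed.
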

In \cite{POP09}, the joint numerical radius for $n-$tuple of operators $(T_1,\dots,T_n)\in B(\cl{H})$ is defined as:
\[ w(T_1,\dots, T_n):=\sup\biggl|\sum_{\alpha\in F_n^+}\sum_{j=1}^n\inner{h_{\alpha}}{T_jh_{g_j\alpha}}\biggr|, \]
where $F_n$ is the free group on $n$ generators $g_1,\dots,g_n$, and the supremum is taken over all families of vectors
$\{h_{\alpha}\}_{\alpha\in F_n^+}\subseteq \cl{H}$ with $\sum_{\alpha\in F_n^+}\|h_{\alpha}\|^2=1$.

It is shown in the same paper that $w(T_1,\dots,T_n)=w(S_1\otimes T^*_1+\cdots+S_n\otimes T_n^*)$ \cite[Corollary 1.2]{POP09}, where $w$ on the
right hand side is the numerical radius of an operator on $\cl{H}$ defined in the usual way. Thus, it is natural to extend the notion of  joint numerical radii of $n$-tuples to
the category of $C^*$-algebras.
\begin{defn}
Let $\cl{A}$ be a $C^*$-algebra. The \emph{joint numerical radius} of the $n$-tuple $(a_1,\dots,a_n)\in \cl{A}$ is:
\[ w(a_1,\dots,a_n):=w(S_1\otimes a_1^*+\cdots+S_n\otimes a_n^*),\]
where $S_i$'s are Cuntz isometries.
\end{defn}

\begin{remark}\label{strongrowcontractionnumericalradius}
Let $\cl{A}$ be a $C^*$-algebra. Then the $n$-tuple $(a_1,\dots,a_n)\in\cl{A}$ is a dual row contraction if and only if
\[ w(a_1,\dots,a_n)\leq \frac{1}{2}.\]
\end{remark}

\begin{thm}
Let $E_n'=\Span\{I_{n+1},E_{i0},E_{0i}:1\leq i\leq n\}\subseteq M_{n+1}$, then $\cl{S}_n^d=E_n'$ via the map $\theta:\cl{S}_n^d\to E_n'$, with
$\theta(\delta_0)=I_{n+1}$, $\theta(\delta_i)=E_{0i}$, $\theta(\delta_i^*)=E_{i0}$, for $1\leq i\leq n$.
\end{thm}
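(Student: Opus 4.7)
The plan is to observe that both $\cl S_n^d$ and $E_n'$ have dimension $2n+1$, and that the image basis $\{I_{n+1}, E_{0i}, E_{i0}\}_i$ is linearly independent in $M_{n+1}$ (the identity is not in the span of the strictly off-diagonal matrix units). So $\theta$ is already a unital linear bijection, and the task reduces, for each $p\in\bb N$, to showing that $\theta^{(p)}:M_p(\cl S_n^d)\to M_p(E_n')$ preserves positivity in both directions. Writing a typical element as $F=B_0\otimes\delta_0+\sum_i B_i\otimes\delta_i+\sum_i B_i^*\otimes\delta_i^*$ with $B_0=B_0^*\in M_p$, positivity of $F$ in $M_p(\cl S_n^d)$ is by definition complete positivity of the associated map $\phi_F:\cl S_n\to M_p$ determined by $\phi_F(I)=B_0$ and $\phi_F(S_i)=B_i$.

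Under $\theta^{(p)}$ this element maps to
\[ \theta^{(p)}(F)=\begin{pmatrix} B_0 & B_1 & \cdots & B_n \\ B_1^* & B_0 & & \\ \vdots & & \ddots & \\ B_n^* & & & B_0 \end{pmatrix}\in M_{n+1}(M_p), \]
and, since $E_n'$ inherits its order from $M_{n+1}$, positivity of $\theta^{(p)}(F)$ in $M_p(E_n')$ is exactly positivity of this block matrix in $M_{n+1}(M_p)$. The theorem therefore reduces to the single equivalence: $\phi_F$ is CP iff the displayed block matrix is positive. When $B_0$ is strictly positive I would conjugate the matrix by $\mathrm{diag}(B_0^{-1/2},\dots,B_0^{-1/2})$, and simultaneously rescale $\phi_F$ to $\tilde\phi(X):=B_0^{-1/2}\phi_F(X)B_0^{-1/2}$. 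Both operations are order isomorphisms and reduce the two sides to the same unital problem with $S_i$-values $B_0^{-1/2}B_iB_0^{-1/2}$; for that unital problem, Proposition \ref{positiveindual} gives the CP equivalence on the operator system side, while the standard Schur-complement characterization of positivity of a block matrix with identity diagonal gives precisely the same row-contraction condition on the matrix side.

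The remaining subtlety is the degenerate case $B_0\not>0$, which I would dispatch by a perturbation. The element $I_p\otimes\delta_0$ is positive in $M_p(\cl S_n^d)$ (it corresponds to the CP map $X\mapsto\delta_0(X)I_p$) and is sent by $\theta^{(p)}$ to $I\in M_{n+1}(M_p)$, which is positive there. Hence the perturbation $F+\epsilon I_p\otimes\delta_0$ is positive on one side if and only if it is on the other, its $(0,0)$-block becomes $B_0+\epsilon I_p$ which is invertible, and both positive cones are closed under the limit $\epsilon\to 0^+$. The principal obstacle in the whole argument is really the conjugation/rescaling reduction to Proposition \ref{positiveindual}; once that bookkeeping is in place, the approximation step for non-invertible $B_0$ is routine and both directions of complete positivity follow at once.
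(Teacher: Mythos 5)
Your proposal is correct and follows essentially the same route as the paper: both arguments rest on Proposition \ref{positiveindual} (positivity in $M_p(\cl S_n^d)$ with identity $(0,0)$-coefficient is exactly row contractivity of the $\delta_i$-coefficients), on the Schur-complement fact that the corresponding block matrix in $M_{n+1}(M_p)$ is positive under precisely the same condition, and on conjugation by $(\epsilon I_p + B_0)^{-1/2}$ followed by an Archimedean/closedness limit in $\epsilon$ to remove the normalization. The only cosmetic difference is that the paper establishes complete positivity of $\theta$ itself by invoking its numerical-radius criterion for maps out of $\cl S_n^d$ (Corollary \ref{cpstrongrowcontraction}) and checking $w\leq\tfrac12$ for the matrix of Cuntz isometries, whereas you run the same normalization-and-row-contraction argument symmetrically in both directions; the substance is identical.
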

\begin{proof}
We first show that $\theta$ is completely positive.
By Corollary \ref{cpstrongrowcontraction} and Remark \ref{strongrowcontractionnumericalradius}, we just need to show that for $n$ Cuntz isometries $S_1,\dots,S_n$,
\[ w\biggl(\begin{pmatrix}
0&S_1&\cdots&S_n\\
0&0&&\\
\vdots&&\ddots&\\
0&&&0
\end{pmatrix}\biggr)\leq \frac{1}{2}  ,\]
which is equivalent to
\[ \begin{pmatrix}
I&zS_1&\cdots&zS_n\\
\bar{z}S_1^*&I&&\\
\vdots&&\ddots&\\
\bar{z}S_n^*&&&I
\end{pmatrix}\geq 0\quad \text{ for all }z\in \bb{T}\]
which clearly holds since $(zS_1,\dots,zS_n)$ is row contraction.

Next, we show that $\theta^{-1}$ is also completely positive. Let $p\in \bb{N}$ and note that $M_p(E_n')=E_n'(M_p)$, we can write a positive element $A\in M_p(E_n')$ as
\[  \begin{pmatrix}
A_0&A_1&\cdots&A_n\\
A_1^*&A_0&&\\
\vdots&&\ddots&\\
A_n^*&&&A_0
\end{pmatrix}, \]
where $A_i\in M_p$.
Consider $\epsilon I_p\otimes I_n+A$, where $I_p$ denotes the identity matrix in $M_p$, for $\epsilon>0$, and let $B=(\epsilon I_p+A_0)^{-\frac{1}{2}}$, we have that
\[ \begin{pmatrix}
I_p&BA_1B&\cdots&BA_nB\\
BA_1^*B&I_p&&\\
\vdots&&\ddots&\\
BA_n^*B&&&I_p
\end{pmatrix}\geq 0 . \]
This implies that $(BA_1B,\dots, BA_nB)$ is a row contraction, and hence
\[ (\theta^{-1})^{(p)}\biggl(\begin{pmatrix}
I_p&BA_1B&\cdots&BA_nB\\
BA_1^*B&I_p&&\\
\vdots&&\ddots&\\
BA_n^*B&&&I_p
\end{pmatrix}\biggr)=I_p\otimes \delta_0+\sum_{i=1}^nBA_iB\otimes \delta_i+\sum_{i=1}^nBA^*_iB\otimes \delta^*_i\geq 0, \]
by Proposition \ref{positiveindual}. Thus,
\[ (\epsilon I_p+A_0)\otimes \delta_0+\sum_{i=1}^nA_i\otimes \delta_i+\sum_{i=1}^nA^*_i\otimes \delta^*_i\geq 0, \quad \text { for all }\epsilon >0 , \]
which  is the same as $(\theta^{-1})^{(p)}(\epsilon I_p\otimes I_n+A)\geq 0$, for all $\epsilon >0$. Since $\theta$ is unital, we know that
$(\theta^{-1})^{(p)}(A)\geq 0$. Hence, $\theta^{-1}$ is also completely positive.
\end{proof}

We recall the following result of Kavruk:

\begin{thm}\cite{KAV14b}\label{nucleartodual}
Let $\cl{S}$ be a finite dimensional. Then $\cl{S}$ is $C^*$-nuclear if and only if $\cl{S}^d$ is.
\end{thm}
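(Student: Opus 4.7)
My plan is to factor the statement through the characterization of $C^*$-nuclearity in terms of two intrinsic properties of operator systems: OSLLP (the operator system local lifting property) and WEP (the weak expectation property). Combining this with the fact that OSLLP and WEP are dual to one another in the finite-dimensional setting yields self-duality of $C^*$-nuclearity.

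First, I would invoke the characterization that $\cl{S}$ is $C^*$-nuclear if and only if $\cl{S}$ has both OSLLP and WEP. This is the operator-system analog of the Kirchberg--Effros--Haagerup theorem for $C^*$-algebras; one direction follows from Arveson's extension theorem combined with Proposition \ref{ceuqalmax}, while the other uses the lifting of completely positive maps against free objects such as $B(\ell^2)$ together with the projectivity of max (Proposition \ref{completequotientmapmaxtensor}). Second, I would establish the two dualities for finite-dimensional $\cl{S}$, namely that $\cl{S}$ has OSLLP iff $\cl{S}^d$ has WEP, and $\cl{S}$ has WEP iff $\cl{S}^d$ has OSLLP. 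These both rest on the reflexivity $\cl{S}^{dd}\cong \cl{S}$ and the tensor-product duality $(\cl{S}\otimes_{\min}\cl{T})^d\cong \cl{S}^d\otimes_{\max}\cl{T}^d$ for finite-dimensional $\cl{S}, \cl{T}$, which in turn is proved by c.o.i.\ embeddings into matrix algebras $M_n$, $M_m$ (themselves self-dual) and by matching positive cones using Lemma \ref{PtensorQinmax} against the injectivity of min. A lifting-type positivity condition on one side of the duality becomes an expectation-type positivity condition on the other, which encodes the swap between OSLLP and WEP. Third, assembling the chain $\cl{S}$ is $C^*$-nuclear $\Leftrightarrow$ $\cl{S}$ has OSLLP and WEP $\Leftrightarrow$ $\cl{S}^d$ has WEP and OSLLP $\Leftrightarrow$ $\cl{S}^d$ is $C^*$-nuclear completes the argument.

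The main obstacle is the second step, and specifically transporting OSLLP across duality. OSLLP is a lifting property with values in a quotient $C^*$-algebra (typically $B(\ell^2)/K(\ell^2)$) which is not finite-dimensional, whereas the clean min-max duality is only available between finite-dimensional factors. The technical bridge is to test OSLLP and WEP against finite-dimensional subsystems (using injectivity of min and projectivity of max), dualize there, and pass back to the infinite-dimensional target by an approximation argument (taking weak-operator-topology limits of compressions by the matrix units of a fixed c.o.i.\ embedding $\cl{S}\subseteq M_n$). A secondary subtlety is the Archimedeanization one needs so that the duality preserves strict positivity, handled by the standard trick of replacing $u$ by $u+\varepsilon 1\otimes 1$ and sending $\varepsilon\to 0$.
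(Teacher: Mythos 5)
First, a point of comparison: the paper does not prove this statement at all --- it is imported verbatim from Kavruk's paper [KAV14b] --- so your proposal can only be judged on its own terms, not against an internal argument.

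There is a genuine gap, and it is your first step: the claimed equivalence ``$\cl S$ is $C^*$-nuclear iff $\cl S$ has OSLLP and WEP'' is false for finite-dimensional operator systems, with $\cl S_n$ itself as a counterexample to the forward implication. If a finite-dimensional $\cl S$ has WEP, then (since $\cl S^{**}=\cl S$) the identity of $\cl S$ factors by ucp maps through an injective operator system, so $\cl S$ is the range of a ucp idempotent on an injective object and is therefore itself injective; by Theorem 6.11 of [KPTT13], quoted in the introduction, $\cl S$ would then be (min, max)-nuclear. Since $\cl S_n$ is $C^*$-nuclear but not (min, max)-nuclear (the first proposition of Section 2), it cannot have WEP. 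The correct Kirchberg-type decomposition replaces both of your ingredients: $\cl S$ is $C^*$-nuclear iff $\cl S$ is \emph{exact} and has the \emph{DCEP} (this follows from $\min\leq el\leq c$ and the identifications exactness $=(\min,el)$-nuclearity, DCEP $=(el,c)$-nuclearity). Your ``if'' direction, OSLLP $+$ WEP $\Rightarrow$ $C^*$-nuclear, is in spirit the assertion that LLP together with WEP forces nuclearity; these two properties are tensor identities against the two special test objects $B(\ell^2)$ and $C^*(\bb F_\infty)$ only, and jointly they give $\min=\max$ of $\cl S$ against itself (Kirchberg), not against an arbitrary $C^*$-algebra.

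The second step is mismatched for the same reason. The duality that is actually available in finite dimensions pairs \emph{exactness} with the lifting property: $\cl S$ is exact iff $\cl S^d$ has OSLLP, and $\cl S$ has OSLLP iff $\cl S^d$ is exact; this is what the functional identification $(\cl S\otimes_{\min}\cl T)^d\cong\cl S^d\otimes_{\max}\cl T^d$ buys you. It does not pair OSLLP with WEP. With the correct decomposition, the chain $\cl S$ $C^*$-nuclear $\Rightarrow$ $\cl S$ has OSLLP $\Rightarrow$ $\cl S^d$ exact is fine, but you still need ``$\cl S^d$ has DCEP,'' and how DCEP behaves under duality is exactly the nontrivial content of [KAV14b] (proved there via the positivity, in various tensor cones of $\cl S^d\otimes\cl S$, of the canonical element implementing the identity map --- the Namioka--Phelps-type argument). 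That piece cannot be recovered from the compression/weak-limit approximation you sketch, so the architecture ``decompose, dualize each factor, reassemble'' is reasonable, but both the decomposition and the dualities need to be replaced before it becomes a proof.
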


\begin{cor} $E_n'$ is a C*-nuclear operator system.
\end{cor}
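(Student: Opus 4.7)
The proof is essentially immediate given everything that has been assembled, so the plan is simply to chain three facts together in the right order. First I would observe that $\cl{S}_n$ is a finite-dimensional operator system (it is spanned by the $2n+1$ elements $I, S_1,\dots,S_n, S_1^*,\dots,S_n^*$), and that the earlier sections establish that $\cl{S}_n$ is $C^*$-nuclear — either via Corollary following Lemma~\ref{operatorsystemimpliesnuclearity} (using nuclearity of $\cl{O}_n$) or via the direct argument of Section 4.

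Next I would invoke Kavruk's duality result, Theorem~\ref{nucleartodual}, which applies precisely because $\cl{S}_n$ is finite-dimensional: it yields that the dual operator system $\cl{S}_n^d$ is $C^*$-nuclear.

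Finally, I would appeal to the theorem immediately preceding this corollary, which exhibits an explicit complete order isomorphism $\theta: \cl{S}_n^d \to E_n'$. Since $C^*$-nuclearity is a property that depends only on the operator system structure and is clearly preserved under complete order isomorphism (the identity map $E_n' \otimes_{\min} \cl{A} \to E_n' \otimes_{\max} \cl{A}$ factors through $\cl{S}_n^d \otimes_{\min} \cl{A} = \cl{S}_n^d \otimes_{\max} \cl{A}$ via $\theta^{-1} \otimes \operatorname{id}_{\cl{A}}$ and $\theta \otimes \operatorname{id}_{\cl{A}}$, both of which are unital complete order isomorphisms by functoriality of the tensor products), we conclude that $E_n'$ is $C^*$-nuclear.

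There is no real obstacle here — every ingredient has been built up already. The only point worth being explicit about is the transport of $C^*$-nuclearity across the isomorphism $\theta$, and this is just the naturality of the min and max tensor products with respect to complete order isomorphisms. The mild philosophical content of the corollary, as the authors note in the introduction, is that although $E_n' \subseteq M_{n+1}$ looks like it ought to be directly amenable to elementary matrix arguments, the present proof nonetheless routes through the Cuntz algebra and Kavruk's duality rather than supplying a self-contained verification.
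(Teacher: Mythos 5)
Your proof is correct and follows exactly the paper's argument: $\cl{S}_n$ is $C^*$-nuclear and finite-dimensional, so Kavruk's Theorem \ref{nucleartodual} gives $C^*$-nuclearity of $\cl{S}_n^d$, which transfers to $E_n'$ via the complete order isomorphism $\theta$. Your added remark on why $C^*$-nuclearity passes across a complete order isomorphism is a harmless elaboration of a step the paper treats as immediate.
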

\begin{proof} Since $\cl S_n$ is C*-nuclear, $\cl S_n^d$ is C*-nuclear by the above theorem. But $E_n'= \cl S_n^d$ up to complete order isomorphism.
\end{proof}
\begin{remark}
The operator system $E_n'$ seems more elementary to deal with and if we could show directly that $E_n'$ is C*-nuclear, then that would imply by Kavruk's result that $\cl S_n$ is C*-nuclear, which in turn would give another proof of the nuclearity of the Cuntz algebras. However, we have been unable to prove directly that $E_n'$ is $C^*$-nuclear. 
\end{remark}
\section{A Lifting Theorem For Joint Numerical Radius}
The local lifting property of an operator system $\cl{S}$ is defined in \cite{KPTT13}:
\begin{defn}
Let $\cl{S}$ be an operator system, $\cl{A}$ be a unital $C^*$-algebra, $I\lhd\cl{A}$ be an ideal, $q:\cl{A}\to\cl{A}/I$ be the quotient
map and $\phi:\cl{S}\to \cl{A}/I$ be a unital completely positive map.
We say $\phi$ \emph{lifts locally}, if for every finite dimensional
operator system $\cl{S}_0\subseteq \cl{S}$, there exists a completely positive map $\psi:\cl{S}_0\to \cl{A}$ such that $q\circ \psi=\phi$.
We say that $\cl{S}$ has the \emph{operator system locally lifting property} (OSLLP) if for every $C^*$-algebra $\cl{A}$ and every ideal
$I\subseteq \cl{A}$, every unital completely positive map $\phi:\cl{S}\to \cl{A}/I$ lifts locally.
\end{defn}

\begin{thm}\cite{KPTT13}
Let $\cl{S}$ be an operator system, then the following are equivalent:
\begin{enumerate}
\item $\cl{S}$ has the OSLLP;
\item $\cl{S}\otimes_{\min}B(\cl{H})=\cl{S}\otimes_{\max}B(\cl{H})$.
\end{enumerate}
\end{thm}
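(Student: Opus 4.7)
\bigskip

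\noindent\textbf{Proof plan.} The theorem, due to Kavruk--Paulsen--Todorov--Tomforde, characterizes the OSLLP via a single tensor-product identity. My plan is to prove each direction separately, leveraging the tools assembled in the introduction: Arveson's extension theorem, Stinespring dilation, projectivity of the max tensor product (Proposition~\ref{completequotientmapmaxtensor}), and injectivity of the min tensor product.

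For $(1)\Rightarrow(2)$: The aim is to show that the identity map $\cl{S}\otimes_{\min}B(\cl{H})\to\cl{S}\otimes_{\max}B(\cl{H})$ is completely positive. I would realize $B(\cl{H})$ as a quotient $\pi:\cl{A}\to B(\cl{H})$ of a $C^*$-algebra $\cl{A}$, e.g. a sufficiently large full free-group $C^*$-algebra $C^*(\bb{F}_\kappa)$, with the special property that $\cl{A}\otimes_{\min}\cl{T}=\cl{A}\otimes_{\max}\cl{T}$ for every operator system $\cl{T}$ with the OSLLP — this is essentially Kirchberg's characterization of the LLP. Given $u\in M_p(\cl{S}\otimes_{\min}B(\cl{H}))^+$, I would use the OSLLP of $\cl{S}$ (applied locally to a finite-dimensional operator subsystem of $\cl{S}$ capturing the $\cl{S}$-support of $u$) together with the special min--max property of $\cl{A}$ to produce a positive pre-image of $u$ in $\cl{S}\otimes_{\max}\cl{A}$. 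Then $\operatorname{id}_{\cl{S}}\otimes\pi:\cl{S}\otimes_{\max}\cl{A}\to\cl{S}\otimes_{\max}B(\cl{H})$ (which is completely positive by Proposition~\ref{completequotientmapmaxtensor}) pushes this pre-image forward, certifying that $u$ lies in $M_p(\cl{S}\otimes_{\max}B(\cl{H}))^+$.

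For $(2)\Rightarrow(1)$: Fix a unital $C^*$-algebra $\cl{A}$, an ideal $I\lhd\cl{A}$, a ucp map $\phi:\cl{S}\to\cl{A}/I$, and a finite-dimensional operator subsystem $\cl{S}_0\subseteq\cl{S}$. Faithfully represent $\cl{A}\subseteq B(\cl{H})$ and let $\bar{I}$ denote the closure of $I$ in $B(\cl{H})$; the natural inclusion $\cl{A}/I\hookrightarrow B(\cl{H})/\bar{I}$ is a complete order injection, so we may regard $\phi|_{\cl{S}_0}$ as taking values in $B(\cl{H})/\bar{I}$. By injectivity of the min tensor product, the hypothesis restricts to $\cl{S}_0\otimes_{\min}B(\cl{H})=\cl{S}_0\otimes_{\max}B(\cl{H})$; and by projectivity of the max tensor product applied to the quotient $B(\cl{H})\to B(\cl{H})/\bar{I}$, every positive element of $\cl{S}_0\otimes_{\max}(B(\cl{H})/\bar{I})$ — in particular the one corresponding to $\phi|_{\cl{S}_0}$ under the finite-dimensional duality of operator systems — lifts to a positive element of $\cl{S}_0\otimes_{\max}B(\cl{H})$. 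This yields a completely positive lift $\tilde{\psi}:\cl{S}_0\to B(\cl{H})$ of $\phi|_{\cl{S}_0}$. A standard approximation / weak$^*$-compactness argument (using finite-dimensionality of $\cl{S}_0$) then replaces $\tilde{\psi}$ by a nearby completely positive map $\psi:\cl{S}_0\to\cl{A}$ with $q\circ\psi=\phi|_{\cl{S}_0}$, which is the required local lift.

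The principal obstacle lies in $(1)\Rightarrow(2)$, namely producing the auxiliary $C^*$-algebra $\cl{A}$ with both a surjection onto $B(\cl{H})$ and the min--max collapse against arbitrary OSLLP operator systems. This is essentially Kirchberg's theorem on the LLP of $C^*(\bb{F}_\infty)$, whose proof draws on machinery beyond the elementary operator-system-tensor framework of this paper. By contrast, direction $(2)\Rightarrow(1)$, once the correct reductions are made, follows relatively formally from projectivity of max and injectivity of min.
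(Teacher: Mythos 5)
First, a point of order: the paper does not prove this theorem --- it is quoted from \cite{KPTT13} as background, so there is no in-paper argument to compare yours against. Judged on its own terms, your sketch has a genuine gap in the direction $(1)\Rightarrow(2)$. The key input you posit is that $\cl{A}=C^*(\bb{F}_\kappa)$ satisfies $\cl{A}\otimes_{\min}\cl{T}=\cl{A}\otimes_{\max}\cl{T}$ for \emph{every} operator system $\cl{T}$ with the OSLLP, which you describe as ``essentially Kirchberg's characterization of the LLP.'' It is not. Kirchberg's theorem is the single identity $C^*(\bb{F})\otimes_{\min}B(\cl{H})=C^*(\bb{F})\otimes_{\max}B(\cl{H})$, and the $B(\cl{H})$ factor enters through its weak expectation property, not through any lifting property. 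Your blanket statement, applied to $\cl{T}=C^*(\bb{F}_\infty)$ (which has the LLP, hence the OSLLP), would assert $C^*(\bb{F}_\kappa)\otimes_{\min}C^*(\bb{F}_\infty)=C^*(\bb{F}_\kappa)\otimes_{\max}C^*(\bb{F}_\infty)$ --- the Kirchberg conjecture, which is not a lemma one may cite and is now known to be false. Moreover, the assumed property of $\cl{A}$ has exactly the same logical form as the implication you are trying to establish (with $B(\cl{H})$ replaced by $\cl{A}$), so even were it true the argument would be circular. A correct Kirchberg-style proof does present $B(\cl{H})$ as $C^*(\bb{F})/I$, but it applies the OSLLP of $\cl{S}$ to lift a ucp map of a finite-dimensional subsystem of $\cl{S}$ through that quotient and then transports positivity via $\cl{S}\otimes_{\max}C^*(\bb{F})\to\cl{S}\otimes_{\max}B(\cl{H})$, using only Kirchberg's actual theorem along the way.

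Direction $(2)\Rightarrow(1)$ is closer to the standard argument --- finite-dimensional duality, projectivity of max, and injectivity of min are indeed the right tools --- but your first reduction is ill-formed: if $\cl{A}\subseteq B(\cl{H})$ and $I\lhd\cl{A}$, then $I$ is already closed and is \emph{not} an ideal of $B(\cl{H})$, so ``$B(\cl{H})/\bar{I}$'' is not a $C^*$-algebra quotient and there is no complete order injection $\cl{A}/I\hookrightarrow B(\cl{H})/\bar{I}$ to route $\phi$ through. Relatedly, the closing ``standard approximation / weak$^*$-compactness argument'' that is supposed to pull a lift landing in $B(\cl{H})$ back into $\cl{A}$ is exactly where the difficulty lives, and nothing in the sketch supplies it: a completely positive map into $B(\cl{H})$ gives no control on membership in $\cl{A}$. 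The workable route stays inside $\cl{A}$: encode $\phi|_{\cl{S}_0}$ as a strictly positive element of $\cl{S}_0^d\otimes(\cl{A}/I)$, use projectivity of the max tensor product over the genuine quotient $\cl{A}\to\cl{A}/I$ to characterize liftability, and bring in hypothesis $(2)$ through a faithful representation of $\cl{A}$ to compare the min and max cones. As written, both directions contain steps that would fail.
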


We have seen that the operator system $\cl{S}_n^d$ is $C^*$-nuclear (Theorem \ref{nucleartodual}). In particular, we have that for a Hilbert space $\cl{H}$,
\[ \cl{S}_n^d\otimes_{\min} B(\cl{H})=\cl{S}_n^d\otimes_{\max}B(\cl{H}) .\]
Thus, the operator system $\cl{S}^d_n$ has the lifting property (LP). \\
By using the LP of $\cl{S}^d_n$, we are able to derive the following result concerning the joint numerical radius.
\begin{thm}
 Let $\cl{A}$ be a unital $C^*$-algebra and $J\lhd \cl{A}$ be an ideal.
 Suppose $T_1+J,\dots,T_n+J\in \cl{A}/J$, then there exist $W_1,\dots,W_n\in \cl{A}$ with $W_i+J=T_i+J$ for each
 $1\leq i\leq n$, such that $w(W_1,\dots,W_n)=w(T_1+J,\dots,T_n+J)$.
\end{thm}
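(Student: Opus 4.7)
The plan is to translate the lifting problem for the joint numerical radius into a lifting problem for unital completely positive maps out of $\cl{S}_n^d$, and then exploit the fact that $\cl{S}_n^d$ is $C^*$-nuclear (Theorem \ref{nucleartodual}) and hence has the OSLLP.

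First, I would set $r = w(T_1+J,\dots,T_n+J)$. If $r = 0$, then each $T_i$ lies in $J$ and $W_i = 0$ does the job, so assume $r > 0$. By Remark \ref{strongrowcontractionnumericalradius} applied in $\cl{A}/J$, the scaled tuple $\big((T_1+J)/(2r),\dots,(T_n+J)/(2r)\big)$ is a dual row contraction. Corollary \ref{cpstrongrowcontraction} then tells me that the unital self-adjoint linear map
\[ \phi: \cl{S}_n^d \to \cl{A}/J, \qquad \phi(\delta_0) = 1+J, \quad \phi(\delta_i) = (T_i+J)^*/(2r), \quad \phi(\delta_i^*) = (T_i+J)/(2r), \]
is completely positive.

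Next, since $\cl{S}_n^d$ is finite-dimensional and $C^*$-nuclear, it has the OSLLP; and because $\cl{S}_n^d$ is its own finite-dimensional subsystem, the local lift is automatically a global lift. Invoking the standard fact that for a finite-dimensional operator system with the OSLLP one may take this lift to be unital (a routine perturbation using a quasi-central approximate unit of $J$ applied to any cp lift $\tilde\psi$ with $\tilde\psi(1)-1\in J$), I obtain a ucp map $\psi:\cl{S}_n^d \to \cl{A}$ with $q\circ\psi = \phi$, where $q:\cl{A}\to\cl{A}/J$ is the quotient. I then set $W_i := 2r\,\psi(\delta_i)^*$. Directly from $\psi(\delta_i)+J = \phi(\delta_i)$ I get $W_i+J = T_i+J$ for each $i$, and Corollary \ref{cpstrongrowcontraction} applied to $\psi$ shows $(\psi(\delta_1)^*,\dots,\psi(\delta_n)^*)$ is a dual row contraction, so $w(W_1,\dots,W_n) \le r$ by Remark \ref{strongrowcontractionnumericalradius} and the obvious scaling of $w$.

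For the reverse inequality, I note that $q$ extends to a surjective $*$-homomorphism $\mathrm{id}\otimes q:\cl{O}_n\otimes_{\min}\cl{A}\to \cl{O}_n\otimes_{\min}\cl{A}/J$, and numerical radius is contractive under $*$-homomorphisms (since states pull back to states); applied to $\sum_j S_j\otimes W_j^*$ this gives $r = w(T_1+J,\dots,T_n+J) \le w(W_1,\dots,W_n)$, so equality holds. The main obstacle is the unital lifting step: OSLLP a priori yields only a cp (not ucp) lift, and while the textbook argument using a quasi-central approximate unit of $J$ is well known, it is the only nontrivial piece of the argument and must be executed carefully to preserve exact (not merely approximate) agreement with $\phi$ modulo $J$.
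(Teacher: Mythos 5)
Your proposal is correct and follows essentially the same route as the paper: scale by $2r$, encode the tuple as a ucp map $\phi:\cl{S}_n^d\to\cl{A}/J$ via Corollary \ref{cpstrongrowcontraction}, lift it using the lifting property of the $C^*$-nuclear finite-dimensional system $\cl{S}_n^d$, and recover $W_i$ from the lift; the reverse inequality in the paper is obtained by the same mechanism (complete positivity of $\operatorname{id}\otimes q$ forces the joint numerical radius not to increase under the quotient), merely phrased as a proof by contradiction rather than your direct state-pullback argument. Your extra care in upgrading the cp lift to a ucp lift addresses a point the paper passes over silently, but it does not change the substance of the argument.
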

\begin{proof}
Suppose $w(T_1+J,\dots,T_n+J)=K$. If $K=0$, then clearly $T_i+J=0$ for each $1\leq i\leq n$. So we can choose $W_i=0$ for every $1\leq i\leq n$.

So we consider the case when $K>0$. A little scaling shows  that
\[ w(\frac{T_1}{2K}+J,\dots,\frac{T_n}{2K}+J)=\frac{1}{2}  .\]
So the linear map $\phi:\cl{S}_n^d\to \cl{A}/J$ defined by
\[ \phi(\delta_0)=I+J, \quad \phi(\delta_i)=\frac{T^*_i}{2K}+J, \quad \phi(\delta_i^*)=\frac{T_i}{2K}+J \]
is unitally completely positive.

By the argument before the theorem, we know that $\cl{S}_n^d$  has the LP, so there exists a unitally completely positive map
$\hat{\phi}:\cl{S}_n^d\to \cl{A}$ such that $\pi\circ\hat{\phi}=\phi$, where $\pi$ denotes the canonical map from $\cl{A}$ onto $\cl{A}/J$.
Let $W^*_i=2K\hat{\phi}(\delta_i)$, we have that $W^*_i+J=T_i+J$. Moreover, by proposition....., we know that $(\frac{W_1}{2K},\dots,\frac{W_n}{2K})$ is a co-row
contraction. Hence, we have that
\[ w(W_1,\dots,W_n)\leq K  .\]
Now, to complete the proof, we need to show that $w(W_1,\dots,W_n)=K$. Suppose that
\[ w(\frac{W_1}{2K},\dots,\frac{W_n}{2K})<\frac{1}{2}.\]
Then there exists an $\epsilon>0$, such that
\[ w(\frac{(1+\epsilon)W_1}{2K},\dots,\frac{(1+\epsilon)W_n}{2K})<\frac{1}{2}.\]
However, this implies that
\[ I\otimes 1+\sum_{i=1}^nS_i\otimes \frac{(1+\epsilon)W^*_i}{2K}+\sum_{i=1}^nS^*_i\otimes \frac{(1+\epsilon)W_i}{2K}\geq 0, \]
in $\cl{S}_n\otimes_{\min}\cl{A}$. Since $\operatorname{id}\otimes \pi$ is completely positive, we further have that
\[ I\otimes 1+J+\sum_{i=1}^nS_i\otimes \frac{(1+\epsilon)T^*_i+J}{2K}+\sum_{i=1}^nS^*_i\otimes \frac{(1+\epsilon)T_i+J}{2K}\geq 0.\]
It now follows that
\[ w(T_1+J,\dots,T_n+J)\leq \frac{K}{1+\epsilon}, \]
which is a contradiction.
\end{proof}

\end{document}